\theoremstyle{plain} % default
\newtheorem{theorem}[subsection]{Theorem}
\newtheorem{proposition}[subsection]{Proposition}
\newtheorem{lemma}[subsection]{Lemma}
\newtheorem{corollary}[subsection]{Corollary}
\theoremstyle{definition}
\newtheorem{example}{Example}
\newtheorem{definition}[subsection]{Definition}
\theoremstyle{remark}
\newtheorem{remark}[subsection]{Remark}
\newtheorem*{remark*}{Remark}
\numberwithin{equation}{subsection}
\newcommand{\lsub}[2]{\tensor[_{#1}]{{#2}}{}}
\newcommand{\lrsub}[3]{\tensor[_{#1}]{{#2}}{_{#3}}}
\DeclareMathOperator{\Ob}{\mathrm{Ob}}
\DeclareMathOperator{\Mor}{\mathrm{Mor}}
\DeclareMathOperator{\cx}{\mathrm{cx}}
\DeclareMathOperator{\ex}{\mathrm{ex}}
\DeclareMathOperator{\cone}{\mathrm{cone}}
\newcommand{\dotcup}{{\,\dot\cup\,}}
\newcommand{\dslash}{/\negthinspace/}
\DeclareMathOperator{\PA}{\textup{\texttt{PA}}}
\DeclareMathOperator{\SGS}{\textup{\texttt{SGS}}}
\definecolor{Mcolor}{rgb}{0,0,1}
\definecolor{Wcolor}{rgb}{1,0,0}
\definecolor{Ocolor}{rgb}{0,0.5,0}
\definecolor{lightgray}{rgb}{0.6,0.6,0.6}
\begin{document}

\title[groupoids with root systems]{A characterization of simplicial oriented geometries as groupoids with root systems}

\author{Matthew Dyer}
\address{Department of Mathematics\\ 255 Hurley Building \\ University of Notre Dame \\Notre Dame \\ Indiana 46556 \\U.S.A.}
\email{dyer@nd.edu}
\author{Weijia Wang}
\address{School of Mathematics (Zhuhai)
\\ Sun Yat-sen University \\
Zhuhai, Guangdong, 519082 \\ China}
\email{wangweij5@mail.sysu.edu.cn}
%\thanks{}
%\keywords{}
%\subjclass{}

\begin{abstract}
This paper  shows that simplicial oriented geometries can be characterized as  groupoids with root systems having  certain favorable  properties, as   conjectured  by the first author. The proof  first translates  Handa's characterization of oriented matroids, as  acycloids which remain acycloids under iterated elementary contractions, into the language of groupoids with root systems, then  establishes favorable lattice theoretic properties of a generalization  of a construction which  Brink and Howlett used  in  their study of  normalizers of  parabolic subgroups of  Coxeter groups and uses Bj\"orner-Edelman-Ziegler's lattice theoretic characterization of simplicial oriented geometries amongst oriented geometries.
\end{abstract}

\maketitle

\section{introduction}

Groupoids with root systems have received  attention recently in   various  mathematical contexts. When studying the normalizer of a parabolic subgroup of a Coxeter group, Brink and Howlett \cite{bh}  considered    certain groupoids  which have presentations by generators and relations  resembling those of a Coxeter group
 (see \cite{Bour}, \cite{Hum} and \cite{bjornerbrenti} as standard references on Coxeter groups).  There are also  closely related  notions of Weyl groupoid and Coxeter groupoid developed by Cuntz, Heckenberger and Yamane (\cite{weylgroupoid}, \cite{weylgroupoid1}, \cite{weylgroupoid2}, \cite{weylgroupoid3}, \cite{weylgroupoid4}, \cite{weylgroupoid5}, \cite{weylgroupoid6}), which were initially   studied for their connections with certain Hopf algebras.

Motivated in part by longstanding conjectures (surveyed in \cite{DyW}) involving  a conjectural ortholattice completion of weak order of an (infinite) Coxeter group,  the first author defined in \cite{rootoid1} and \cite{rootoid2} the notions of protorootoid and rootoid by abstracting
lattice-theoretic  properties of weak order on Coxeter groups to a
  setting associated to  groupoids with root systems. This provides in
  particular a unified framework in which Weyl and Coxeter groupoids, and the groupoids of Brink and Howlett, can be studied, along with other mathematical structures including oriented matroids. One of the
   advantages of this framework is that various classes of rootoids are stable under natural categorical constructions, including formation of  categorical limits and functor rootoids. In particular,  the construction of
   Brink and Howlett  can be generalized and extended to the context of rootoids.

In this paper, we shall find it more convenient to work with the  concrete notion of signed groupoid set rather  than with  the  more abstract notion of  protorootoids (the relation between these two notions is closely analogous to that between Boolean algebras of sets and Boolean algebras). A signed groupoid set
 is a triple $(G,\Phi,\Phi^+)$ where $G$ is a groupoid and $\Phi=\{\lsub{a}{\Phi}\}$
 (called the root system) is a family of definitely involuted sets (i.e. sets with an involution map and a chosen ``positive'' part $\lsub{a}{\Phi}^+$) indexed by the objects of $G$ such that the groupoid acts on $\Phi$ with  action maps $\lrsub{a}{G}{b}\times \lsub{b}{\Phi}\rightarrow \lsub{a}{\Phi}$ where $\lrsub{a}{G}{b}:=\mathrm{Hom}_G
(b,a)$. Action by groupoid elements is required to commute with  involution maps but  is not required to preserve positive elements.  A Coxeter group, viewed as a  groupoid with one object,  and  with its standard root system is a typical example.

 Oriented matroids  are  combinatorial structures which abstract basic convexity  properties of sets of vectors in real vector spaces, or, from a dual point of view,  they abstract real, finite, essential hyperplane arrangements.  See \cite{OrMatBook}  as a general reference. By a simplicial oriented geometry, we mean a reduced simplicial oriented matroid without parallel elements; these abstract some  features of real,  finite, central,  simplicial hyperplane arrangements. The study of hyperplane arrangements and especially simplicial arrangements has been important  in  parts of algebra,  algebraic geometry, combinatorics, representation theory and topology over the last several decades; amongst fundamental  work in this area we mention \cite{del} and \cite{bes}.

 Any
 reduced oriented matroid can be naturally given the structure of a connected and simply connected signed groupoid set. The objects of  the  groupoid are in bijection with the hemispaces of the oriented matroid, which correspond to chambers of a hyperplane arrangement,  and the root system at a given object is precisely the original oriented matroid viewed as an involuted set  (this corresponds to the set of unit normal vectors to the hyperplanes, with involution given by multiplication by   $-1$). The positive roots at a given object are those elements in the hemispace represented by that object,
corresponding to roots with positive inner product with a vector in the corresponding open chamber. Each groupoid morphism acts trivially on the set of roots, but since the positive roots at its domain and codomain may differ, the morphism may change the signs of certain roots.

This work focuses on the problem of finding the properties that a  signed groupoid set $(G,\Phi,\Phi^+)$ needs to have so that it comes from a simplicial oriented geometry (i.e. a reduced simplicial oriented matroid
without parallel elements). We show that, as conjectured in \cite{rootoid2}, a simplicial oriented geometry can be characterized as a finite,
 connected, simply connected signed groupoid set which is    real, principal  and complete.  Here, ``finite'', ``connected'' and simply connected'' have standard meanings.  ``Real'' means every root has its sign changed by
some morphism. ``Principal'' means that the groupoid is generated by ``simple morphisms'' which each make a single positive root negative, and that the length of a groupoid morphism
with respect to the simple generators is equal to the cardinality of its inversion set (the set of positive roots made negative by the
element's inverse). Finally, ``complete'' means that the weak order at each object, given  by inclusion of inversion sets of morphisms with that object as codomain, is a complete lattice.  (These notions will be defined precisely in Section \ref{groupoidsetnotions}.)
  The fact that a simplicial oriented geometry gives rise to such a  signed groupoid set is essentially a reformulation of facts known from \cite{hyperplane}. The    key ingredients used to prove   the  other direction of the correspondence   are (1) Handa's characterization of oriented matroids  and  (2) a generalized Brink-Howlett construction described in this paper.  Handa's characterization of  oriented matroids  uses the notion of hemispaces and contraction.

Starting with a weaker notion called acycloid, one can perform contraction operations on it. Handa's theorem states that if after  all  sequences of elementary  contractions,  one still gets an acycloid, then the original acycloid is in fact an oriented matroid.
On the other hand when an acycloid is viewed as a signed groupoid set, the non-trivial  elementary contractions  (that is, those at non-loops)  closely  correspond to taking  a single, special  connected component of the signed groupoid set obtained by applying  the  generalized Brink-Howlett construction. Our main theorem in  Section   \ref{gbhc} asserts that if the original signed groupoid set is finite, connected, simply connected, preprincipal  and  complete, then   taking any component, not just a special one corresponding to an elementary contraction of the associated acycloid, produces another  signed groupoid set having exactly the same properties. Since both the original and constructed signed groupoid set  have the structure of an acycloid, Handa's theorem ensures that  the original signed groupoid set  comes from an oriented matroid, which can be seen to be simplicial on lattice-theoretic grounds (lattice theoretic arguments are also needed in establishing that Handa's result is applicable).

As a significant corollary, the simplicial  oriented  matroids (by which we mean those whose  associated simple oriented  matroid is a simplicial oriented geometry)   are preserved by a more general construction than contraction, which we call hypercontraction, corresponding to  taking an arbitrary component after applying the generalized Brink-Howlett construction. Hypercontraction is defined for arbitrary signed groupoid sets and is interesting for many other  classes of them  besides that of simplicial oriented matroids. We leave many natural questions about it open.

Several of the results we prove in Sections 2 and 3 of this paper are consequences of  more general ones in the theory of rootoids. For simplicity and brevity, we prove in this paper only a little more than needed to establish the connection  of these more general facts with oriented matroids, and   don't discuss in detail such  more general results in relation to the ones here.   We conclude the paper with some remarks   and open problems related directly to the content of this paper.
  More complete discussion of further developments   can be found in \cite{rootoid1}--\cite{rootoid2}  and their planned  sequels.

\section{Preliminaries}

\subsection{Oriented Matroids}

 There are many equivalent axioms for oriented matroids (see \cite{OrMatBook}).  We emphasize  their  characterization by closure operators  as given in  \cite{FL} and  \cite{largeconvex},
 using a formalization in terms of involuted sets instead of signed sets (see Remark  \ref{signedset}). Full details on oriented matroids can't be included here, and   the  reader unfamiliar with them may find it helpful  to look at Example \ref{coneexamp}  below for motivation while reading  our  discussion.

\subsection{} By an involuted set, we mean a pair $(E,*)$ where $E$ is a set and $*$ is an involution of $E$; that is, $*\colon E\to E$ is a function, denoted by  $x\mapsto x^*
$ for $x\in E$, satisfying $x^{**}=x$. We say that $E$ is \emph{strictly involuted} if the involution is fixed point free (that is,  $x^*\neq x$ for all $x\in E$).

Recall that  a \emph{closure operator} on a set $E$ is a function $c\colon \mathcal{P}(E)\to \mathcal{P}(E)$, (where $\mathcal{P}(E)$ is the power set of $E$) such that
(1) $ A\subseteq c(A)$ if $ A\subseteq E$, (2) $ c(A)\subseteq c(B)$ if $ A\subseteq B\subseteq E$ and  (3) $ c(c(A))=c(A)$ if $A\subseteq E$.
Subsets $F$ of $E$ which satisfy $C(F)=F$ are said to be \emph{$c$-closed} or  just closed.
 One easily checks that the intersection of a family of closed sets is closed.
We say $c$ is \emph{reduced} if $c(\emptyset)=\emptyset$.
Also,  $c$ is called  \emph{finitary} or \emph{of finite type} if
whenever $A\subseteq E$  and $x\in c(A)$ there exists a finite set $B\subseteq A$ such that $x\in c(B)$.  Given a (finitary) closure operator $c$ on $E$, for any disjoint  $A,B\subseteq E$, one has a
(finitary) closure operator $c_{A,B}$ on $B$ given by $c_{A,B}(X)=c(A\cup X)\cap B$, for $X\subseteq B$. We call  $c_{A,B}$ a \emph{contraction} of $c$ (by $A$) if $B=E\setminus A$, a \emph{restriction} of $c$ (to $B$) if $A=\emptyset$ and a \emph{minor} of $c$ in general.

\subsection{} An \emph{oriented matroid} is a system $(E,*,\cx)$  where $E$ is a set with a map $*: E\rightarrow E$ and a map $\cx: \mathcal{P}(E)\rightarrow \mathcal{P}(E)$  such that

(M1) $(E,*)$ is a strictly\footnote{   \label{f1}  A largely equivalent theory may be developed in which ``strictly'' is dropped from (M1) (see \cite{largeconvex}), but many   statements and definitions   then become more cumbersome (for example, in the definition of proper circuit, one then has to add a condition that $e\neq e^{*}$), while only a few become more natural (see footnote \ref{f3} to Example \ref{coneexamp}). Similarly, the definitions of (pre)acycloids and signed groupoid sets given later may  be modified to allow non-strictly involuted ground sets.}   involuted set,

(M2) $\cx$ is a closure operator  on $E$,

(M3)  $\cx$ is finitary,

(M4) $\cx(X)^*=\cx(X^*)$  for all $X\subseteq E$,

(M5) if $X\subseteq E$ and  $x\in \cx(X\cup \{x^*\})$, then $x\in \cx(X)$,

(M6)
if $X\subseteq E$ and  $x,y\in E$ with  $x\in \cx(X\cup \{y^*\})$ but $x\not\in \cx(X)$, then\footnote{The conclusion $y\in \cx((X\setminus\{y\})\cup\{x^*\})$ in (M6) is routinely  misstated in the  literature as $y\in \cx((X\cup\{x^*\})\setminus\{y\})$;  that version would imply  that $\cx(A)=E$ for all $A\subseteq E$.}  $y\in \cx((X\setminus\{y\})\cup\{x^*\})$.

 We remark at once that if $A$ and $B$ are disjoint subsets of $E$ satisfying $A=A^{*}$ and $B=B^{*}$, then
$(B,\dag,\cx_{A,B})$ is an oriented matroid, where $\dag$ is the restriction of $*$ to an involution on $B$ and $\cx_{A,B}$ is the minor of $\cx$ from $A$ and $B$.

When the closure operator and the involution map are understood, we usually denote an oriented matroid by $E$ for simplicity. We recall some facts   about an alternative description of oriented matroids using the concept of their  circuits (see for instance
\cite{FL} and  \cite{largeconvex}).  A \emph{circuit} of $(E,*,\cx)$ is a minimal nonempty   subset $X$ of $E$  such that $X^*\subseteq \cx(X)$. A circuit is called \emph{improper} if it is of the form $\{e,e^*\}$ for some $e\in E$. A circuit $C$ which is not improper is said to be \emph{proper}, and satisfies  $C\cap C^*=\emptyset$.  The set of circuits (together with $*$ map) determines the oriented matroid by requiring
   \begin{equation*}
\cx(X)=\{e\in E\mid \text{\rm there exists $U\subseteq X$  such that $ U\cup \{e^*\}$    is a circuit}\}.
\end{equation*} Oriented matroids may be axiomatized in terms of their circuits and involution $*$.

\subsection{}  We say a set $F\subseteq E$ is \emph{closed} if  $F$ is $\cx$-closed.   The elements of  the closed set $L:=\cx(\emptyset)=L^*$ are called \emph{loops} of $E$. We say $E$ is \emph{reduced} if  $\cx$ is reduced (that is, $E$ has no loops, or $L=\emptyset$). We
say $E$ is \emph{simple} if it is reduced and all singleton subsets of $E$ are closed.   An oriented matroid $(E,*,\cx)$ has an \emph{associated reduced oriented matroid} $(E_0 ,\dag,\cx_0)$ where $E_0:=E\setminus L\subseteq E$, $e^\dag:=e^*$ for $e\in E_0
$  and $\cx_0(X):=\cx(X)\setminus L$ for $X\subseteq E_0$. There is also a  simple oriented matroid $(E_1,\ddag,\cx_1)$ associated to $(E,*,\cx)$,  defined as follows.
There is an equivalence relation $\sim$ on $E_0$ such that $e\sim f\iff \cx_0(e)=\cx_{0}(f)$, for $e,f\in E_0$. Let $E_1:=E_0/\sim$, $\ddag([e]):=[e^\dag]$ for $e\in E_0 $, and $\cx_1(X)=\cx(\bigcup_{x\in X} x)/\sim$ for $X\subseteq E_1$, where $[e]$ denotes the $\sim$-equivalence class of $e\in E_0$.
(In general, for any equivalence relation $\equiv$ on a set $X$, we write
 $Y/\equiv$ for the set of $\equiv$-equivalence classes which are contained in $Y$, for any subset $Y$ of $X$ which is a union of $\equiv$-equivalence classes.)

Using the  relations between $E$ and $E_0$,  we extend some terminology used in \cite{largeconvex} from reduced oriented matroids   to general oriented matroids.
A closed set $F$ is called a \emph{sharp} if $F\cap F^*=L$.
A \emph{hemispace} of   $E$ is a closed set $H$ such that $E=H^*\cup H$ and $H\cap H^*=L$.    Hemispaces are the same as \emph{maximal sharps} (that is, inclusion maximal elements of the set of sharps) and  any sharp, such as $L$, is contained in a hemispace; see \cite[Theorem 7, Proposition 8]{largeconvex}.
We also define a \emph{tope} of $E$ to be a set of the form
$H\setminus L$ for some  hemispace $H$ of $E$.  (Therefore if $L=E$ there exists a unique tope, i.e. $\emptyset$.)
   Using the circuit axioms for  oriented matroids in \cite{largeconvex} (or in \cite{hyperplane}), it can be shown that  the circuits are the  inclusion-wise minimal elements  in $\mathcal{P}(E)$ which are contained in no tope (\cite[Theorem 1.1]{radon}), and the topes are  the inclusion-wise  maximal  elements in $\mathcal{P}(E)$   which contain no circuit.

\begin{lemma}\label{nosubcircuit}
Let $M=(E,*,\cx)$ be a reduced oriented matroid. Suppose that $C$ is a  circuit of $M.$ Let $x\in C$. Then $(C\backslash \{x\})\cup \{x^*\}$ contains no circuit.
\end{lemma}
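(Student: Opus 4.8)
The plan is to argue by contradiction: assuming $C':=(C\setminus\{x\})\cup\{x^*\}$ contains a circuit $D$, I will in every case produce a nonempty \emph{proper} subset $Y\subsetneq C$ satisfying the circuit closure condition $Y^*\subseteq\cx(Y)$, which contradicts the minimality of the circuit $C$. The whole argument should run purely on the closure axioms, chiefly the elimination axiom (M5) together with reducedness; I do not expect to need the circuit-elimination axiom or the tope characterization recalled just above, although the description of topes as maximal circuit-free sets would give an alternative, and less direct, route.

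First I would record the consequences of reducedness that make a proper subset of $C$ a legitimate witness against minimality. Since $M$ is reduced we have $\cx(\emptyset)=\emptyset$, so applying (M5) with $X=\emptyset$ shows $e\notin\cx(\{e^*\})$ for every $e\in E$. In particular no singleton is a circuit, whence $|C|\ge 2$ and $C\setminus\{x\}\neq\emptyset$.

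Now suppose $D\subseteq C'$ is a circuit. If $x^*\notin D$, then $D\subseteq C\setminus\{x\}$, so $D$ is a nonempty subset of $C$ not containing $x$, hence a proper subset, and it already satisfies $D^*\subseteq\cx(D)$; this contradicts the minimality of $C$. The substantive case is $x^*\in D$. Writing $D=D_0\cup\{x^*\}$ with $D_0:=D\setminus\{x^*\}\subseteq C\setminus\{x\}$, the defining property $D^*\subseteq\cx(D)$ gives $x=(x^*)^*\in\cx(D_0\cup\{x^*\})$. Here (M5) is exactly the tool for discarding the ``opposite'' element $x^*$: it yields $x\in\cx(D_0)$, and then monotonicity (M2) gives $x\in\cx(C\setminus\{x\})$.

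The final step converts $x\in\cx(C\setminus\{x\})$ into a contradiction with minimality. Since $x$ already lies in $\cx(C\setminus\{x\})$, we get $C\subseteq\cx(C\setminus\{x\})$, and idempotence together with monotonicity of the closure forces $\cx(C)=\cx(C\setminus\{x\})$. Therefore $(C\setminus\{x\})^*\subseteq C^*\subseteq\cx(C)=\cx(C\setminus\{x\})$, so the nonempty proper subset $C\setminus\{x\}$ of $C$ satisfies the circuit closure condition, contradicting the minimality of $C$ and completing the argument. I expect the only real obstacle to be the bookkeeping in the case $x^*\in D$, specifically recognizing that (M5) is precisely what lets one strip off $x^*$ so as to land back inside $\cx(C\setminus\{x\})$; the remaining manipulations are routine closure-operator facts, and the argument should apply uniformly whether $C$ is proper or improper.
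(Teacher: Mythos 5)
Your proof is correct, but it takes a genuinely different route from the paper's. The paper argues on the circuit side: it invokes the circuit axioms of oriented matroids as cited from Bj\"orner--Edelman--Ziegler, using the incomparability axiom (C1) to rule out a circuit inside $C\setminus\{x\}$, and the circuit-elimination axiom (C3) applied at the pair $x,x^{*}$ to produce a circuit $Z\subseteq C\setminus\{x\}$, which again contradicts (C1). You instead work entirely on the closure side, using only the axioms stated in the paper and the definition of a circuit as a minimal nonempty set $Y$ with $Y^{*}\subseteq \cx(Y)$: your case $x^{*}\notin D$ replaces (C1) by a direct appeal to minimality of $C$, and your case $x^{*}\in D$ replaces elimination by the axiom (M5), which strips off $x^{*}$ to yield $x\in\cx(C\setminus\{x\})$, whence $\cx(C\setminus\{x\})=\cx(C)$ and the nonempty proper subset $C\setminus\{x\}$ itself satisfies the circuit closure condition, violating minimality. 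What each approach buys: the paper's argument is shorter once one accepts the circuit axiomatization, but that axiomatization and its equivalence with the closure axioms is nontrivial background that is only cited; your argument is self-contained within the paper's chosen framework of closure operators, needing only reducedness (which both proofs use, you additionally to exclude singleton circuits) and routine closure-operator manipulations. All your individual steps check out, including the uniform treatment of proper and improper circuits.
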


\begin{proof}
Note that since $M$ is reduced, $C\backslash \{x\}$ is not empty.
Suppose to the contrary, $C'$ is a circuit contained in $(C\backslash \{x\})\cup \{x^*\}$. By the circuit axioms of an oriented matroid (see \cite[Section 6 axiom (C1)]{hyperplane}), a circuit cannot be properly contained in another circuit. So $C'$ cannot be contained in $C\backslash \{x\}$. Therefore $C'=D\dotcup \{x^*\}$ where $D\subseteq C\backslash \{x\}$.  But the circuit axioms of an oriented matroid ensures that there exists a circuit $Z$ contained in $(C\backslash \{x\})\cup D=C\backslash \{x\}$  (see \cite[Section 6 axiom (C3)]{hyperplane}). This is a contradiction.
\end{proof}

We record the following explicit  description of  oriented matroid  closure operators  in terms of  hemispaces.
\begin{theorem}\label{closure}
Let $M=(E,*,\cx)$ be an oriented matroid with the set $\mathfrak{H}$ of hemispaces. (Note $\mathfrak{H}=\{T\cup L\mid T\in \mathfrak{T}\}$ where $\mathfrak{T}$ is the set of topes of $M$ and $L=\cx(\emptyset)$ is the set of loops.)  For $H\in \mathfrak{H}$,  let $\cx_{H}$ denote the restriction of $\cx$ to a closure operator on $H$. That is, for $X\subseteq H$, $\cx_{H}(X)= \cx(X)\subseteq H$.

(a) For $H\in \mathfrak{H}$ and $X\subseteq H$, we have
$\cx_{H}(X)=\bigcap_{\substack{K\in \mathfrak{H}\\K\supseteq  X}}K$.

(b) The closure operator $\cx$  is given by
$\cx(X)=\bigcup_{H\in \mathfrak{H}}\cx_{H}(X\cap H)$.
\end{theorem}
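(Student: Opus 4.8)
The plan is to prove both inclusions in each part, reducing the real content of (a) to a separation property of hemispaces and that of (b) to a circuit-extension argument. Two inclusions are immediate: since every hemispace $K$ is $\cx$-closed, $X\subseteq K$ forces $\cx(X)\subseteq\cx(K)=K$, so $\cx(X)\subseteq\bigcap_{K\in\mathfrak{H},\,K\supseteq X}K$ (a nonempty intersection, as $H$ itself occurs), giving the forward inclusion in (a); and $\cx_H(X\cap H)=\cx(X\cap H)\subseteq\cx(X)$ for each $H$ gives the backward inclusion in (b). Before attacking the reverse inclusions I would record that when $X\subseteq H$ the set $S:=\cx(X)$ is a \emph{sharp}: indeed $S\subseteq H$ and $S^{*}\subseteq H^{*}$, so $S\cap S^{*}\subseteq H\cap H^{*}=L$, while $L=\cx(\emptyset)\subseteq S$ forces $S\cap S^{*}=L$. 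I would also note that the hemispaces containing $X$ are exactly those containing $S=\cx(X)$, so the intersection in (a) equals $\bigcap_{K\supseteq S}K$.

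For the reverse inclusion in (a) it then suffices to prove the separation statement: if $S$ is a sharp and $e\notin S$, there is a hemispace $K\supseteq S$ with $e\notin K$. If $e^{*}\in S$, then any hemispace $K\supseteq S$ (one exists by \cite[Theorem 7, Proposition 8]{largeconvex}) contains $e^{*}$, so $e\in K^{*}$; were $e\in K$ we would get $e\in K\cap K^{*}=L\subseteq S$, contradicting $e\notin S$, so $e\notin K$. If instead $e^{*}\notin S$, I would set $T:=\cx(S\cup\{e^{*}\})$ and show $T$ is again a sharp; granting this, $T$ lies in a hemispace $K$, which contains $S$ and $e^{*}$ but omits $e$ by the sharp property of $K$ exactly as above. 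The heart of the matter is thus that $T$ is a sharp. Suppose $f,f^{*}\in T$; I want $f\in L$. If $f,f^{*}\in S$ then $f\in S\cap S^{*}=L$. Otherwise, applying the exchange axiom (M6) to $f\in\cx(S\cup\{e^{*}\})\setminus\cx(S)$ (resp.\ to $f^{*}$) together with $e\notin S$ yields $e\in\cx(S\cup\{f^{*}\})$ (resp.\ $e\in\cx(S\cup\{f\})$); in each mixed case one of these, combined with $f^{*}\in S$ or $f\in S$, gives $e\in\cx(S)=S$, a contradiction, while in the remaining case both relations hold with $f,f^{*}\notin S$. The decisive input is then a convexity lemma: for $S$ closed with $e,e^{*}\notin S$ one has $\cx(S\cup\{f\})\cap\cx(S\cup\{f^{*}\})\subseteq S$. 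I would prove this by strong circuit elimination: representing $e\in\cx(S\cup\{f\})$ and $e\in\cx(S\cup\{f^{*}\})$ by circuits $C_{1}=A_{0}\cup\{f,e^{*}\}$ and $C_{2}=B_{0}\cup\{f^{*},e^{*}\}$ with $A_{0},B_{0}\subseteq S$ (here $f$, resp.\ $f^{*}$, must occur because $e\notin\cx(S)$), eliminating the opposite pair $f\in C_{1}$, $f^{*}\in C_{2}$ while retaining the common element $e^{*}$ produces a circuit $C_{3}\ni e^{*}$ with $C_{3}\subseteq A_{0}\cup B_{0}\cup\{e^{*}\}\subseteq S\cup\{e^{*}\}$, whence $e\in\cx(C_{3}\setminus\{e^{*}\})\subseteq\cx(S)=S$, a contradiction; the degenerate cases $e\in\{f,f^{*}\}$ follow directly from (M5). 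This rules out $f,f^{*}\notin S$, so $T$ is a sharp, completing (a).

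For the reverse inclusion in (b), take $e\in\cx(X)$. By the circuit description of $\cx$ there is $U\subseteq X$ with $C:=U\cup\{e^{*}\}$ a circuit. I would show $U$ lies in a hemispace: by Lemma \ref{nosubcircuit} the set $(C\setminus\{e^{*}\})\cup\{e\}=U\cup\{e\}$ contains no circuit, and since topes are the maximal circuit-free subsets, $U\cup\{e\}$ extends to a tope $T$, so $U\subseteq T\subseteq H:=T\cup L\in\mathfrak{H}$. Then $U\subseteq X\cap H$ and $e\in\cx(U)\subseteq\cx(X\cap H)=\cx_{H}(X\cap H)$, placing $e$ in the right-hand union. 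As Lemma \ref{nosubcircuit} is stated for reduced oriented matroids, I would first pass to the associated reduced matroid $(E_{0},\dag,\cx_{0})$: loops lie in $L=\cx(\emptyset)$, hence in both $\cx_{H}(X\cap H)$ and $\cx(X)$, so they are harmless on each side, and the hemispaces of $E$ are exactly the sets $H_{0}\cup L$ for $H_{0}$ a hemispace of $E_{0}$, reducing (b) for $E$ to (b) for $E_{0}$.

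The main obstacle is the sharpness of $T=\cx(S\cup\{e^{*}\})$ in part (a)—equivalently the convexity lemma—since this is the one place where the oriented-matroid elimination axioms, rather than the bare closure-operator formalism, are essential; the rest is bookkeeping with the definitions of sharp, tope and hemispace. I expect the delicate point to be invoking strong circuit elimination in the involuted-set formulation so as to keep the distinguished element $e^{*}$ in the eliminated circuit $C_{3}$. If a suitable separation or intersection statement for sharps is already available in \cite{largeconvex}, this step could instead be cited directly.
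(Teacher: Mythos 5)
Your proposal is correct, and your part (b) is essentially identical to the paper's own argument: take a circuit $X'\cup\{x^{*}\}$ with $X'\subseteq X$, apply Lemma \ref{nosubcircuit} to see that $X'\cup\{x\}$ is circuit-free, extend it to a tope, and conclude. Part (a), however, follows a genuinely different route. The paper's proof of (a) is short and direct: after reducing to the reduced case, if $x$ lies in every hemispace containing $X$ but $x\notin\cx(X)$, then no subset of $X\cup\{x^{*}\}$ is a circuit (subsets of $X$ are circuit-free since $X$ sits inside a tope, and circuits of the form $V\cup\{x^{*}\}$ are excluded by the circuit description of $\cx$), so $X\cup\{x^{*}\}$ extends to a hemispace $K$; but then $K\supseteq X$ forces $x\in K$ as well as $x^{*}\in K$, contradicting $K\cap K^{*}=\emptyset$. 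You instead reduce (a) to a separation theorem for sharps --- every closed sharp is an intersection of hemispaces --- which you prove using (M5), (M6) and the convexity statement $\cx(S\cup\{f\})\cap\cx(S\cup\{f^{*}\})\subseteq S$, established via strong circuit elimination. This is heavier machinery: strong elimination is not among the paper's working axioms and must be imported (it is a standard consequence of the circuit axioms, see \cite{OrMatBook}); alternatively, your convexity lemma is precisely \cite[Proposition 2]{largeconvex}, which the paper itself invokes in the proof of Lemma \ref{matroidisacycloid}, so you could cite it directly, as you anticipate. What your route buys is a stronger intermediate result: since the sets $\cx(X)$ with $X$ contained in a hemispace are exactly the closed sharps, your separation statement extends (a) to arbitrary sharps, in the spirit of \cite[Theorem 7]{largeconvex}. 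What the paper's route buys is economy: it uses nothing beyond the circuit description of the closure operator and the characterization of topes as maximal circuit-free sets, both of which are set up before the theorem.
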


\begin{proof}  Let $L:=\cx(\emptyset)$ be the set of loops.
Note  $L$ is contained in each closed set, including each hemispace. By considering the associated reduced oriented matroid,  one easily reduces to the case $L=\emptyset$.  So henceforth we assume $M$ is reduced.

(a)
Let $X\subseteq H\in \mathfrak{H}$. Then
$\cx_{H}(X)=cx(X)\subseteq \bigcap_{K\in \mathfrak{T},K\supseteq  X}K$ since the right hand side is closed  in $E$  and contains $X$.
 For the reverse inclusion, let $X\subseteq H$ and $ x\in E$. Assume that $X$ and $x$ satisfy the property that  for any hemispace $K$, if  $K\supseteq  X$ then $x$ must also be in $K$. Then we claim that there exists some $V\subseteq X$ such that  $V\cup \{x^*\}$ is a circuit. Otherwise no subset of $X\cup \{x^*\}$ is a circuit. So $X\cup \{x^*\}$ must be contained in some hemispace $K$   by the discussion of circuits and hemispaces above. But $x\in K$ also, which contradicts that $K\cap K^*=\emptyset$. Hence $x\in \cx(X)=\cx_{H}(X)$, and (a) is proved.

(b)  Let $X\subseteq E$ be arbitrary. Then clearly
$\bigcup_{H\in \mathfrak{H}}\cx_{H}(X\cap H)\subseteq \cx(X)$.
  Now take $x\in \cx(X)$. Then there exists  $X'\subseteq X$ such that $X'\cup \{x^*\}$ is a circuit. This implies that $X'\cup \{x\}$ is contained in a hemispace $H$ by Lemma \ref{nosubcircuit} and the discussion of circuits and hemispaces above.  So $x\in \cx(X')\subseteq \cx(X\cap H)=\cx_{H}(X\cap H)$,  so we are done.
\end{proof}

\subsection{}\label{convgeom}  An \emph{anti-exchange} (or \emph{convex}) closure operator on a set $U$ is a closure operator $c$ on $U$ such that   for $p,q\in U$ and  $X\subseteq U$, if $q\in c(X\cup\{p\})$ but $q\not\in  c(X)$, then $p\not\in c (X\cup\{q\})$.  For any disjoint $A,B\subseteq U$, the minor $c_{A,B}$ is then  an anti-exchange closure on $B$.
Let us say that a subset $P$ of a poset $Q$ is \emph{saturated} in $Q$ if every maximal chain of $P$ (regarded as subposet of $Q$) is a maximal chain of $Q$.
If $Q$ is the Boolean poset of all subsets of a set $U$, then
 the maximal chains of $Q$ are in natural bijection with total orders of $U$, by a map sending each total order to its set of downsets.    In that case, for any saturated subposet $P$ of $Q$, the map $X\mapsto \bigcap_{Y\in P,Y\supseteq  X}Y$ is an anti-exchange closure operator on $U$.
   On the other hand,  it can  be shown that if $c$ is a reduced,  finitary anti-exchange closure on $U$, then the set $P$ of $c$-closed subsets of $U$ is saturated in the power set $Q$ of  $U$ and the anti-exchange closure on $U$ from $P$ is just $c$.

      Suppose   that   $M$ is a simple  oriented matroid and that $H\in \mathfrak{H}$ is a hemispace.  It can be shown that  the poset $P=\{H\cap K\mid K\in \mathfrak{H}\}$ is saturated in the power set $Q$ of $H$. By Theorem \ref{closure},   the associated
  anti-exchange     closure operator on $H$ is
           $\cx_{H}$ (which is also  finitary).    All these facts are well known for finite $E$ (see \cite{EdJam},  \cite{Ed} and \cite{hyperplane}).

\subsection{}  Recall (see for example \cite{MatBook}) that a (finitary, possibly infinite, unoriented) \emph{matroid}  is a pair $(F,c)$ where $F$ is a set and $c$ is a finitary closure operator on $F$ such that  the following \emph{exchange condition} holds: if $X\subseteq F$ and  $x,y\in F$ satisfy
$x\in c(X\cup\{y\})\setminus c(X)$, then $y\in c(X\cup\{x\})$.
For any disjoint sets $A,B\subseteq F$, the minor $(B,c_{A,B})$ is also a matroid .The \emph{rank}
of $(F,c)$ is  the (well-defined) cardinality of any  subset $B$ of $F$ which is inclusion minimal subject to $c(B)=F$ (such a set is called a \emph{basis} of $F$).

 For each oriented matroid $(E,*,\cx)$ one can associate a  matroid $(E,\overline{\cx})$  to it where $\overline{\cx}(X)=\cx(X\cup X^{*})$ for $X\subseteq E$.

\subsection{}\label{extreme} In this subsection,  we  assume the oriented matroid is finite, i.e. $\vert E\vert <\infty$.    An \emph{oriented geometry} is a finite, simple oriented matroid.
  Let $(E,*,\cx)$ be an oriented geometry and $H$ be a hemispace.  There exists a unique minimal subset of $H$, called the set of \emph{extreme elements} of $H$, and denoted  by  $\ex (H)$, such that $\cx(\ex (H))=H$ (see \cite[Section 6]{hyperplane}; it follows from the fact $\cx_{H}$ is anti-exchange). If $\vert \ex (H)\vert $ is equal to the rank of  the  unoriented matroid  $(E,\overline{\cx})$, then $H$ is called \emph{simplicial}. If all hemispaces are simplicial then we call the oriented geometry a \emph{simplicial oriented geometry}.
 By a \emph{simplicial oriented matroid}, we mean an oriented matroid for which the associated  simple oriented matroid  is  a simplicial oriented geometry.

\begin{example}\label{coneexamp} Let $V$ be a real vector space. For any $A\subseteq V$, define \begin{equation*}
\cone (A):=\{\sum_{i\in I}k_iv_i\mid v_i\in A, k_i\in \mathbb{R}_{\geq 0}, \vert I\vert <\infty\}
\end{equation*} where by convention the empty sum has value $0\in V$. Thus, $\cone(A)$ is the pointed convex cone in $V$ spanned by $A$. Say that a subset $A$ of $V$ is positively independent if
$\sum_{i\in I}k_iv_i=0$ with $v_i\in A$, all $k_i\in \mathbb{R}_{\geq 0}$ and  $\vert I\vert <\infty$ implies $k_{i}=0$ for all $i\in I$.

For any subset $E=-E$ of $V\setminus \{0\}$, there is
a reduced oriented matroid\footnote{\label{f3} If we had not required $E$ to be strictly involuted, we could have allowed more generally $E=-E\subseteq V$ and all such oriented matroids would be minors of the one with $E=V$} $M_{E}:=(E,*,\cx_{E})$ with $\cx_{E}(A):=\cone(A)\cap E$ and $x^{*}=-x$ for $x\in E$.  (Examples of non-reduced oriented matroids arise as minors of these.)
The  oriented  matroid $M_{E}$ is simple if and only if for  $\alpha\in E$ and $0\neq c\in \mathbb{R}$, one has $c\alpha\in E$ if and only if $c=\pm 1$.

 For $X\subseteq E$, we have $\overline{cx}(X)=\mathrm{span}(X)\cap E$, where  $\mathrm{span}$ denotes linear span. The hemispaces  of $M_{V\setminus\{0\}} $ are the sets of  positive elements of vector space total orderings of $V$, and the hemispaces of $M_{E}$ are the intersections of such hemispaces with $E$. The circuits of $M_{E}$ are the minimal non-empty subsets of $E$ which are not  positively independent. Suppose for example that
the hemispace $H$ of $M_{E}$ is contained in some affine subspace $U$ (that is, a translate of a codimension one linear  subspace of $V$) such that $0\not \in U$.  Then the convex closure operator $\cx_{H}$ from $M_{E}$ is given by
$\cx_{H}(X)=\mathrm{conv}(X)\cap H$ for $X\subseteq H$, where $\mathrm{conv}(X)$ denotes the convex hull of a subset of  $U$.

 Now suppose that $M_{E}$ is an oriented geometry. Let $V_{0}=\mathrm{span}(E)$. This is a finite-dimensional real vector
 space. Choose a positive definite inner product $(-\mid -)\colon V_{0}\times V_{0}\to \mathbb{R}$ on $V_{0}$. Consider  the set $A$ of linear hyperplanes of $V_{0}$ orthogonal to the
  elements of $E$. Then $A$ is a  real,  finite, essential, linear hyperplane arrangement in $V_{0}$ (essential means that $\bigcap_{H\in A}H=\{0\}$).  The connected components
  $V\setminus \bigcap_{H\in A}H$ are
  called (open) chambers. Every chamber $C$ determines a
  hemispace $H$ of $E$, consisting of all $\alpha\in E$ which have  positive inner product with some point (or equivalently,  all points) of $C$, and every hemispace so arises.   In fact, consider a hemispace $H$. It is positively independent and spans $V_{0}$, and $\ex(H)$ is a set of representatives of the extreme rays  of  the polyhedral cone $K=\cone(H)$ spanned by $H$.   Then the dual cone $K^{\vee}:=\{\alpha\in V_{0}\mid (\alpha\mid K)\subseteq \mathbb{R}_{\geq 0}\}$ also spans $V_{0}$ and its interior is a chamber yielding the hemispace $H$. Moreover,   the facets
  (codimension one faces) of  $K^{\vee}$ are in bijection with the elements of $\ex(H)$, so $K^{\vee}$ is simplicial if and only if
  $\vert\ex(H)\vert =\dim(V_{0})$, which is the rank of $E$.
  Thus, the hyperplane arrangement is simplicial (that is, all its chambers are open simplicial cones) if and only if $E$ is a simplicial oriented geometry.

  Oriented matroids and  geometries arise in many other, quite different ways, and those  arising as above will be said in this paper  to be realizable.  Not every oriented geometry is realizable; see \cite{OrMatBook}
  for more precise discussion of realizability and  similar models (for instance, pseudosphere arrangements) which can be used to describe finite oriented matroids in  general.
  \end{example}

\begin{remark}\label{signedset}  By definition, a signed subset (or signed  vector)  of a set $E'$ is a function  $f\colon E'\to \{+,0,-\}$. Associated to $E'$, one has a set $E:=E'\times \{\pm\}$
with a fixed point free involution $*$ defined by $(e,\pm)^*=(e,\mp)$ for $e\in E'$. Then the signed subsets of $E'$ correspond bijectively to the subsets $X$ of $E$ satisfying $X\cap X^*=\emptyset$, by a standard  correspondence which attaches to a signed subset $f$ of $E'$ the subset $\{(e,f(e))\in E\mid  e\in E', f(e)\neq 0\}$.  In much of the  literature,  oriented matroids, acycloids etc  are  defined by specifying  certain collections of  signed subsets of a set $E'$ (circuits, vectors, covectors, topes etc).  In this paper, we always  regard the sets of  circuits, vectors, covectors, topes etc  as  subsets of  $\{X\subseteq E \mid X\cap X^*=\emptyset\}$ for a strictly involuted set $E$,  using the above standard correspondence.  In particular, the proper circuits and topes of  an oriented matroid $E$ in the framework of strictly involuted sets as considered above correspond to the circuits and  topes, respectively, for oriented matroids in the framework of signed sets. \end{remark}

  We state  the following result more generally than needed in this paper, for  application elsewhere.

\begin{proposition}\label{hemiequiv} Let $M=(E,*,\cx)$ be an oriented matroid,  and $F$ be a subset of $E$ with $F=F^{*}$.   Let $M_{F}$ denote the restriction of $M$ to $F$.  Let  $L:=\cx(\emptyset)$ be the set of loops of $M$, so $L\cap F$ is  the set of loops of $M_{F}$. Then the hemispaces of  $M_{F}$ are  the intersections of $F$ with the hemispaces of $M$.  More precisely,  the following conditions are equivalent for $A\subseteq F$.

(i) $A$ is a hemispace of $F$ (that is, $F=A\cup A^{*}$, $ A\cap A^{*}= L\cap F$ and $\cx(A)\cap F=A$).

(ii) $F=A\cup A^{*}$,
$ L\cap F\subseteq A$ and there is no circuit of  $E$ contained in $A\setminus L$.

(iii) $A=H\cap  F$ for some hemispace  $H$ of $E$.

(iv) $A\cup A^{*}=F$, $\cx(A)$ is a sharp of $E$ and  $L\cap F\subseteq A$.
\end{proposition}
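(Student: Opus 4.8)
The plan is to prove the four conditions equivalent; since the asserted description of the hemispaces of $M_F$ as the sets $H\cap F$ (for $H$ a hemispace of $M$) is precisely the equivalence of (i) and (iii), everything follows from that. First I would record the tools. The restriction closure is $\cx_F(X)=\cx(X)\cap F$, so $M_F$ has loop set $\cx_F(\emptyset)=L\cap F$ as stated. A hemispace $H$ of $M$ satisfies $\cx(H)=H$, $H\cup H^{*}=E$ and $H\cap H^{*}=L$; in particular $L\subseteq H$, and $x,x^{*}\in H$ forces $x\in L$. I would also use the two existence facts recalled above: any subset of $E$ containing no circuit lies in a tope, and any sharp lies in a hemispace (\cite[Theorem 7, Proposition 8]{largeconvex}). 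I would then establish the equivalences via the cycle (iii)$\Rightarrow$(i)$\Rightarrow$(ii)$\Rightarrow$(iii) together with (iii)$\Leftrightarrow$(iv).

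For the implications out of (iii), write $A=H\cap F$ with $H$ a hemispace of $M$. Using $F=F^{*}$ and $H\cup H^{*}=E$ gives $A\cup A^{*}=(H\cup H^{*})\cap F=F$ and $A\cap A^{*}=(H\cap H^{*})\cap F=L\cap F$; moreover $A\subseteq H$ yields $\cx(A)\subseteq \cx(H)=H$, whence $\cx(A)\cap F\subseteq H\cap F=A$, so $A$ is $\cx_F$-closed. This proves (i). The same containment $\cx(A)\subseteq H$ gives $\cx(A)\cap \cx(A)^{*}\subseteq H\cap H^{*}=L$, while $L=L^{*}\subseteq \cx(A)$ (loops lie in every closed set) gives the reverse inclusion, so $\cx(A)$ is a sharp; together with $L\cap F\subseteq H\cap F=A$ this proves (iv). Thus (iii) implies both (i) and (iv).

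For (i)$\Rightarrow$(ii), only the absence of a circuit in $A\setminus L$ needs work. If $C$ were a circuit of $M$ with $C\subseteq A\setminus L$, then from $C^{*}\subseteq \cx(C)\subseteq \cx(A)$ and $C^{*}\subseteq F$ (as $C\subseteq A\subseteq F=F^{*}$) we get $C^{*}\subseteq \cx(A)\cap F=A$; choosing any $x\in C$ then gives $x,x^{*}\in A$, so $x\in A\cap A^{*}=L\cap F\subseteq L$, contradicting $x\notin L$. Applying this improper-circuit argument to $\{x,x^{*}\}$ also shows, conversely, that (ii) forces $A\cap A^{*}\subseteq L$, hence $A\cap A^{*}=L\cap F$.

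The converse constructions (ii)$\Rightarrow$(iii) and (iv)$\Rightarrow$(iii) are the crux, and share a uniform second half. In case (ii) the hypothesis that $A\setminus L$ contains no circuit lets me enlarge it to a tope $T$ and set $H:=T\cup L$, a hemispace with $A=(A\setminus L)\cup(A\cap L)\subseteq T\cup L=H$; in case (iv) the sharp $\cx(A)$ lies in a hemispace $H$, and $A\subseteq \cx(A)\subseteq H$. In both cases $A\subseteq H\cap F$. For the reverse inclusion take $x\in H\cap F$ and suppose $x\notin A$; then $x\in A^{*}$ by $F=A\cup A^{*}$, so $x^{*}\in A\subseteq H$, giving $x\in H\cap H^{*}=L$ and hence $x\in L\cap F\subseteq A$, a contradiction. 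Thus $H\cap F=A$, proving (iii). I expect the main obstacle to be organizational rather than deep: the combinatorial content reduces to the two existence statements, which for infinite $E$ rest on Zorn's lemma applied to the finite-character property supplied by the finitary axiom (M3) (circuits are finite), together with the cited results of \cite{largeconvex}. The remaining delicate point is careful bookkeeping with loops, since $M$ is not assumed reduced: one must use $L=L^{*}$, $L\subseteq$ every closed set, and the implication ``$x,x^{*}\in H\Rightarrow x\in L$'' consistently, and pass between $A$ and $A\setminus L$ only where needed.
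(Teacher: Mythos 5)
Your proposal is correct, and its two substantive arguments coincide with the paper's: the circuit argument (a circuit $C\subseteq A\setminus L$ would satisfy $C^{*}\subseteq \cx(A)\cap F=A$, forcing its elements into $A\cap A^{*}=L\cap F$) and the tope-extension argument for (ii)$\Rightarrow$(iii), both resting on the same facts quoted from \cite{largeconvex}. The genuine difference is the treatment of (iv). The paper runs the single cycle (i)$\Rightarrow$(ii)$\Rightarrow$(iii)$\Rightarrow$(iv)$\Rightarrow$(i), and its last leg is purely computational: from (iv) one gets $L\cap F\subseteq A\cap A^{*}\subseteq \cx(A)\cap A^{*}\subseteq \cx(A)\cap \cx(A)^{*}\cap F=L\cap F$, whence $\cx(A)\cap F=\cx(A)\cap(A\cup A^{*})=A$; sharpness of $\cx(A)$ is exploited only through this identity, with no appeal to the existence of a hemispace containing a given sharp. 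You instead prove (iii)$\Rightarrow$(i)$\Rightarrow$(ii)$\Rightarrow$(iii) together with (iii)$\Leftrightarrow$(iv), and your leg (iv)$\Rightarrow$(iii) does invoke that existence fact (\cite[Theorem 7, Proposition 8]{largeconvex}) to embed the sharp $\cx(A)$ in a hemispace, after which a single element chase ($x\in H\cap F$ with $x\notin A$ gives $x\in A^{*}$, so $x^{*}\in A\subseteq H$, so $x\in H\cap H^{*}=L$, so $x\in L\cap F\subseteq A$, a contradiction) finishes both (ii)$\Rightarrow$(iii) and (iv)$\Rightarrow$(iii) at once. Your arrangement buys uniformity in the two converse constructions; the paper's buys a marginally more self-contained proof, needing only the tope-extension existence result. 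Your side remarks (that (ii) forces $A\cap A^{*}=L\cap F$ via improper circuits, and the Zorn's-lemma point for infinite $E$) are accurate and consistent with how the paper itself uses the cited results.
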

\begin{proof} By definition, $M_{F}:=(F,\dag, d)$  where $d$ is the restriction $d=c_{\emptyset, F}$  of $c$ (so $d(X)=c(X)\cap F$ for $X\subseteq F$),  and $x^{\dag }=x^{*}$ for $x\in F$).
Thus, hemispaces of $M_{F}$ are as described in (i). We note for the proof that $(L\cap F)^{*}=L^{*}\cap F^{*}=L\cap F$.

 We show that (i) implies (ii).  Assume that (i)  holds.
Suppose, contrary to (ii), that $\{a_{1},\ldots, a_{n}\}$ is a circuit  of $E$ which is contained in $A\setminus L$. Then
$a_{n}^{*}\in \cx(\{a_{1},\ldots, a_{n-1}\})\cap A^{*}\subseteq \cx(A)\cap F=A$. Hence $a_{n}\in A\cap A^{*}=L\cap F$, a contradiction.

Now we show (ii) implies (iii).  Assume (ii) holds. Since $A\setminus L$
contains no circuit  of $E$, it is contained in some tope $T$, and hence $A$  is contained in some hemispace $H=T\cup L$ of $E$.
Then $L\cap F\subseteq A\subseteq H\cap F$. We have
\begin{equation*}
H\cap A^{*}=(H^{*}\cap A)^{*}=(H^{*}\cap H\cap A)^{*}\subseteq(L\cap F)^{*}=L\cap F\subseteq A
\end{equation*}
 and hence
\begin{equation*}A\subseteq  H\cap F=H\cap (A\cup A^{*})=
(H\cap A)\cup (H\cap A^{*})\subseteq A,\end{equation*}
so $A=H\cap F$ as required.

To show (iii) implies (iv), assume that $A=H\cap F$ where
$H$ is a hemispace of $E$. Then $A^{*}=H^{*}\cap F^{*}=H^{*}\cap F$ so
\begin{equation*}A\cup A^{*}=(H\cap F)\cup (H^{*}\cap F)=(H\cup H^{*})\cap F=E\cap F=F
\end{equation*} and $A\supseteq A\cap A^{*}=(H\cap F)\cap (H^{*}\cap F)=L\cap F$.
 Suppose that $x\in (\cx(A)\cap \cx(A)^*)\setminus L$. Since $A^*=H^*\cap F$ and $\cx(A)^*=\cx(A^*)$, one has $x\in (H\cap H^*)\setminus L.$ This contradicts the fact that $H$ is a sharp. Hence $\cx(A)$ is a sharp of $E$.

Finally,  assume that (iv) holds. We show (i) follows. Since $L\cap F=(L\cap F)^{*}\subseteq A$, we have  \begin{equation*}
L\cap F\subseteq A\cap A^{*}\subseteq \cx(A)\cap A^{*}\subseteq \cx(A)\cap \cx(A)^{*}\cap F=L\cap F.
\end{equation*} Hence   $A\cap A^{*}=cx(A)\cap A^{*}=L\cap F$ and so
 \begin{equation*}
\cx(A)\cap F=\cx(A)\cap (A\cup A^{*})=(\cx(A)\cap A)\cup (\cx(A)\cap A^{*})=A\cup (A\cap A^*)= A,
\end{equation*}
completing the proof.
\end{proof}

We conclude our discussion of oriented matroids with the following technical fact, to  be used  in discussing realizations of signed groupoid sets in Section 4.
\begin{lemma}\label{firstcondition}
Let $(E,*,\cx)$ and $(F,-,c)$ be oriented matroids and $f\colon E\to F$ be an injective map  such that  $f(\alpha^{*})=-f(\alpha)$ for any  $\alpha\in E$  and $f(\cx(\emptyset))=f(E)\cap c(\emptyset)$ (note this latter condition holds if $E$ and $F$ are both reduced). Suppose that for all $A\subseteq E$ such that $A\cup A^{*}=E$ and $A\cap A^{*}=\cx(\emptyset)$, one has $c(f(A))\cap f(E)=f(A)$  if and only if $\cx(A)=A$ (that is, if and only if $A$ is a hemispace of $E$). Then for any $X\subseteq E$, we have
$f(\cx(X))=c(f(X))\cap f(E)$. In other words, $f$ induces an isomorphism of oriented matroids from $E$  to the restriction of $F$ to $f(E)$.\end{lemma}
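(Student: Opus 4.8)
The plan is to reduce everything to a bijection between the hemispaces of $E$ and those of the restriction $M'$ of $F$ to $f(E)$, and then to propagate the hemispace description of closure operators from Theorem~\ref{closure} across the map $f$. Write $M'=(f(E),-,c')$ for this restriction, where $c'(Y)=c(Y)\cap f(E)$ for $Y\subseteq f(E)$; since $E=E^{*}$ forces $f(E)=f(E^{*})=-f(E)$, the set $f(E)$ is invariant under the involution of $F$, so $M'$ is genuinely an oriented matroid by the remark following the oriented matroid axioms. The desired identity $f(\cx(X))=c(f(X))\cap f(E)$ is then exactly $f(\cx(X))=c'(f(X))$, so it suffices to intertwine $\cx$ with $c'$.

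First I would translate the hypothesis into the statement that $f$ restricts to a bijection from the hemispaces of $E$ to the hemispaces of $M'$. Call a set $A\subseteq E$ a \emph{candidate} if $A\cup A^{*}=E$ and $A\cap A^{*}=\cx(\emptyset)$; a candidate is a hemispace precisely when it is $\cx$-closed. Since $f$ is injective and $f(\alpha^{*})=-f(\alpha)$, for any candidate $A$ one has $f(A)\cup(-f(A))=f(E)$ and $f(A)\cap(-f(A))=f(A\cap A^{*})=f(\cx(\emptyset))=f(E)\cap c(\emptyset)=c'(\emptyset)$, using the loop hypothesis; thus $f(A)$ is a candidate for $M'$, and $f(A)$ is a hemispace of $M'$ iff it is $c'$-closed, i.e. iff $c(f(A))\cap f(E)=f(A)$. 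The hypothesis therefore says precisely that, for candidates $A$, the set $A$ is a hemispace of $E$ iff $f(A)$ is a hemispace of $M'$. Running this forwards gives that $f(H)$ is a hemispace of $M'$ for every hemispace $H$ of $E$; conversely, given a hemispace $H'$ of $M'$, I would set $A:=f^{-1}(H')$ (legitimate as $H'\subseteq f(E)$ and $f$ is injective), check by the same computation that $A$ is a candidate of $E$, and conclude from the hypothesis that $A$ is a hemispace with $f(A)=H'$. This yields the bijection $H\mapsto f(H)$ from $\mathfrak{H}$ onto the set $\mathfrak{H}'$ of hemispaces of $M'$.

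The remaining step is formal. For $X\subseteq E$ and a hemispace $H$ of $E$, Theorem~\ref{closure}(a) gives $\cx_{H}(X\cap H)=\bigcap_{K\in\mathfrak{H},\,K\supseteq X\cap H}K$, an intersection over a nonempty family (it contains $H$). Because $f$ is injective it commutes with this nonempty intersection and with $\cap$, and because $K\mapsto f(K)$ matches the hemispaces of $E$ containing $X\cap H$ with the hemispaces of $M'$ containing $f(X\cap H)=f(X)\cap f(H)$, Theorem~\ref{closure}(a) applied to $M'$ gives $c'_{f(H)}(f(X)\cap f(H))=f(\cx_{H}(X\cap H))$. Taking the union over $\mathfrak{H}$, which via the bijection is the same as the union over $\mathfrak{H}'$, and invoking Theorem~\ref{closure}(b) for both $E$ and $M'$ together with the fact that $f$ commutes with arbitrary unions, yields $c'(f(X))=\bigcup_{H}f(\cx_{H}(X\cap H))=f(\cx(X))$, as required.

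The main obstacle is the bookkeeping in the second paragraph: one must verify both directions of the hemispace correspondence and, crucially, that the loops match under $f$, which is exactly what the hypothesis $f(\cx(\emptyset))=f(E)\cap c(\emptyset)$ supplies. Once the hemispaces are in bijection, the transport of the closure operator is essentially mechanical, the only real points being that the intersections in Theorem~\ref{closure}(a) are over nonempty families and that injectivity of $f$ allows it to pass through those intersections.
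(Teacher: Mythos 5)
Your proof is correct and follows essentially the same route as the paper's: translate the hypotheses into a bijection $H\mapsto f(H)$ between the hemispaces of $E$ and those of the restriction of $F$ to $f(E)$, and then recover the closure operators from the hemispaces via Theorem \ref{closure}. The paper compresses both steps into a few lines (citing Proposition \ref{hemiequiv} for the first), whereas you spell out the same argument in detail, including the point that the intersections in Theorem \ref{closure}(a) run over nonempty families so that injectivity of $f$ lets it pass through them.
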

\begin{proof} Note that for $A\subseteq E$, one has
$A\cup A^{*}=E$ and $A\cap A^{*}=\cx(\emptyset)$  if and only if
$f(A)\cup -f(A)=f(E)$ and $f(A)\cap -f(A)=f(\cx(\emptyset))$.
Thus, the assumptions together with Proposition  \ref{hemiequiv}
imply that $f$ induces a bijection $A\mapsto f(A)$ between the hemispaces of $E$ and those of the restriction of $F$ to $f(E)$. Since the hemispaces of an oriented matroid determine its closure operator
(by our discussion of the relation between hemispaces, topes, circuits and the closure operator,  or more concretely from Theorem \ref{closure}), the result follows. \end{proof}

\subsection{Acycloids}
In this paper we also use the notion of an  acycloid (see \cite{FH},
\cite{topechar}) which is weaker than that of   an  oriented matroid.

 We define a notion of a preacycloid.
A \emph{preacycloid} is a system $A=(E,*,\mathfrak{T})$ where $E$ is  a finite set with a map $*: E\rightarrow E$ and $\mathfrak{T}$ is a   collection of subsets of $E$ such that

(A1) $(E,*)$ is a strictly involuted set,

(A2)  if $H\in \mathfrak{T}$,   then  $H\cap H^*=\emptyset$ and  $H\cup H^*=E\setminus L$, where $L:=E\setminus \bigcup_{H'\in \mathfrak{T}}H'$,

(A3) if $H\in \mathfrak{T}$, then $H^*\in \mathfrak{T}$.

We call  the set $L$ in (A2)  the set of \emph{loops} of $A$. We say $(E,*,\mathfrak{T})$ is \emph{loopless} if
$L=\emptyset$.  By an element $e$ of $A$, we mean an element $e$ of the underlying set $E$ of $A$. We shall  call an element of $\mathfrak{T}$ of a preacycloid a  tope.

 We say that two elements $e$, $f$  of $E$ are \emph{parallel}  if for all $H\in \mathfrak{T}$, one has $e\in H\iff f\in H$.  Parallelism is an equivalence relation, which we denote as $\sim$ or $\sim_{A}$, on $E$. We denote the parallelism (that is, $\sim$-equivalence) class of $e\in E$ by $[e]$ or $[e]_{A}$. If $e$ is a loop, then $[e]=L\supseteq \{e,e^*\}$ so $\vert [e]\vert >1$.

An \emph{acycloid} is defined to be  a preacycloid satisfying the following  condition:

(A4)  $\mathfrak{T}\neq \emptyset$ and   if $H_1,H_2\in \mathfrak{T}$ with $H_1\neq H_2$, then there exists $e\in H_1\setminus H_2$  such that $(H_1\setminus [e])\cup [e]^*\in \mathfrak{T}$.

 We say that a   preacycloid (or acycloid)  as above  is  \emph{simple}   if it satisfies the  equivalent  conditions (A5)--(A5)' below:

(A5) every parallelism class in $E$ is a singleton set.

(A5)'  if $e,f\in E$ with $e\neq f$,  then there   exists $H\in \mathfrak{T}$ such that $e\in H$ and $f\not\in H$.

 Note that any simple preacycloid is loopless, by the above remarks on the parallelism class of a loop.

Any preacycloid $A=(E,*,\mathfrak{T})$ determines a simple preacycloid
$A^{\circ }:=(E',\dag,\mathfrak{T}')$, which we call the \emph{simplification}  of  $(E,*,\mathfrak{T})$, where $E':=(E\setminus L)/\negthinspace\sim$,
$[e]^\dag=[e^*]$ for  $e\in E\setminus L$ and $\mathfrak{T}'=\{H/\negthinspace\sim\,\mid H\in \mathfrak{T}\}$. It is easily seen that a preacycloid  $(E,*,\mathfrak{T})$ is an acycloid if and only if its simplification   is a simple acycloid.

\begin{lemma}\label{acycpath} Let $A$ be an acycloid and $H,H'$ be topes of $A$. Let $n$ be the number of parallelism classes contained in $H\setminus H'$. Then there is a sequence $H=H_{0},\ldots, H_{n}=H'$ of topes of $A$ and elements $e_{i}\in H_{i-1}\setminus H_{i}$  such that
$H_{i}=(H_{i-1}\setminus [e_{i}])\cup [e_{i}]^{*}$ for  $i=1,\ldots, n$.
\end{lemma}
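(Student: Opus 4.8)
The plan is to induct on $n$, the number of parallelism classes contained in $H\setminus H'$, using (A4) to peel off one class at a time. First I would record the elementary structural facts that make the induction run. Since $e\sim f$ forces $e\in K\iff f\in K$ for every tope $K$, each tope is a union of parallelism classes, and hence so is every set difference $H\setminus H'$; thus $n$ really is the number of distinct classes comprising $H\setminus H'$. I would also check that $*$ permutes parallelism classes, justifying the notation $[e]^*=[e^*]$: using (A3), for $f\sim e$ and any tope $K$ one has $f^*\in K\iff f\in K^*\iff e\in K^*\iff e^*\in K$. Finally, for $e\in H\setminus H'$ one has $e\in H$ and $e\notin H'$, whence $[e]\subseteq H$, $[e]\subseteq H'^*$, and so $[e]^*\subseteq H'$; moreover $[e]\cap[e]^*=\emptyset$ because $[e]\subseteq H$, $[e]^*\subseteq H^*$, and $H\cap H^*=\emptyset$.

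For the base case $n=0$, the union-of-classes fact forces $H\setminus H'=\emptyset$, i.e.\ $H\subseteq H'$; applying $*$, which carries $H\setminus H'$ bijectively onto $H'\setminus H$ (by the symmetric roles of $H,H'$ as topes), gives $H'\subseteq H$ as well, so $H=H'$ and the empty sequence works.

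For the inductive step with $n\ge 1$ we have $H\neq H'$, so (A4) supplies an element $e\in H\setminus H'$ with $H_1:=(H\setminus[e])\cup[e]^*\in\mathfrak{T}$. The crux is the identity
\[
H_1\setminus H'=(H\setminus H')\setminus[e],
\]
which I would verify by a direct computation: $H_1\setminus H'=\big((H\setminus[e])\setminus H'\big)\cup\big([e]^*\setminus H'\big)$, the first term is $(H\setminus H')\setminus[e]$, and the second term is empty since $[e]^*\subseteq H'$ as noted above. As $[e]$ is a single parallelism class contained in $H\setminus H'$, this identity shows $H_1\setminus H'$ consists of exactly $n-1$ parallelism classes. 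By the induction hypothesis applied to $H_1,H'$ there is a sequence $H_1=K_1,\dots,K_n=H'$ of topes with $e_i\in K_{i-1}\setminus K_i$ realizing each single-class flip; prepending the step $H\to H_1$ (with $e_1=e$, noting $e\in H\setminus H_1$ since $e\notin H\setminus[e]$ and $e\notin[e]^*$) yields the desired length-$n$ sequence.

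The computation is almost entirely routine; the only point requiring genuine care is the bookkeeping around the two classes $[e]$ and $[e]^*$ affected by the flip, i.e.\ confirming that flipping $[e]$ deletes exactly this one class from the difference with $H'$ while creating no new one. This is precisely what the displayed identity encodes, and the vanishing of the $[e]^*$-term (forced by $[e]\subseteq H'^*$) is its essential content; everything else is dictated by (A2)--(A4) together with the fact that topes are unions of parallelism classes.
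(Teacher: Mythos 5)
Your proof is correct and follows exactly the route the paper intends: induction on $n$ via (A4), anchored on the observation that $H\setminus H'$ is a union of parallelism classes (the paper states this and omits the remaining details, which you supply). Your key identity $H_1\setminus H'=(H\setminus H')\setminus[e]$, together with the checks that $[e]^*\subseteq H'$ and $[e]\cap[e]^*=\emptyset$, is precisely the bookkeeping the paper leaves to the reader.
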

\begin{proof} Note that $H\setminus H'$ is in fact a finite union of parallelism classes (of non-loops) and that $e_{i}\in H\setminus H'$ implies  $[e_i]\subseteq H\setminus H'$. The result  follows easily by induction on $n$ using the acycloid axioms. Details are omitted.\end{proof}

\begin{lemma}\label{matroidisacycloid}
 Let $M=(E,*,\cx)$ be a finite  oriented matroid and $\mathfrak{T}$ denote  the set of topes of $E$. Then

(a)  $A:=(E,*,\mathfrak{T})$ is an  acycloid,  called  the \emph{tope acycloid}  (or \emph{preacycloid}) of $M$.

(b)  $M$ is an oriented geometry if and only if  its tope acycloid  is  simple.
\end{lemma}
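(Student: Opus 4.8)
The plan is to verify the preacycloid axioms (A1)--(A3) directly, to reduce the acycloid axiom (A4) to the case of an oriented geometry where it follows from properties of extreme elements, and to prove (b) by comparing the two notions of parallelism. Axioms (A1) and (A3) are immediate: (A1) is (M1), and for (A3) the map $H\mapsto H^{*}$ sends hemispaces to hemispaces (by (M4) one has $\cx(H^{*})=\cx(H)^{*}=H^{*}$, while $H^{*}\cup H=E$ and $H^{*}\cap H=L$), hence sends the tope $H\setminus L$ to the tope $H^{*}\setminus L$. For (A2) the point is to identify the preacycloid loop set $L':=E\setminus\bigcup_{T\in\mathfrak{T}}T$ with the matroid loop set $L:=\cx(\emptyset)$. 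Since every tope $T=H\setminus L$ lies in $E\setminus L$ we have $L\subseteq L'$; conversely, for $e\in E\setminus L$ the singleton $\{e\}$ contains no circuit, for otherwise $e^{*}\in\cx(\{e\})$ and (M5) (with $X=\emptyset$ and $x=e^{*}$) would give $e^{*}\in\cx(\emptyset)=L$, i.e.\ $e\in L$. As topes are exactly the inclusion-wise maximal subsets of $E$ containing no circuit, $e$ then lies in a tope, so $e\notin L'$; thus $L'=L$. The two conditions in (A2) now follow from $T^{*}=H^{*}\setminus L$ together with $H\cap H^{*}=L$ and $H\cup H^{*}=E$, which give $T\cap T^{*}=\emptyset$ and $T\cup T^{*}=E\setminus L=E\setminus L'$.

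For (A4), nonemptiness of $\mathfrak{T}$ holds because the sharp $L$ lies in some hemispace. For the step condition I would reduce to the case that $M$ is an oriented geometry. Using the observation that a preacycloid is an acycloid if and only if its simplification is a simple acycloid, it suffices to verify (A4) for the simplification of the tope preacycloid. I would first note that preacycloid parallelism coincides with matroid parallelism: passing to the associated reduced matroid and applying Theorem~\ref{closure}(a) to singletons gives $\cx(\{e\})=\bigcap_{H\in\mathfrak{H},\,e\in H}H$, so $\cx(\{e\})=\cx(\{f\})$ iff $e$ and $f$ lie in the same hemispaces iff they lie in the same topes. Consequently each hemispace is a union of parallelism classes, and a routine check then identifies the simplification of the tope preacycloid of $M$ with the tope preacycloid of the associated simple oriented matroid $M_{1}$. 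This reduces (A4) to the following statement for an oriented geometry: if $H_{1}\neq H_{2}$ are hemispaces, there is $e\in H_{1}\setminus H_{2}$ with $(H_{1}\setminus\{e\})\cup\{e^{*}\}$ again a hemispace (equivalently a tope, since $L=\emptyset$).

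The core of the argument, and the step I expect to be the main obstacle, is this last statement. Since $\cx(\ex(H_{1}))=H_{1}\not\subseteq H_{2}=\cx(H_{2})$, we cannot have $\ex(H_{1})\subseteq H_{2}$, so we may choose $e\in\ex(H_{1})\setminus H_{2}$; then $e\in H_{1}\setminus H_{2}$. Set $H_{1}':=(H_{1}\setminus\{e\})\cup\{e^{*}\}$. It contains exactly one of each pair $\{g,g^{*}\}$, so $H_{1}'\cup (H_{1}')^{*}=E$ and $H_{1}'\cap (H_{1}')^{*}=\emptyset$; hence $H_{1}'$ will be an inclusion-maximal circuit-free set, and so a tope, as soon as it contains no circuit. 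Any circuit contained in $H_{1}'$ but not in the circuit-free set $H_{1}$ must have the form $D\cup\{e^{*}\}$ with $D\subseteq H_{1}\setminus\{e\}$; but then $e=(e^{*})^{*}\in\cx(D\cup\{e^{*}\})$, and (M5) gives $e\in\cx(D)\subseteq\cx(H_{1}\setminus\{e\})$, contradicting $e\in\ex(H_{1})$, since extreme elements of an anti-exchange closure satisfy $e\notin\cx(H_{1}\setminus\{e\})$. Thus $H_{1}'$ is a tope and (A4) follows. The delicacy here is threefold: justifying the reduction to the simple case (and the identification with $M_{1}$), locating an extreme element inside the separation $H_{1}\setminus H_{2}$, and checking via (M5) and extremality that the flip creates no new circuit.

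Finally, for (b), if $M$ is an oriented geometry it is reduced, so its tope acycloid is loopless, and by the coincidence of parallelisms distinct elements have distinct (singleton) closures and hence lie in different topes; thus every parallelism class is a singleton and the acycloid is simple. Conversely, suppose the tope acycloid is simple. Then it is loopless, so $L'=L=\emptyset$ and $M$ is reduced, and every parallelism class is a singleton; it remains to show that every singleton is closed. If $\cx(\{e\})\neq\{e\}$, pick $f\in\cx(\{e\})\setminus\{e\}$; in a reduced matroid the circuit--closure relation forces $\{e,f^{*}\}$ to be a circuit, whence its mate $\{e^{*},f\}$ is a circuit and (M5) yields $e\in\cx(\{f\})$, so that $\cx(\{e\})=\cx(\{f\})$ and $e\sim f$, contradicting simplicity of the acycloid. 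Hence all singletons are closed and $M$, being reduced, is simple, i.e.\ an oriented geometry.
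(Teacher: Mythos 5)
Your proof is correct, and its overall skeleton matches the paper's: the preacycloid axioms are routine, (A4) is handled by flipping an extreme element chosen in $\ex(H_1)\setminus H_2$ (your observation that $\ex(H_1)\not\subseteq H_2$ is the same point as the paper's $\ex(H_1)\not\subseteq H_1\cap H_2$), the general case is reduced to the oriented-geometry case through the simplification, and part (b) comes down to comparing the two parallelisms. Where you genuinely diverge is in the two nontrivial verifications. To show the flipped set $(H_1\setminus\{e\})\cup\{e^*\}$ is again a tope, the paper invokes \cite[Theorem 7]{largeconvex} (the closed set $H_1\setminus\{e\}$ extends to a hemispace avoiding $e$), while you check directly via (M5) and extremality of $e$ that the flip creates no circuit, and then use the characterization of topes as inclusion-maximal circuit-free sets; your remark that a circuit-free set containing one element of each pair $\{g,g^*\}$ is automatically maximal (any enlargement contains an improper circuit) is what makes this self-contained. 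For simplicity and part (b), the paper proves (A5) by showing the sharp $\cx(\{e,f^*\})$ lies in a hemispace --- an argument using (M5), (M6), \cite[Proposition 2]{largeconvex}, \cite[Theorem 7]{largeconvex} and Lemma \ref{acycpath} --- and treats the converse of (b) separately; you instead prove once that acycloid parallelism coincides with matroid parallelism, via Theorem \ref{closure}(a) applied to singletons, and both directions of (b) fall out of that single observation (plus a short circuit computation with (M5) and the fact that the image of a circuit under $*$ is a circuit). Your route avoids (M6) and Lemma \ref{acycpath} entirely, at the cost of leaning on Theorem \ref{closure} and the circuit--tope duality recorded in the preliminaries; the paper's version is shorter on the page because it outsources more to \cite{largeconvex}. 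Both are sound.
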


\begin{proof}
This is well known; for instance, it underlies the definition of (simple) acycloids (see \cite{topechar}, \cite{OrMatBook}). We sketch a proof for completeness.

  We first show that $A$ is a preacycloid; this is essentially trivial.  Condition (A1) follows from (M1), and (A3) is easily checked using (M4).  If $H\in \mathcal{T}$, then $H\cap H^*
=\emptyset$ and $H\cup H^*=E\setminus \cx(\emptyset)=E$.  Hence  (A2) holds, with $L=\cx(\emptyset)$.

We next show that if $M$ is an oriented geometry, then
$A$ is a simple acycloid.

To prove (A4), notice that $(H_1\setminus H_2)\cap \ex (H_1)\neq \emptyset$. To see this note that otherwise $\ex (H_1)\subseteq H_1\cap H_2\subsetneqq H_1$ and $H_1\cap H_2$ is a closed set, contradicting $\cx(\ex (H_1))=H_1$. Then take $e\in (H_1\setminus H_2)\cap \ex (H_1)$.  We claim that $\cx(H_1\setminus \{e\})=H_1\setminus \{e\}$ because otherwise we will have another minimal subset whose closure is $H_1$, contradicting the uniqueness of $\ex (H)$. Therefore  $H_1\setminus \{e\}$ is closed. So by
 \cite[Theorem 7]{largeconvex}
, there exists a hemispace $H_3$ such that
 $H_3\supseteq  H_1\setminus\{e\}$ and $e\not\in H_3$. Then necessarily, $H_3=(H_1\setminus \{e\})\cup\{e^*\}$ as required.

 We prove (A5).  Take
 $e,f\in E$ with $e\neq f$ and  $e\neq f^*$.  We claim  that
$\cx(\{e,f^*\})$ is a sharp.  Note that $e^*\not\in \cx(\{e,f^*\})$. To see this, note that $e^*\in \cx(\{e,f^*\})$ will imply $e^*\in \cx(\{f^*\})$ by (M5). But  singleton subsets are  closed in a simplicial oriented geometry, so this is a contradiction.
 Similarly, $f\not\in \cx(\{e,f^*\})$.
Take $x\not\in \{e,f,e^*,f^*\}$. We prove that it cannot happen that $\{x,x^*\}\subseteq \cx(\{e,f^*\})$.  For  if so, by (M6), we will have $f\in \cx(\{e,x^*\})$ and $f\in \cx(\{e,x\})$. That will force $f\in \cx(\{e\})$ by  \cite[Proposition 2]{largeconvex},  contrary to $\cx(\{e\})=\{e\}$. Therefore  $\cx(\{e,f^*\})$ is a sharp. By  \cite[Theorem 7]{largeconvex},  this sharp  is contained in a hemispace. Hence (A5) follows.
Now $A$ has no loops since $M$ has no loops. Lemma \ref{acycpath} then easily implies that all parallelism classes in $A$ are singletons, and $A$ is simple as required.

Now  suppose more generally that $M$ is just an oriented matroid, and let $M'$ be the corresponding oriented geometry.
It is easy to see that the  tope acycloid $A'$ of $M'$ is the simplification of the tope preacycloid $A$ of $M$, and $A$ is therefore an acycloid. Finally, suppose that $A$ is simple.
Since $A$ has no loops, $M$ has no loops.  Since $A'$ has singleton parallelism classes, the     parallelism classes  in $A$ must be  precisely the sets $\cx(\{e\})$ for $e\in E$ which are identified to singletons in forming $M'$ from $M$. It follows that if  $A$ is simple, these sets are singletons and so $M$ is an oriented geometry.
 \end{proof}

 Note that there exist    acycloids which are not tope acycloids   of oriented matroids;   see \cite{topechar}.

 \subsection{Contraction of acycloids}\label{cont} We consider some  constructions which preserve preacycloids. Let $A=(E,*,\mathfrak{T})$ be a preacycloid.
  If $\Gamma$ is any subset of $E$, define \begin{equation*}
\mathfrak{T}_{\Gamma}:=\{H\setminus \Gamma\mid \text{ \rm $ H\in \mathfrak{T}$, $\Gamma\subseteq H$ and $(H\setminus \Gamma)\cup \Gamma^{*}\in \mathfrak{T}$}\}.
\end{equation*}  Trivially,   if
$\Gamma$ is not a union  of parallelism classes of non-loops  or
if  $\Gamma\cap \Gamma^{*}\neq \emptyset$, then   $\mathfrak{T}_{\Gamma}=\emptyset$.  Define  $A\dslash\Gamma$  to be the triple $(E,*, \mathfrak{T}_{\Gamma})$. It is easily checked that in general, $A\dslash \Gamma$ is a preacycloid and
$A\dslash\Gamma=A\dslash \Gamma^{*}$. If $L$ is the set of loops of $A$, then  $\Gamma\cup \Gamma^{*}\cup L$ is contained in the set of  loops of $A\dslash \Gamma$.
We will  use the construction $A\dslash \Gamma$ in this section  only for parallelism classes $\Gamma$, but use it more generally in Section 4.

Following \cite{topechar} but taking into account Remark \ref{signedset}, we  define the \emph{elementary contraction} $A/e$ of  the preacycloid  $A=(E,*,\mathfrak{T})$  at $e\in E$ as follows. Let $E':=E\setminus \{e,e^{*}\}$ and let $\dag$ denote the restriction of $*$ to an involution on $E'$.  If  $e$ is  a loop of $E$, define $A/e:=(E',\dag,\mathfrak{T})$, which is obviously a preacycloid. If $e$ is not a loop, define $ A/e:=(E',\dag,\mathfrak{T}_{[e]})$ where $[e]$ is the parallelism class of $e$; this is a preacycloid too since  $A/e=(A\dslash[e])/e$ where $e$ is a loop of  $A\dslash[e]$.  (This   elementary contraction  corresponds to the contraction of an oriented matroid by the  set $\{e,e^{*}\}$, when the acycloid is the tope acycloid of an oriented matroid, though we shall not need this.)

 Let $A$, $B$ be preacycloids.
Say that $B$ is an \emph{elementary contraction}   of $A$ if  $B=A/e$ for some  element (loop or non-loop)  $e$ of $A$.   Call $B$  an \emph{elementary quasicontraction}   of $A$ if it is equal to $A\dslash[e]$ for some  non-loop $e$ of $A$.
We say that $B$ is a \emph{contraction} (respectively, \emph{quasicontraction}) of  $A$ if there is $n\in \mathbb{N}$ and  a sequence $A=A_{0},A_{1},\ldots, A_{n}=B$ of preacycloids such that for each $i=1,\ldots, n$, $A_{i}$ is an elementary contraction (respectively, elementary quasicontraction) of $A_{i-1}$.

The next theorem states a  key fact for the proof of the main results in this paper, namely Handa's characterization of oriented matroids in terms of acycloids and their contractions,   along with  its  trivial  reformulation (replacing contractions by quasicontractions) which is better adapted for purposes here.
\begin{theorem}\label{handa}
Let $A=(E,*,\mathfrak{T})$ be a preacycloid.    Then the following are equivalent:

 (i) there is some
 oriented matroid $M=(E,*,\cx)$ whose tope (pre)acycloid  is $\mathfrak{T}$.

 (ii) every contraction of $A$ is an acycloid.

 (iii) every quasicontraction of $A$  is an acycloid.
\end{theorem}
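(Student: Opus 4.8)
The plan is to treat the equivalence (i)$\iff$(ii) as Handa's theorem itself, read through the involuted-set dictionary of Remark \ref{signedset} (so that the topes in the sense of \cite{topechar} match the topes of our preacycloid), and to reduce the genuinely new content to proving the ``trivial reformulation'' (ii)$\iff$(iii). For the latter I would isolate two small facts: that acycloid-ness is unaffected by loops, and that non-loop contraction sequences and quasicontraction sequences run in lockstep.

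First I would record a \emph{loop-invariance lemma}: if $B=(E,*,\mathfrak{T})$ is a preacycloid and $B'$ is obtained from $B$ by deleting a $*$-stable set of loops from the ground set, then $B$ is an acycloid iff $B'$ is. The point is that the tope collection $\mathfrak{T}$, the non-loop set $E\setminus L$, and the parallelism classes of non-loops are all unchanged by such a deletion: a loop is never parallel to a non-loop (a non-loop lies in some tope, a loop in none), and parallelism of non-loops is detected by $\mathfrak{T}$ alone. Since each of (A1)--(A4) refers only to these data (note that in (A4) the element $e\in H_1\setminus H_2$ is always a non-loop, so $[e]$ is a non-loop class), the axioms hold for $B$ iff they hold for $B'$.

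Next I would set up the step-by-step correspondence. For a non-loop $e$, the elementary contraction $A/e$ and the elementary quasicontraction $A\dslash[e]$ have the \emph{same} tope set $\mathfrak{T}_{[e]}$ and differ only by the deleted loop pair $\{e,e^*\}$; indeed $A/e=(A\dslash[e])/e$ with $e$ a loop of $A\dslash[e]$. Hence $A/e$ and $A\dslash[e]$ have identical non-loops and identical non-loop parallelism classes, so the same non-loop $f$ is available for a next step in either, and the class $[f]$ used there is the same subset in both. By induction on the length of a sequence, a non-loop contraction $A/e_1/\cdots/e_n$ and the matching quasicontraction $A\dslash[e_1]\dslash\cdots\dslash[e_n]$ yield preacycloids with identical tope sets, differing only by deleted loops. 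A general contraction merely interleaves extra elementary contractions at loops, which by the loop-invariance lemma alter neither acycloid-ness nor the available non-loop steps.

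With these two ingredients both implications are immediate. For (ii)$\Rightarrow$(iii), any quasicontraction $B$ of $A$ has a matching contraction $B'$ with the same topes up to loops, so $B$ is an acycloid iff $B'$ is, and $B'$ is an acycloid by (ii). For (iii)$\Rightarrow$(ii), from any contraction $B$ of $A$ one discards the loop-contraction steps and replaces each non-loop elementary contraction by the corresponding elementary quasicontraction, obtaining a quasicontraction $B'$ with the same topes up to loops; then $B$ is an acycloid iff $B'$ is, which holds by (iii). The only thing demanding care beyond bookkeeping is verifying that the non-loop set and parallelism classes really coincide at every stage, since it is precisely this matching that licenses free translation between the two families of operations; once that is in place, the loop-invariance lemma does the rest and justifies the word ``trivial.''
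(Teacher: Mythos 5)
Your proposal is correct and follows essentially the same route as the paper: both cite Handa's theorem from \cite{topechar} for (i)$\iff$(ii), and both reduce (ii)$\iff$(iii) to the triviality of loops (your loop-invariance lemma is the paper's observation (a), slightly generalized) together with the fact that $A/e=(A\dslash[e])/e$ and an induction matching non-loop contraction steps with quasicontraction steps while loop steps are interleaved harmlessly. The only difference is presentational: the paper packages the lockstep correspondence as commutation identities (its facts (b)--(d)) culminating in the identity $A/e_{1}/\cdots/e_{n}=A\dslash[e_{i_{1}}]\dslash\cdots\dslash[e_{i_{p}}]/e_{1}/\cdots/e_{n}$, whereas you track the invariant data (topes, non-loops, parallelism classes) directly through the induction.
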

\begin{proof} The equivalence of (i) and (ii)  is from  \cite{topechar}.   (The fact that (i) implies (ii) was previously known from descriptions of  contractions of  oriented matroids in terms of their topes, and that (ii) implies (i)  was  conjectured by Tomizawa.)

We sketch a proof of the equivalence of  (ii) and (iii)  using the following    observations on ``triviality of loops'' (the proofs of which we omit).

Let $B$ be a  preacycloid, $f$ be a loop of $B$, and $e$ be an element  of $B$. Then

(a)  $B$ is an acycloid if and only if $B/f$ is an acycloid.

(b) If $e\neq f,f^{*}$, then $e$ is a non-loop of $B$ if and only if it is a non-loop of  $B/f$. In that case, the parallelism classes of $e$ in $B$ and $B/f$ are equal (that is,  $[e]_{B}=[e]_{B/f}$).

(c) If $e\neq f,f^{*}$ and $e$ is not a loop of $B$, then $B\dslash [e]/f=B/f\dslash [e]$ where
$[e]:=[e]_{B}=[e]_{B/f}$.

(d) If $e\neq f,f^{*}$, then $B/e/f=B/f/e$.

Above and  below,  the omitted parentheses should be left justified;
for example,  $B\dslash [e]/f:=(B\dslash [e])/f$.  Using these facts,
one sees by induction on $n$ that
\begin{equation*}
A/ e_{1}/\ldots /e_{n} =
A\dslash [e_{i_{1}}]\dslash\ldots\dslash [e_{i_{p}}]/e_{1}/\ldots /e_{n}
\end{equation*} if the left hand side is defined (that is, if $e_{j}\neq  e_{i}, e_{i}^{*}$ for $i<j$),  where $i_{1}<\ldots <i_{p}$ are the indices $i$ such that the elementary contraction $/e_{i}$  on the left hand side is at a non-loop $e_{i}$ (of $A/e_{1}/\ldots /e_{i-1}$).
Here, the parallelism class $[e_{i_{j}}]$ is taken in $A\dslash [e_{i_{1}}]\dslash\ldots\dslash [e_{i_{j-1}}]$, for $j=1,\ldots, p$.
 The elementary contraction $/e_{j}$ on the right hand side  is at a loop (of $A\dslash [e_{i_{1}}]\dslash\ldots\dslash [e_{i_{p}}]/e_{1}/\ldots /e_{j-1}$), for $j=1,\ldots, n$.
The identity applies in particular if $A\dslash{[e_{1}]}\dslash \ldots \dslash [e_{n}]$ is defined (that is, $e_{j}$ is not a loop in $A\dslash{[e_{1}]}\dslash \ldots \dslash [e_{j-1}]$ for $j=1,\ldots, n$), with $i_{j}=j$ for $j=1,\ldots, n$. It follows that the set of preacycloids arising as a  contraction of $A$ is the same as the set of  preacycloids obtained by applying  a (possibly empty) sequence of successive contractions at loops to some quasicontraction of $A$.  Then $\text{\rm (ii)$\iff$(iii)}$ follows from (a)-(d).
\end{proof}

We leave the interested reader to check that, for  a preacycloid $A$ as in the theorem, $A$ is not the tope (pre)acycloid of any oriented matroid (that is, the condition \ref{handa}(i) fails) if and only if there a contraction (respectively, quasicontraction) of $A$ which has no topes.

\subsection{Groupoids with root systems}\label{groupoidsetnotions}
 In the remainder of this  section we describe rudimentary properties of  the notion of  signed groupoid set as defined in \cite{rootoid1}.  We also discuss some  additional conditions one  may  impose which give rise to classes of structures which abstract, with varying degrees of generality, certain basic features of Coxeter groups and their root systems.

A \emph{groupoid} is a small category in which every morphism has an inverse.
A groupoid is called \emph{connected} if  it has at least one object and  for any     of its  objects $a,b$, there is at least one
 morphism from $b$ to $a$.
A groupoid is called \emph{simply connected} if for any  of its  objects $a,b$, there is   at most one morphism from $b$ to $a$.
A groupoid is said to be \emph{finite} if the set of  its morphisms (and hence also the set of its objects)  is finite.
Let $G$ be a groupoid and $a,b$ be objects  of $G$. We denote   by   $\lsub{a}{G}$ the set of morphisms with codomain $a$. Denote  by   $\lrsub{a}{G}{b}$ the set of morphisms from $b$ to $a$.  Denote  by   $\Ob  (G)$ the set of objects of $G$ and  by  $\Mor   (G)$ the set of morphisms of $G$. A \emph{subgroupoid} $H$ of $G$ is a subcategory of $G$ such that for any morphism in $H$, its inverse in $G$ is also contained in $H$.   A   maximal connected subgroupoid of $G$ is called a \emph{connected  component} of $G$.   For  $a\in \Ob  (G)$, the connected component containing $a$ is denoted $G[a]$.  A subgroupoid $H$ is said to be  \emph{full} if it contains all morphisms in $G$ between any two objects of $G$.  We say a subgroupoid $H$ is a \emph{union of components of $G$} if  every component of $H$ is a component of $G$. We denote by $1_{a}$ the identity morphism at the object $a$.

 For an involuted set $(E,*)$, we often  write $-x:=x^*$.  A \emph{definitely involuted set} $E$ is a strictly involuted set $E$ together with a chosen subset $E^+\subseteq E$ such that $E=E^+\dotcup  (E^+)^*$  where we use $\dot\cup$ to denote a disjoint union. We denote $(E^+)^*$ by $-E$.

\subsection{} A \emph{signed groupoid set} is a triple $(G,\Phi,\Phi^+)$ where $G$ is a groupoid and   $\Phi=(\lsub{a}{\Phi})$ is a family of definitely involuted sets indexed by the objects of $G$ such that  $G$  acts on $\Phi$, via maps   $\lrsub{a}{G}{b}\times \lsub{b}{\Phi}\rightarrow \lsub{a}{\Phi}$  for $a,b\in \Ob(G)$,  with the properties:

(i) $1_b(x)=x$ where $x\in \lsub{b}{\Phi}$,

(ii) $f(g(x))=(fg)(x)$ where $x\in \lsub{b}{\Phi}$, $g\in \lrsub{a}{G}{b}$ and  $f\in \lsub{c}{G}_a$,

(iii) $f(-x)=-f(x)$ where $x\in \lsub{b}{\Phi}$ and $ f\in \lrsub{a}{G}{b}$.

We will call the elements in $\lsub{a}{\Phi}$ \emph{roots} (at object $a$) and the elements in $\lsub{a}{\Phi}^+$ (respectively $\lsub{a}{\Phi}^-$) \emph{positive roots} (respectively \emph{negative roots}) at object $a$. The collection of   definitely involuted sets $\lsub{a}{\Phi}$  is called the \emph{root system} of $G$. For convenience, sometimes we write $\alpha>0$ (respectively $\alpha<0$) if $\alpha$ is a positive root (respectively if $\alpha$ is a negative root).

 We   shall not define a category of signed groupoid sets  in this paper, but will occasionally   use  the obvious notion of an isomorphism of signed groupoid sets.

 For a signed groupoid set $R=(G,\Phi,\Phi^+)$ and a subgroupoid $H$ of $G$, let the restriction
$R_H$ of $R$ to $H$  be the signed groupoid set $R_H:=(H,\Psi,\Psi^+)$ defined as follows:
for any $a\in \Ob(H)$, $\lsub{a}{\Psi}:=\lsub{a}{\Phi}$ as definitely involuted set, and for any $a,b\in \Ob(H)$, the
map $\lrsub{a}{H}{b}\times \lsub{b}{\Psi}\to \lsub{a}{\Psi}$ is the restriction of the map $\lrsub{a}{G}{b}\times \lsub{b}{\Phi}\rightarrow \lsub{a}{\Phi}$. If $H$ is a component (respectively, a union of components) of $G$, we say that $R_H$ is a component (respectively, a union of components) of $R$.
For an object $a$ of $R$ (that is, an object $a$ of the underlying groupoid $G$ of $R$), we let $R[a]$ denote the component of $R$ whose underlying groupoid is $G[a]$.

\begin{example}\label{coxeterexample}(\emph{Coxeter groups and real reflection groups})
 A lot of the definitions and terminology we use for signed groupoid sets is motivated by standard notions in the study of Coxeter groups and reflection groups, but the setting is   far more general and familiarity with Coxeter groups and root systems is not required in the proofs.
 We provide here some informal remarks for readers who are not familiar with these matters.

Consider a real vector space $V$ equipped with a symmetric bilinear (but not necessarily positive definite) form
$(-\mid -)\colon V\times V\to \mathbb{R}$.
A vector $\alpha\in V$ is said to be non-isotropic if $(\alpha\mid \alpha)\neq 0$. In that case, the corresponding reflection is defined to be the (unique) invertible linear map  $s_{\alpha}\colon V\to V$ which fixes the hyperplane orthogonal to $\alpha$ pointwise and maps $\alpha$ to $-\alpha$.  A  real reflection group on $V$ is a group $W$ of invertible linear maps of $V$ which is generated by a set of reflections.  By a root system for $W$ is generally meant  a $W$-stable set $\Phi$ of $V\setminus \{0\}$
such that $W$ is generated by reflections in vectors in some subset  of $\Phi$ (this subset is often but not always equal to $\Phi$, depending on the context in which $W$ and $\Phi$ arise).

Coxeter groups $W$ are a class of groups which are defined by existence of a certain simple type of presentation with a  special  set of generators  $S$, the elements of which are called  ``simple reflections.''  One then calls $(W,S)$ a Coxeter system. Coxeter groups  always have faithful representations as reflection groups with root systems as above.     The finite Coxeter groups arise by taking $(V,(-\mid -))$ to be an inner product space (i.e. $V$ is finite dimensional and  the form is positive definite) and $W$ as a finite group generated by reflections on $V$.  For the root system, one may take the set of all unit vectors $\alpha\in V$  such that $s_{\alpha}$ in $W$, but other choices are often more natural in applications (for instance,  ``crystallographic''  root systems of finite Weyl groups arise naturally in the structure theory of semisimple complex Lie algebras).

Coxeter groups in general
have a similar  ``standard geometric representation'' defined from their presentation,  as described in \cite{bjornerbrenti} or \cite{Hum}, and a standard  root system $\Phi$ such that $s_{\alpha}\in W$ for  all $\alpha\in \Phi$. Certain (crystallographic) Coxeter groups also arise naturally as   Weyl groups of  Kac-Moody Lie algebras (\cite{Kac}); in this case, the root system is the disjoint union of a set of ``real roots''
(the reflections in which generate $W$) and a set of  ``imaginary roots''
(roots in which  may even be isotropic and so have no corresponding reflection).

The root system, as a subset of $V\setminus\{0\}$ stable under multiplication by $\pm 1$, may be regarded as a realizable oriented matroid. For Coxeter groups, its  construction or properties gives rise to a standard (up to $W$-action) hemispace of $\Phi$, called the standard positive system $\Phi^{+}$, elements of which are called positive roots. Typically,  $S$  may be  characterized geometrically either  as  the  reflections $s_\alpha$ in the roots $\alpha$   which span the extreme rays  of  $\cone(\Phi^{+})$, or  combinatorially as the group elements which map only one positive  root  (and its other positive scalar multiples, if any) outside  $\Phi^{+}$.
For a reflection group $W$ which is not a Coxeter group, there is usually  no  canonical choice of positive system, but  one may  define  $\Phi^{+}$ to be an arbitrarily chosen hemispace of  $\Phi$; simple reflections in either sense above do  not  then typically generate $W$.
For example, the  orthogonal group  of  a real Euclidean space,   with the root system being the unit sphere,  is examined from this point of view in  \cite{orthogonalgroup}.

 Let $\Phi$ be a root system of a Coxeter group  (or more generally, real reflection group)  $W$ as above.  Regard the group $W$ as a groupoid $G$ with a single object $\bullet$,  and the set of morphisms being the set of elements in $W$. Let $\lsub{\bullet}{\Phi}=\Phi, \lsub{\bullet}{\Phi}^+=\Phi^+$. The action of morphisms on the roots is exactly the action of the
 corresponding elements in $W$ on the roots. Then $(G,\lsub{\bullet}{\Phi},\lsub{\bullet}{\Phi}^+)$ is a signed groupoid set.
\end{example}

\begin{example} (\emph{Preacycloids}) \label{exampleacycloid}
Let $A=(E,*,\mathfrak{T})$ be a preacycloid with $\mathcal{T}\neq \emptyset$ and  $L$ as its set of loops.
Choose a subset $L^+$ of $L$ such that $L=L^+\dotcup (L^+)^*$.
Consider a connected, simply connected groupoid $G$ whose objects are indexed by the topes of $A$. For $H\in \mathfrak{T}$, denote the corresponding object by $\widetilde{H}$. The set $\lsub{\widetilde{H}}{\Phi}$ of roots at an object $\widetilde{H}$  is $E$ and the set $\lsub{\widetilde{H}}{\Phi}^+$ of positive roots at this object  is $H\cup L^+$.
Any morphism acts on $E$ as identity. Then $R=(G,\Phi,\Phi^+)$ is a signed groupoid set. In particular, associated to an  oriented matroid, one can construct such a signed groupoid set
(by taking $A$ above as the tope  (pre)acycloid of  the  oriented matroid).
We shall  denote this signed groupoid set $R$ as $\SGS (A)$ to denote its dependence on $A$. It is easy to see that the isomorphism type  of $\SGS(A)$ is independent of the choice of $L^{+}$.
\end{example}

\begin{example} (\emph{Brink-Howlett Groupoid})\label{brinkhowexamp}
Let $(W,S)$ be a Coxeter group with standard root system $\Phi$,
$\Phi^+$  be   the standard positive system and $\Delta\subseteq \Phi^+$  denote its simple system (the positive roots corresponding to the simple reflections). Construct a groupoid with objects being the subsets of $\Delta$ and a morphism  from $J$ to $I$   being of the form $(I,w,J)$ where $I,J\subseteq \Delta$ and  $w\in W$ with  $w(I)=J$. At each object, let the root system and the set of positive roots  be inherited  from those of $W$; that is,  $\lsub{I}{\Phi}=\Phi$ and $\lsub{I}{\Phi}^+=\Phi^+$. Equipped with  such  root systems  at its objects , the groupoid becomes a signed groupoid set.
 This  groupoid (though not its root system as defined here)   is considered in \cite{bh} for the purpose of studying the normalizer of parabolic subgroups of $W$. We will later show that this groupoid set can be obtained by applying the
 generalized   Brink-Howlett  construction, described  in Section \ref{groupoidsetnotions}, to the signed groupoid set  in Example \ref{coxeterexample}.
(We remark that there are also more subtle choices of root system, which we do not discuss in this paper,  for the Brink-Howlett groupoid.)
\end{example}

\begin{example}\label{weylgroupoidexample} (\emph{Weyl groupoid})
Given a Cartan scheme $\mathcal{C}$, one can associate to it a Weyl groupoid $G$. Suppose that $G$ has a root system of type $\mathcal{C}$, in the sense of \cite{weylgroupoid5}.  At each object,
the root system is a (definitely involuted) subset of the free $\mathbb{Z}-$module of  rank equal to the rank of  $\mathcal{C}$. The  morphisms of the Weyl groupoid
can be considered as automorphisms of $\mathbb{Z}^{\text{rank}(\mathcal{C})}$ and the action of them on the roots satisfies the required
conditions of a signed groupoid set. For details of Weyl groupoids,  see  \cite{weylgroupoid}, \cite{weylgroupoid1}, \cite{weylgroupoid2}, \cite{weylgroupoid3}, \cite{weylgroupoid4}, \cite{weylgroupoid5} and  \cite{weylgroupoid6}.
\end{example}

\subsection{} We say that a signed groupoid set $(G,\Phi,\Phi^+)$ is \emph{finite} (respectively \emph{connected}, \emph{simply connected}) if  is $G$ is finite and $\lsub{a}{\Phi}$ is finite for all $a\in \Ob(G)$ (respectively $G$ is connected, simply connected). For $g\in \lrsub{a}{G}{b}$, we define the \emph{inversion set} \begin{equation*}
\Phi_g=\lsub{a}{\Phi}^+\cap g(\lsub{b}{\Phi}^-)=\{\alpha\in \lsub{a}{\Phi}^+\mid g^{-1}(\alpha)\in \lsub{b}{\Phi}^-\}.
\end{equation*}

A positive (respectively negative) root $\alpha$ in $_a\Phi^+$ (respectively $_a\Phi^-$) is called \emph{imaginary} if $\alpha\not\in \Phi_g$ for any
$g\in \lsub{a}{G}$ (respectively if $-\alpha\not\in \Phi_g$ for any $g\in \lsub{a}{G}$). A root that is not imaginary is called \emph{real}.
If for all $_a\Phi$,  where  $a\in \Ob  (G)$, the set of imaginary roots is empty then we call $(G,\Phi,\Phi^+)$ \emph{real}.

Denote the  set of positive (respectively, negative) imaginary roots of $\lsub{a}{\Phi}$ as $\tensor*[_{a}]{\Phi}{_{\mathrm{im}}^+}$ (respectively,  $\tensor*[_{a}]{\Phi}{_{\mathrm{im}}^-}=-\tensor*[_{a}]{\Phi}{_{\mathrm{im}}^+}$) and let  $\tensor*[_{a}]{\Phi}{_{\mathrm{im}}}
 :=\tensor*[_{a}]{\Phi}{_{\mathrm{im}}^+}\cup\tensor*[_{a}]{\Phi}{_{\mathrm{im}}^-}$.  Let $\lrsub{a}{\Phi}{\mathrm{re}}=\lsub{a}{\Phi}\setminus \tensor*[_{a}]{\Phi}{_{\mathrm{im}}}$,
   $\tensor*[_a]{\Phi}{_{\mathrm{re}}}=\tensor*[_a]{\Phi}{^{+}}\setminus \tensor*[_{a}]{\Phi}{_{\mathrm{im}}^+}$ and
    $\tensor*[_a]{\Phi}{_{\mathrm{re}}^{-}}=\tensor*[_a]{\Phi}{^{-}}\setminus \tensor*[_{a}]{\Phi}{_{\mathrm{im}}^-}$ denote the sets of all, all positive and all negative real roots at $a$, respectively.
    Observe that for any morphism
    $g\colon b\to a$ in $G$, we have  $g(\tensor*[_{b}]{\Phi}{_{\mathrm{im}}^{\pm}})=\tensor*[_{a}]{\Phi}{_{\mathrm{im}}^{\pm}}$,   $g(\tensor*[_{b}]{\Phi}{_{\mathrm{re}}})=\tensor*[_{a}]{\Phi}{_{\mathrm{re}}}$ and  $\Phi_{g}=\tensor*[_{a}]{\Phi}{_{\mathrm{re}}^+}\cap g(\tensor*[_{b}]{\Phi}{_{\mathrm{re}}^-})$.

  The following  facts are from \cite{rootoid1}, where they are expressed in the language of protorootoids and checked by routine computations with $1$-cocycles.
\begin{lemma}\label{sgsfacts1}
Let $R=(G,\Phi,\Phi^+)$ be a signed groupoid set.

(a) Let $f$ and $g$  be   morphisms in $G$  such that the composite $fg$ is defined. Then we have  $\Phi_{fg}=(\Phi_f\setminus -f\Phi_g)\dotcup  (f\Phi_g\setminus -\Phi_f)$.

(b) For any morphism $h$ of $G$, we  have $\Phi_{h^{-1}}=-h^{-1}{\Phi_h}$.

(c) Let $f,g$ be two composable morphisms in $G$. We have the equivalence: $\Phi_{f^{-1}}\cap \Phi_g=\emptyset \Leftrightarrow \Phi_{f}\subseteq \Phi_{fg} \Leftrightarrow \Phi_{fg}=\Phi_f\dotcup  f\Phi_g$.

(d) For morphisms $h,g\in \lsub{a}{G}$, we have $\Phi_h=\Phi_g$ if and only if $\Phi_{h^{-1}g}=\emptyset$.
\end{lemma}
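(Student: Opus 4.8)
The plan is to prove the central cocycle identity (a) first, by tracking the sign of a single root through the composite, then to verify (b) by a short direct computation, and finally to deduce (c) and (d) as purely formal consequences of (a) and (b), using only that each morphism acts on roots as an involution-commuting bijection.

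\emph{Part (a).} Write $g\in\lrsub{a}{G}{b}$ and $f\in\lrsub{c}{G}{a}$, so that $fg\in\lrsub{c}{G}{b}$. First I would check that every set appearing in the claimed identity is a subset of $\lsub{c}{\Phi}^{+}$: indeed $\Phi_{f}\subseteq\lsub{c}{\Phi}^{+}$ by definition, while if $f(\delta)$ with $\delta\in\Phi_{g}$ happens to lie in $\lsub{c}{\Phi}^{-}$ then $-f(\delta)\in\Phi_{f}$ (since $f^{-1}(-f\delta)=-\delta<0$), so $f(\delta)\in-\Phi_{f}$ is deleted in $f\Phi_{g}\setminus(-\Phi_{f})$. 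It therefore suffices to decide, for an arbitrary $\gamma\in\lsub{c}{\Phi}^{+}$, whether $\gamma$ lies on each side. To such a $\gamma$ I attach the signs $s_{1},s_{2}\in\{+,-\}$ of $f^{-1}(\gamma)\in\lsub{a}{\Phi}$ and of $(fg)^{-1}(\gamma)=g^{-1}f^{-1}(\gamma)\in\lsub{b}{\Phi}$. Using condition (iii), one reads off for $\gamma>0$ that $\gamma\in\Phi_{f}\iff s_{1}=-$, that $\gamma\in f\Phi_{g}\iff(s_{1},s_{2})=(+,-)$, that $\gamma\in-f\Phi_{g}\iff(s_{1},s_{2})=(-,+)$, and that $\gamma\notin-\Phi_{f}$ always. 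Since $\gamma\in\Phi_{fg}\iff s_{2}=-$, running through the four patterns for $(s_{1},s_{2})$ shows the two sides agree on every $\gamma$. This sign bookkeeping is the one genuinely computational step, and I expect it to be the main (if routine) obstacle.

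\emph{Part (b).} For $h\in\lrsub{a}{G}{b}$ I would exhibit $\beta\mapsto-h^{-1}(\beta)$ as a bijection $\Phi_{h}\to\Phi_{h^{-1}}$. If $\beta\in\Phi_{h}$ then $h^{-1}(\beta)\in\lsub{b}{\Phi}^{-}$, so $-h^{-1}(\beta)\in\lsub{b}{\Phi}^{+}$, and applying $h$ gives $h(-h^{-1}\beta)=-\beta\in\lsub{a}{\Phi}^{-}$, whence $-h^{-1}(\beta)\in\Phi_{h^{-1}}$; the map $\alpha\mapsto-h(\alpha)$ reverses this. Hence $\Phi_{h^{-1}}=-h^{-1}\Phi_{h}$.

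\emph{Parts (c) and (d).} These are formal. Since $f$ is injective and commutes with the involution, $f(\Phi_{f^{-1}}\cap\Phi_{g})=f(\Phi_{f^{-1}})\cap f\Phi_{g}=(-\Phi_{f})\cap f\Phi_{g}$, using (b) in the form $\Phi_{f^{-1}}=-f^{-1}\Phi_{f}$; as $f$ is bijective this gives $\Phi_{f^{-1}}\cap\Phi_{g}=\emptyset\iff\Phi_{f}\cap(-f\Phi_{g})=\emptyset$. The disjointness $\Phi_{f}\cap f\Phi_{g}=\emptyset$ is immediate (the first forces $f^{-1}\gamma<0$, the second $f^{-1}\gamma>0$), so intersecting (a) with $\Phi_{f}$ yields $\Phi_{fg}\cap\Phi_{f}=\Phi_{f}\setminus(-f\Phi_{g})$, whence $\Phi_{f}\subseteq\Phi_{fg}\iff\Phi_{f}\cap(-f\Phi_{g})=\emptyset$; and when this holds (a) collapses to $\Phi_{fg}=\Phi_{f}\dotcup f\Phi_{g}$ (noting $\Phi_{f}\cap(-f\Phi_{g})=\emptyset\iff f\Phi_{g}\cap(-\Phi_{f})=\emptyset$, the two being negatives of one another), the reverse implication $\Phi_{fg}=\Phi_{f}\dotcup f\Phi_{g}\Rightarrow\Phi_{f}\subseteq\Phi_{fg}$ being trivial. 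This proves (c). For (d), take $h,g\in\lsub{a}{G}$ and substitute $f=h^{-1}$ into (a); replacing $\Phi_{h^{-1}}$ by $-h^{-1}\Phi_{h}$ via (b) and using the injectivity of $h^{-1}$ to pull it out of the set operations gives $\Phi_{h^{-1}g}=-h^{-1}(\Phi_{h}\setminus\Phi_{g})\dotcup h^{-1}(\Phi_{g}\setminus\Phi_{h})$, which is empty precisely when $\Phi_{h}\setminus\Phi_{g}=\Phi_{g}\setminus\Phi_{h}=\emptyset$, i.e. $\Phi_{h}=\Phi_{g}$.
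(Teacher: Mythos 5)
Your proposal is correct and takes essentially the same approach as the paper: part (a) is proved by the same sign-tracking case analysis (your four-pattern bookkeeping is just a more explicit form of the paper's definitional decomposition), and (c) and (d) are then deduced formally from (a) and (b) exactly as in the paper. The only immaterial difference is that you prove (b) directly via the bijection $\beta\mapsto -h^{-1}(\beta)$, whereas the paper obtains it by specializing (a) to $f=h^{-1}$, $g=h$.
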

\begin{proof}
(a) By definition \begin{equation*}
\Phi_{fg}=\{\alpha\mid \alpha\in \Phi_f, -f^{-1}(\alpha)\not\in \Phi_g\}\dotcup  \{\alpha\mid \alpha>0, f^{-1}(\alpha)\in \Phi_g\}.
\end{equation*}
Note that  $\alpha<0$ and $f^{-1}(\alpha)\in \Phi_g$ imply $-\alpha\in \Phi_f$. Therefore the equation holds.

(b) This follows from (a) on taking $f:=h^{-1}$ and $g:=h$.

(c) Suppose $\Phi_{f^{-1}}\cap \Phi_g=\emptyset$.  Note that it follows from  (b) that $\Phi_{f^{-1}}=-f^{-1}\Phi_f$.  Therefore $\Phi_f\cap -f\Phi_g=\emptyset$.  By (a) $\Phi_f\subseteq \Phi_{fg}$. Suppose that $\Phi_f\subseteq \Phi_{fg}$. Again by (a) this implies that $\Phi_f\cap -f\Phi_g=\emptyset$ (and therefore $f\Phi_g\cap -\Phi_f=\emptyset$). Use (a) again we see that $\Phi_{fg}=\Phi_f\dotcup  f\Phi_g$.  Finally suppose that $\Phi_{fg}=\Phi_f\dotcup  f\Phi_g$.  By (a) this implies that $\Phi_f\cap -f\Phi_g=\emptyset$. Hence $-f^{-1}\Phi_f\cap f^{-1}f\Phi_g=\emptyset$. Therefore $\Phi_{f^{-1}}\cap \Phi_g=\emptyset$.

(d) This follows by taking $f:=h^{-1}$  in (a) and using (b).  \end{proof}

\subsection{}\label{sgsterm}  Let $R=(G,\Phi,\Phi^{+})$ be a signed groupoid set.  If $\Phi_g=\emptyset$ implies that $g$ is the identity morphism then we say $(G,\Phi,\Phi^+)$ is \emph{faithful}.  By Lemma \ref{sgsfacts1}, this holds if and only if for any $a\in \Ob(G)$ and any  $g,h\in \lsub{a}{G}$ with $\Phi_g=\Phi_h$, one has $ g=h$.

Assume for the rest of this subsection that $R$ is faithful, unless otherwise stated\footnote{Here and elsewhere in this paper, assumptions of faithfulness can sometimes be  removed, though often at the expense of more cumbersome statements or notation.}.

There is a partial order $\leq_a$ on $\lsub{a}{G}$, called the \emph{weak order} of $G$ (at $a$), such that $g\leq_{a} h$ if and only if $\Phi_g\subseteq \Phi_h$. Note that $1_a
$ is the minimum element of $(\lsub{a}{G},\leq_a)$.    In what follows  when we compare two morphisms at a given object, it is always understood that they are compared in the sense of weak order.
  If $g,h\in \lsub{a}{G}$, we may write $g\leq h$ instead of $g\leq_a
h$ if confusion is unlikely.

A morphism $g$ of $G$ is said to be \emph{simple} if $\vert \Phi_g\vert =1$.
 By Lemma \ref{sgsfacts1}(b), the inverse of a simple morphism is simple.   The set of simple morphisms with codomain $a$ is denoted $\lsub{a}{S}$.
We call a morphism $g\in \lsub{a}{G}$ \emph{atomic} if   $g$ is an atom in the poset $\lsub{a}{G},\leq_a)$ (that is, if $h\in \lsub{a}{G}$ with $h\leq_a g$ implies
  $h=g$ or $h=1_a$).
Clearly a simple morphism is atomic.

 We  call   $R$ \emph{interval finite} if for any object $a$ and morphism $g\in \lsub{a}{G}$, the set  $\{h\mid h\in \lsub{a}{G}, \Phi_h\subseteq \Phi_g\}$  is finite  (that is, the closed interval $[1_a,g]$ in the  weak order $\leq_a$ on $\lsub{a}{G}$ is finite).
  We say  $(G,\Phi,\Phi^+)$ is  \emph{inversion-set finite} if for any $g\in \Mor   (G)$, $\Phi_g$ is finite. An inversion-set finite signed groupoid set is clearly interval finite.

  Say that  the groupoid $G$ is \emph{generated} by a set $X\subseteq \Mor(G)$ if every non-identity morphism in $G$ is expressible as a composite of morphisms in $X\cup X^{-1}$, where $X^{-1} :=\{g^{-1}\mid g\in G\}$. For $g \in \Mor(G)$,  define the \emph{length} $l_X(g)\in \mathbb{N}$ of $g$ (with respect to $X$) to be $0$ if $g$ is an identity morphism, and otherwise to be the minimum number of factors occurring  in   products of elements of $X\cup X^{-1}$ with value $g$.

  A (not necessarily faithful)   signed groupoid set $R=(G,\Phi,\Phi^+)$  is called \emph{principal} if  the set $S:=\bigcup_{a\in \Ob  (G)}\lsub{a}{S}$  of simple morphisms  generates  $G$ and for all $g\in \Mor(G)$,
  $l(g):=l_S(g)$ satisfies $l(g)=\vert \Phi_g\vert $.  A principal  signed groupoid set is necessarily faithful, since  if $g\in \Mor(G)$ satisifes  $ \Phi_g =\emptyset$,
  then $l(g)=\vert \Phi_g\vert=0$ and so $g$ is an identity morphism.

 If  $G$ is generated by  its  atomic morphisms, we say that $R$ is \emph{atomically generated}.  An interval finite,  faithful  signed groupoid set is called \emph{preprincipal} if for any $g,s\in \lsub{a}{G}$, with $s$ being atomic, one has either $\Phi_g\supseteq \Phi_s$ or $\Phi_g\cap \Phi_s=\emptyset$.

 We say that $R$ is \emph{antipodal} if for each $a\in \Ob(G)$, the weak order $(\lsub{a}{G},\leq_{a})$ has  a maximum element (this notion is the only one from above that  is not already considered  in \cite{rootoid1}--\cite{rootoid2}). In general, a maximum element of $(\lsub{a}{G},\leq_{a})$ will be denoted $\omega_{a}\colon w_{a}\to a$ if it exists.

For completeness, the following lemma states and proves, using the terminology of signed groupoid sets,  some additional facts formulated in  terms  of protorootoids in \cite{rootoid1}.

\begin{lemma}\label{sgsfacts}
Let $R=(G,\Phi,\Phi^+)$ be a  faithful  signed groupoid set.

(a) Let $x\in \lrsub{a}{G}{b}$,   $y\in \lrsub{b}{G}{c}$
  and $w\in \lsub{b}{G}$.
  \begin{itemize}
 \item If $x\leq_a xy$, then $y^{-1}\leq_c y^{-1}x^{-1}$.  \item If $x<_a xy$ and $x<_axw$, then $xy<_axw$ if and only if $y<_bw$.
  \end{itemize}

 (b) The inverse of an atomic morphism  is atomic.

(c) Suppose that  $G$ is  generated by  its  simple morphisms. Then $\vert \Phi_g\vert \leq l(g)$  for all $g\in \Mor(G)$   and thus $R$ is inversion-set finite.

(d) Suppose that  $R$ is  interval finite. Then $R$ is atomically generated.

(e) Let $R$ be interval finite. Suppose  that any atomic morphism of $R$ is simple. Then $R$ is principal.

(f) If $R$ is principal, then it is preprincipal  and every  atomic morphism of $R$ is simple.
\end{lemma}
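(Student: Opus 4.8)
The plan is to derive the whole lemma from the $1$-cocycle identities of Lemma~\ref{sgsfacts1}, translating them through the defining equivalence $g\leq_a h\iff\Phi_g\subseteq\Phi_h$ and using repeatedly that each morphism acts bijectively on roots. For part (a) I would argue entirely with inversion sets. The hypothesis $x\leq_a xy$ reads $\Phi_x\subseteq\Phi_{xy}$, which by Lemma~\ref{sgsfacts1}(c) (with $f:=x$, $g:=y$) is equivalent to $\Phi_{x^{-1}}\cap\Phi_y=\emptyset$; feeding this symmetric condition back into~\ref{sgsfacts1}(c), now with $f:=y^{-1}$ and $g:=x^{-1}$, gives $\Phi_{y^{-1}}\subseteq\Phi_{y^{-1}x^{-1}}$, which is the first bullet. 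For the second bullet, \ref{sgsfacts1}(c) supplies $\Phi_{xy}=\Phi_x\dotcup x\Phi_y$ and $\Phi_{xw}=\Phi_x\dotcup x\Phi_w$; since $x$ acts bijectively and $x\Phi_y,x\Phi_w$ are disjoint from $\Phi_x$, deleting $\Phi_x$ shows $\Phi_{xy}\subseteq\Phi_{xw}\iff\Phi_y\subseteq\Phi_w$, i.e. $xy\leq_a xw\iff y\leq_b w$. Faithfulness (so that inversion-set equality is morphism equality) and injectivity of $\gamma\mapsto x\gamma$ upgrade this to the stated equivalence of strict inequalities.

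The same computation yields two order-theoretic tools. First, whenever $s\leq_a g$ one gets the disjoint decomposition $\Phi_g=\Phi_s\dotcup s\Phi_{s^{-1}g}$, and $k\mapsto sk$ is an order-isomorphism from the weak-order interval below $s^{-1}g$ onto the upper set $\{t\in[1_a,g]\mid t\geq_a s\}$. Second, for any $g\colon b\to a$ the assignments $k\mapsto g^{-1}k$ and $h\mapsto gh$ are mutually inverse bijections between $[1_a,g]$ and $[1_b,g^{-1}]$, well-definedness in each direction being exactly the condition $\Phi_{k^{-1}}\cap\Phi_{k^{-1}g}=\emptyset$ furnished by $k\leq_a g$ (respectively its analogue for $g^{-1}$). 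Part (b) is immediate from the second tool: reading \emph{atomic} as \emph{atom} (so $g\neq 1_a$), one has $[1_a,g]=\{1_a,g\}$, whence its bijective image $[1_b,g^{-1}]$ also has two elements and $g^{-1}$ is atomic. For (c) I would expand a shortest factorization $g=s_1\cdots s_n$ into simple morphisms ($n=l(g)$, using that inverses of simples are simple by~\ref{sgsfacts1}(b)) and apply the subadditivity $\vert\Phi_{fg}\vert\leq\vert\Phi_f\vert+\vert\Phi_g\vert$ read off from \ref{sgsfacts1}(a), obtaining $\vert\Phi_g\vert\leq n=l(g)<\infty$.

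For (d) I would induct on the cardinality $\vert[1_a,g]\vert$, which is finite by interval-finiteness. If $g\neq 1_a$, pick an atom $s$ of $\lsub{a}{G}$ with $s\leq_a g$ (a minimal element of $(1_a,g]$); then $s$ is atomic and, by the first tool, the interval below $s^{-1}g$ is order-isomorphic to the proper subinterval $\{t\in[1_a,g]\mid t\geq_a s\}$, which omits $1_a$ and is therefore strictly smaller. By induction $s^{-1}g$ is a product of atomic morphisms, hence so is $g=s(s^{-1}g)$. This inductive step is the part I expect to be the main obstacle, since one must check the interval genuinely shrinks and carefully track the shifting domains and codomains in the groupoid ($s\colon c\to a$ forces $s^{-1}g\in\lsub{c}{G}$). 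Statement (e) then follows quickly: by (d) and the hypothesis that every atomic morphism is simple, the simple morphisms generate $G$; the inequality $l(g)\leq\vert\Phi_g\vert$ comes from the same atom induction, each step $g=s(s^{-1}g)$ with $\vert\Phi_s\vert=1$ removing exactly one inversion (since $\Phi_g=\Phi_s\dotcup s\Phi_{s^{-1}g}$), and the reverse inequality is (c), giving $l(g)=\vert\Phi_g\vert$.

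Finally, for (f), suppose $R$ is principal, so $R$ is faithful and, by (c), inversion-set finite and hence interval finite. I would first show every atom is simple: take a reduced expression $g=t_1\cdots t_n$ of an atom $g$; additivity of $l$ along the reduced word together with \ref{sgsfacts1}(a) forces equality in the subadditivity bound, whence $\Phi_{t_1}\subseteq\Phi_g$, i.e. $t_1\leq_a g$. As $g$ is an atom and $t_1\neq 1_a$, this gives $t_1=g$, so $g$ is simple. The preprincipal containment-or-disjointness condition is then automatic, since $\vert\Phi_s\vert=1$ makes ``$\Phi_g\supseteq\Phi_s$ or $\Phi_g\cap\Phi_s=\emptyset$'' a tautology for every atomic $s$; combined with the faithfulness and interval-finiteness already noted, this shows $R$ is preprincipal.
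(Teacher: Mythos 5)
Your proposal is correct and follows essentially the same route as the paper's own proof: part (a) via the equivalences of Lemma \ref{sgsfacts1}(c), part (b) from the first assertion of (a), part (c) by subadditivity of inversion sets from Lemma \ref{sgsfacts1}(a) along a shortest factorization, part (d) by induction on the (finite) interval cardinality using the order-isomorphism $k\mapsto sk$ between $[1_b,s^{-1}g]$ and $[s,g]$, part (e) by the same atom induction combined with (c), and part (f) by forcing equality in the subadditivity bound for a reduced expression of an atom and then noting the preprincipal dichotomy is automatic for singleton inversion sets. The only cosmetic differences are your explicit packaging of the two interval bijections as reusable "tools" (the paper rederives them inline) — the underlying computations are identical.
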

\begin{proof}

(a)
 The  two assertions  follow directly from  Lemma \ref{sgsfacts1}(c)  and (b).

 (b) It follows from  the first assertion of  (a).

(c) We prove this by induction (the argument does not require the assumption that $R$ is  faithful).  If $l(g)=0$,  then  $g$ is the identity morphism at some object and thus $\vert \Phi_g\vert =0$.
Suppose  $\vert \Phi_g \vert\leq l(g)$ for $g$ such that $l(g)<n$.  Now assume that $l(g)=n$.  Then $g=s_ns_{n-1}\cdots s_1$ where each $s_i$ is simple. By Lemma \ref{sgsfacts1} (a), \begin{equation*}
\Phi_g=(\Phi_{s_n}\setminus -s_n\Phi_{s_{n-1}\cdots s_1})\,\dotcup  \,(s_n\Phi_{s_{n-1}\cdots s_1}\setminus -\Phi_{s_n}).
\end{equation*} By definition one sees that $l(s_{n-1}\cdots s_1)=n-1$,
Therefore by induction, we have $\vert \Phi_{s_{n-1}\cdots s_1}\vert \leq n-1$.  Note $\vert \Phi_{s_n}\vert =1$. Therefore $\vert \Phi_g\vert \leq n$.

(d) Take a morphism $g\in \lsub{a}{G}$. Denote the cardinality   of the interval $[1_a, g]$ in the weak order at $a$ by $l'(g)$.   We use induction on $l'(g)$ to show that   if $g$ is not an identity morphism, then $g$ is a product of  atomic morphisms.
If $l'(g)=1$ then $\Phi_g=\emptyset$ and $g=1_a$ by faithfulness. Suppose that $g$  is a product of  atomic morphisms if $1<l'(g)<n$. Suppose $l'(g)=n$.  Take an atom  $r\in \lsub{a}{G}$  such that $\Phi_r\subseteq \Phi_g$.   Lemma \ref{sgsfacts1} (c) implies that $\Phi_{r^{-1}}\cap \Phi_{r^{-1}g}=\emptyset$ and  $\Phi_{g}=\Phi_r\dotcup  r\Phi_{r^{-1}g}$.  Let $b$ denote the domain of $r$.  We will show that $l'(r^{-1}g)<l'(g)$ and then the assertion follows from the induction.   To that end,   we show that there exists a bijection between the interval $[1_b, r^{-1}g]$ and the interval $[r,g]$ under the map $h\mapsto rh$.

Since $\Phi_h\subseteq \Phi_{r^{-1}g}$ and $\Phi_{r^{-1}}\cap \Phi_{r^{-1}g}=\emptyset$,  we have   $\Phi_{rh}=\Phi_r\dotcup  r\Phi_h$ by Lemma \ref{sgsfacts1} (c). Hence $\Phi_r\subseteq \Phi_{rh}\subseteq \Phi_{g}=\Phi_r\, \dotcup \, r\Phi_{r^{-1}g}$. Therefore the map is well-defined. It follows from the invertibility of $r$ that the map is injective.
Take $h'\in \lsub{a}{G}$ such that $r\leq_a h'\leq_a g$. Again by Lemma \ref{sgsfacts1}  (c), $\Phi_{h'}=\Phi_r\, \dotcup \, r\Phi_{r^{-1}h'}$. It follows  that $\Phi_{r^{-1}h'}\subseteq \Phi_{r^{-1}g}$. Therefore one sees that the map is surjective. Hence $l'(r^{-1}g)=\vert [r,g]\vert <\vert [1_{a},g]\vert =l'(g)$.

(e) By (d) and the assumption  any  non-identity  morphism of $R$  is a product of  simple morphisms. We have to show that for any morphism $g\in \lsub{a}{G}$, $\vert \Phi_g\vert =l(g)$. We prove this by induction on $\vert \Phi_g\vert $. If $\vert \Phi_g\vert =0$, then $\Phi_g=\emptyset$ and $g$ is  an  identity morphism since $R$ is faithful.
 Therefore $l(g)=0$.   Now assume   inductively   that if $\vert \Phi_g\vert <n$, then  $l(g)=\vert \Phi_g\vert $.
 Consider   $g$ such that $\vert \Phi_g\vert =n>0$. Because $R$ is interval finite, we can take an atomic morphism $r\in \lsub{a}{G}$ such that $r\leq_a g$. By assumption $r$ is simple (and therefore $r^{-1}$ is also simple). We have $\Phi_{g}=\Phi_r\dotcup  r\Phi_{r^{-1}g}$ by Lemma \ref{sgsfacts1}(c)  and therefore $\vert \Phi_{r^{-1}g}\vert =\vert \Phi_g\vert -1$. By induction $\vert \Phi_{r^{-1}g}\vert =l(r^{-1}g)=n-1$.  By the definition of $l$, $l(g)\leq n$. On the other hand, by (c), $n\leq l(g)$.  Therefore the assertion follows.

(f) Let $R$ be a principal signed groupoid set. By (c), a principal signed groupoid set is inversion-set finite, and thus interval finite. We now show that every atomic morphism of $(G,\Phi, \Phi^+)$ is simple. Take $r$ an atomic morphism. If it is not simple, suppose $l(r)=k>1$ and $r=s_ks_{k-1}\cdots s_1$ with $s_i$ simple. Since $R$ is principal, $\vert \Phi_r\vert =k$.  By Lemma \ref{sgsfacts1} (a), \begin{equation*}
\Phi_r=(\Phi_{s_k}\setminus -s_k\Phi_{s_{k-1}\cdots s_1})\dotcup  (s_k\Phi_{s_{k-1}\cdots s_1}\setminus -\Phi_{s_i})
\end{equation*} By definition $l(s_{k-1}\cdots s_1)=k-1$. By (b)
$\vert \Phi_{s_{k-1}\cdots s_1}\vert \leq k-1$.  Therefore this forces $\Phi_{r}=\Phi_{s_k}\dotcup  s_k\Phi_{s_{k-1}\cdots s_1}$. Therefore $\Phi_{s_k}\subseteq \Phi_r$ by Lemma \ref{sgsfacts1}  (c),  a contradiction. Therefore for an atomic morphism $r$ and a morphism $g$ having the same codomain as $r$, either $\Phi_r\subseteq \Phi_g$ or $\Phi_g\cap \Phi_r=\emptyset$, since $\Phi_r$ is a singleton set.
\end{proof}

 \begin{lemma} \label{parinpreprinc}Let $R=(G,\Phi,\Phi^{+})$ be a faithful signed groupoid set,  $X$ be a set of generators of $G$ such that $\{g^{-1}\mid g\in X\}=X$, $a\in \Ob(G)$ and $\alpha\in \lrsub{a}{\Phi}{\mathrm{re}}$. Then there exist some  $y\in \lrsub{b}{G}{a}$ and  $s\in X\cap \lsub{b}{G}$ such that
$y(\alpha)\in \Phi_{s}$. In particular, this applies with $R$
atomically generated  (respectively, preprincipal,  principal) and $X$ equal  to  the set of atomic (respectively, atomic,  simple) morphisms  of $R$.
\end{lemma}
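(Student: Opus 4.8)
The plan is to exploit that $\alpha$ being real forces some morphism to change its sign, and then to locate, along a factorization of that morphism into generators, the \emph{first} generator responsible for a sign change; this generator, together with the appropriate partial product, will supply the required $s$ and $y$. First I would record the sign-flip reformulation of reality. By the definition of $\lrsub{a}{\Phi}{\mathrm{re}}$, if $\alpha>0$ is real then $\alpha\in\Phi_{g}$ for some $g\in\lrsub{a}{G}{d}$, which means $\alpha>0$ at $a$ while $g^{-1}(\alpha)<0$ at $d$; and if $\alpha<0$ is real then $-\alpha\in\Phi_{g}$ for some such $g$, whence, using that the action commutes with the involution, $-g^{-1}(\alpha)=g^{-1}(-\alpha)<0$, so $g^{-1}(\alpha)>0$ at $d$ while $\alpha<0$ at $a$. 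In either case there is a morphism $g\in\lrsub{a}{G}{d}$ for which $\alpha$ and $g^{-1}(\alpha)$ have opposite signs; in particular $g^{-1}$ is not an identity.

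Next, since $X=X^{-1}$ generates $G$, I would factor $g^{-1}=s_{n}\cdots s_{1}$ with each $s_{i}\in X$, where $s_{i}\in\lrsub{a_{i}}{G}{a_{i-1}}$, $a_{0}=a$ and $a_{n}=d$. Setting $y_{0}:=1_{a}$ and $y_{k}:=s_{k}\cdots s_{1}\in\lrsub{a_{k}}{G}{a}$, the roots $y_{0}(\alpha)=\alpha,\ y_{1}(\alpha),\ \dots,\ y_{n}(\alpha)=g^{-1}(\alpha)$ begin and end with opposite signs, so there is a least index $k$ with $y_{k-1}(\alpha)$ and $y_{k}(\alpha)=s_{k}\bigl(y_{k-1}(\alpha)\bigr)$ of opposite sign. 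If $y_{k-1}(\alpha)>0$ and $y_{k}(\alpha)<0$, I take $y:=y_{k-1}\in\lrsub{a_{k-1}}{G}{a}$ and $s:=s_{k}^{-1}\in X\cap\lsub{a_{k-1}}{G}$; then $y(\alpha)>0$ at $a_{k-1}$ and $s^{-1}(y(\alpha))=s_{k}(y_{k-1}(\alpha))=y_{k}(\alpha)<0$, so $y(\alpha)\in\Phi_{s}$, and the conclusion holds with $b=a_{k-1}$. If instead $y_{k-1}(\alpha)<0$ and $y_{k}(\alpha)>0$, I take $y:=y_{k}\in\lrsub{a_{k}}{G}{a}$ and $s:=s_{k}\in X\cap\lsub{a_{k}}{G}$; then $y(\alpha)>0$ at $a_{k}$ and $s^{-1}(y(\alpha))=y_{k-1}(\alpha)<0$, so again $y(\alpha)\in\Phi_{s}$, with $b=a_{k}$.

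Finally, for the ``in particular'' clauses I would check that each hypothesis yields a generating set closed under inversion. When $R$ is atomically generated we take $X$ to be the set of atomic morphisms, which is closed under inverse by Lemma \ref{sgsfacts}(b); when $R$ is preprincipal it is interval finite and hence atomically generated by Lemma \ref{sgsfacts}(d), so the same $X$ applies; and when $R$ is principal the simple morphisms generate $G$ by definition and are closed under inverse by Lemma \ref{sgsfacts1}(b), so $X$ may be taken to be the set of simple morphisms. I expect the only real subtlety to be the bookkeeping: reducing the negative-root case to a sign change, keeping the composition order consistent, and—at the located flip—choosing between $s_{k}$ and $s_{k}^{-1}$ (together with the matching partial product $y_{k}$ or $y_{k-1}$) according to the direction in which the sign turns. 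There is no deep obstacle here; this is essentially the classical ``first time a root is made negative'' argument from Coxeter theory, transported to the groupoid setting.
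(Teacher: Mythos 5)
Your proof is correct and takes essentially the same approach as the paper's: both produce a morphism whose action flips the sign of $\alpha$, factor it into generators from $X=X^{-1}$, locate the first partial product at which the sign changes, and then choose between $s_k$ and $s_k^{-1}$ (with the matching partial product as $y$) according to the direction of the flip. Your explicit check of the ``in particular'' clause via closure of atomic/simple morphisms under inversion (Lemmas \ref{sgsfacts}(b) and \ref{sgsfacts1}(b)) is a harmless elaboration of what the paper leaves implicit.
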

\begin{proof} Since $\alpha$ is a real root, there is some morphism $g$ such that $\alpha$ and $g(\alpha)$ are of opposite sign. Obviously,  $g$ is not an identity morphism, so it is a product $g=s_{n}\cdots s_{1}$ of  morphisms $s_{i}$ in the set $X$. For some $i=1,\ldots,n$, $s_{i}\cdots s_{1}(\alpha)$ and $s_{i-1}\cdots s_{1}(\alpha)$ are of opposite sign.
Hence there is $x\in G$ and a morphism $r$ in $X$ such that
$x(\alpha)$ and $rx(\alpha)$ are of opposite sign.
If $x(\alpha)>0$ and $rx(\alpha)<0$, then $x(\alpha)\in \Phi_{r^{-1}}$ where $r^{-1}$ is in $X$; in this case, we can choose $y=x$ and $s=r^{-1}$. Otherwise, we have
$x(\alpha)=r^{-1}(rx(\alpha))<0$ and $rx(\alpha)>0$, so  $rx(\alpha)\in \Phi_{r}$; in this case, we can take $y=rx$ and $s=r$.
\end{proof}

\subsection{Real compression}   Let $R=(G,\Phi,\Phi_+)$ be any signed groupoid set. Define a preorder (that is, a reflexive, transitive relation) $\preceq_a$ on $\lsub{a}{\Phi}$ by the condition that for $\alpha$ and $\beta$ in $\lsub{a}{\Phi} $, one has $\alpha\preceq_a \beta$ if for all $b\in \Ob(G)$ and $g\in \lrsub{a}{G}{b}$, $g(\beta)\in \lsub{b}{\Phi}^-$ implies  $g(\alpha)\in \lsub{b}{\Phi}^-$. Following \cite{bhaut} (see also \cite{bjornerbrenti}), we call $\preceq_a$ the \emph{dominance preorder} at $a$.
Let $\sim_a$ denote the corresponding equivalence relation on $\lsub{a}{\Phi}$, defined by \begin{equation*}
\alpha\sim_a \beta\iff (\alpha\preceq_a \beta \text{ \rm and } \beta\preceq_a
\alpha),\qquad\text{\rm for $\alpha,\beta\in \lsub{a}{\Phi}$}.
\end{equation*} We call the relation $\sim_a$ \emph{parallelism} on $\lsub{a}{\Phi}$. Thus,
 roots $\alpha,\beta\in \lsub{a}{\Phi}$ are said to be \emph{parallel} if for all $g\in \lsub{a}{G}$, $g^{-1} (\alpha)$ and $g^{-1} (\beta)$ are of the same sign. Note that each of $\tensor*[_{a}]{\Phi}{_{\mathrm{im}}^\pm}$, if non-empty, is a single parallelism class.
 If $R$ contains no  distinct   parallel roots at any object, we call it \emph{compressed}.  Thus, $R$ is compressed if and only if  its dominance preorder at each object is a partial order (called \emph{dominance order}).

 We attach to $R$ another signed groupoid set $R^{\mathrm{rec}}:=(G, \Phi_\mathrm{rec},\Phi_\mathrm{rec}^+)$, called the \emph{real compresssion} of $R$, as follows.
 For $a\in \Ob(G)$, define the definitely involuted set $\lrsub{a}{\Phi}{\mathrm{rec}}$ by   \begin{equation*}
 \lrsub{a}{\Phi}{\mathrm{rec}}:=\lrsub{a}{\Phi}{\mathrm{re}}/\negthinspace \sim_a,\quad -[\alpha]_a:=[-\alpha]_a
\qquad \text{\rm for
 $\alpha\in \lrsub{a}{\Phi}{\mathrm{re}}$}
 \end{equation*}  where $[\alpha]_a$ is the $\sim_a$-equivalence class of $\alpha$, and
 $\tensor*[_{a}]{\Phi}{_{\mathrm{rec}}^+}:=\tensor*[_a]{\Phi}{_{\mathrm{re}}^+}/\negthinspace \sim_a$. The maps
 $\lrsub{b}{G}{a}\times \lrsub{a}{\Phi}{\mathrm{rec}}\to \lrsub{b}{\Phi}{\mathrm{rec}}$ defining the action of $G$ are given by $g[\alpha]_a:=[g\alpha]_b$. It is easy to check that this gives a well-defined signed groupoid set $R^{\mathrm{rec}}$.

 \begin{lemma}\label{preandprincipal}
Let $R$ be a faithful signed groupoids set.
Assume  that $R$ is preprincipal, with $A$ as its set of  atomic morphisms. Then

 (a)  the  parallelism classes of real roots in  $\lsub{a}{\Phi}$ are the sets $x(\Phi_{s})$ where $x\in \lrsub{a}{G}{b}$ and $s\in \lsub{b}{G}\cap A$.

 (b)  $R^{\mathrm{rec}}$ is principal.

(c) if $g$  is  in $\lsub{a}{G}$, then $l_{A}(g) =\vert \Phi_{g}/\negthinspace \sim_{a}\vert$ (the number of parallelism classes  in $\Phi_{g}$).
\end{lemma}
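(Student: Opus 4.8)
The plan is to isolate three facts about parallelism and then deduce (a), (c), (b) in that order, with (b) reducing largely to (c). Throughout write $\sim_a$ for parallelism and $[\alpha]_a$ for classes. \textbf{Fact 1 (equivariance).} For $x\in\lrsub{a}{G}{b}$ and $\gamma,\delta\in\lsub{b}{\Phi}$ one has $\gamma\sim_b\delta$ if and only if $x\gamma\sim_a x\delta$; this follows by unwinding the definition, since $h\mapsto x^{-1}h$ is a bijection $\lsub{a}{G}\to\lsub{b}{G}$ and $h^{-1}(x\gamma)=(x^{-1}h)^{-1}(\gamma)$. \textbf{Fact 2.} Each inversion set $\Phi_g$ with $g\in\lsub{a}{G}$ is a union of $\sim_a$-classes: if $\alpha,\beta\in\lsub{a}{\Phi}^+$ are real with $\alpha\sim_a\beta$, then testing parallelism at $h=g$ gives $\alpha\in\Phi_g\iff\beta\in\Phi_g$. \textbf{Fact 3 (crux).} For atomic $s\in\lsub{b}{G}$, the set $\Phi_s$ is a single full $\sim_b$-class. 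Here preprincipality enters: for every $h\in\lsub{b}{G}$ one has $\Phi_s\subseteq\Phi_h$ or $\Phi_s\cap\Phi_h=\emptyset$, so all elements of $\Phi_s$ lie in exactly the same inversion sets, hence are mutually parallel, giving $\Phi_s\subseteq[\gamma_0]_b$ for any $\gamma_0\in\Phi_s$; conversely, if $\eta\sim_b\gamma_0$ then testing at $h=s$ forces $s^{-1}\eta<0$ while $\eta>0$, so $\eta\in\Phi_s$, whence $[\gamma_0]_b=\Phi_s$.

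For part (a), let $\alpha\in\lrsub{a}{\Phi}{\mathrm{re}}$. Applying Lemma \ref{parinpreprinc} with $X=A$ yields $y\in\lrsub{b}{G}{a}$ and $s\in A\cap\lsub{b}{G}$ with $\gamma_0:=y(\alpha)\in\Phi_s$. Setting $x:=y^{-1}$, Fact 3 gives $[\gamma_0]_b=\Phi_s$, and Fact 1 then yields $[\alpha]_a=[x\gamma_0]_a=x[\gamma_0]_b=x(\Phi_s)$. Conversely, every set $x(\Phi_s)$ with $x\in\lrsub{a}{G}{b}$, $s\in A\cap\lsub{b}{G}$, is a full $\sim_a$-class by Facts 1 and 3. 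This establishes (a).

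For part (c), I would show $l_A(g)=n(g):=|\Phi_g/\negthinspace\sim_a|$ by two inductions paralleling the proof of Lemma \ref{sgsfacts}(e). For the bound $l_A(g)\le n(g)$: by interval finiteness choose an atomic $r\leq_a g$, so Lemma \ref{sgsfacts1}(c) gives $\Phi_g=\Phi_r\dotcup r\Phi_{r^{-1}g}$; by Facts 2 and 3, $\Phi_r$ is a single full class contained in $\Phi_g$, and $r^{-1}$ is parallelism-preserving (Fact 1), so $n(r^{-1}g)=n(g)-1$, and induction gives $l_A(g)\le l_A(r^{-1}g)+1\le n(g)$. For $l_A(g)\ge n(g)$: writing $g=r_k\cdots r_1$ with each $r_i$ atomic, Lemma \ref{sgsfacts1}(a) shows $\Phi_g\subseteq\Phi_{r_k}\cup r_k\Phi_{r_{k-1}\cdots r_1}$, a union of at most $1+(k-1)$ classes by Facts 1 and 3, so $n(g)\le k$. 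Combining yields (c).

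For part (b), I first identify the simple morphisms of $R^{\mathrm{rec}}$. Since the sign of a $\sim_a$-class is well-defined and all inversions are real, $\Phi_g^{\mathrm{rec}}=\Phi_g/\negthinspace\sim_a$, so $g$ is simple in $R^{\mathrm{rec}}$ exactly when $\Phi_g$ is a single class. By Fact 3 every atomic morphism is such; conversely, if $\Phi_g$ is one class, any atomic $r\leq_a g$ has $\Phi_r$ a full class forced to equal $\Phi_g$, so $1_a\neq h\leq_a g$ implies $\Phi_h=\Phi_g$ and hence $h=g$ by faithfulness, making $g$ atomic. Thus the simple morphisms of $R^{\mathrm{rec}}$ are precisely the atomic morphisms $A$ of $R$, which generate $G$ by Lemma \ref{sgsfacts}(d). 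Finally $l_{A}(g)=n(g)=|\Phi_g^{\mathrm{rec}}|$ by (c), so $R^{\mathrm{rec}}$ is principal. The main obstacle is Fact 3, the exact identification of $\Phi_s$ with one full parallelism class, since both inclusions rest on the precise interplay of preprincipality with the inversion-set formalism; the remaining steps are essentially bookkeeping with Lemma \ref{sgsfacts1}.
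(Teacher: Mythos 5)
Your proposal is correct, and while part (a) runs parallel to the paper's argument, your handling of (b) and (c) takes a genuinely different route. For (a), the paper invokes preprincipality tersely ("by definition, the sets $\Phi_{s}$ for $s\in A$ are parallelism classes") and then cites Lemma \ref{parinpreprinc}; your Fact 3 is exactly the verification the paper leaves implicit, and both inclusions of your argument are sound (preprincipality forces all elements of $\Phi_s$ into the same inversion sets, hence mutually parallel; testing at $h=s$ gives the reverse inclusion). Where you diverge is the direction of deduction between (b) and (c). The paper proves (b) first: atomic morphisms of $R$ and $R^{\mathrm{rec}}$ coincide, $R^{\mathrm{rec}}$ is atomically generated by Lemma \ref{sgsfacts}(d), atomic morphisms of $R^{\mathrm{rec}}$ are simple by (a) applied to $R^{\mathrm{rec}}$ (which tacitly uses that preprincipality passes to the real compression, Lemma \ref{JOPlem}(f)), and Lemma \ref{sgsfacts}(e) then gives principality; only afterwards does it obtain (c), first in the principal case from the definition and then in general by invariance of $l_A$ and of class-counts under real compression. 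You invert this: you prove (c) directly by a double induction that replays the proofs of Lemma \ref{sgsfacts}(c) and (e) at the level of parallelism classes (using your Facts 1--3 to see that $n(r^{-1}g)=n(g)-1$ for an atom $r\leq_a g$), and then read off (b) from (c) after identifying the simple morphisms of $R^{\mathrm{rec}}$ with the atomic morphisms of $R$. What each approach buys: the paper's route is shorter because it recycles Lemma \ref{sgsfacts}(e) wholesale, at the price of leaning on transfer statements between $R$ and $R^{\mathrm{rec}}$ that are asserted rather than proved; your route is longer but self-contained, never needs Lemma \ref{JOPlem}(f), and isolates the one non-formal ingredient (Fact 3) explicitly. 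Both arguments are valid.
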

\begin{proof}

(a) By definition of preprincipal signed groupoid sets, the sets $\Phi_{s}$ for $s\in A$ are parallelism classes, and hence so are the sets $x(\Phi_{s})$.
Every real root appears in such a parallelism class by Lemma \ref{parinpreprinc}, and (a) follows.

(b) It is easy to see that $g$ is an atomic morphism of $R$ if and only if $g$ is  an atomic morphism of $R^{\mathrm{rec}}$. Now we show that an atomic morphism $g$ is also simple in $R^{\mathrm{rec}}$, i.e. $\vert \Phi_g\vert =1$ in $R^{\mathrm{rec}}$. Note that $R$ is atomically generated by Lemma \ref{sgsfacts} (d). Therefore $R^{\mathrm{rec}}$ is also atomically generated.   Atomic morphisms are simple in $R^{\mathrm{rec}}$ by (a) applied to $R^{\mathrm{rec}}$.  The result follows from Lemma \ref{sgsfacts} (e).

(c) If $R$ is principal, this follows from the definition of   principalness
since the parallelism classes  of the real roots are singletons  by (a) and Lemma \ref{sgsfacts}(f). It follows for faithful, preprincipal signed groupoid sets since both the length of a groupoid element with respect to the set of atomic generators and  the number of parallelism classes in an inversion set are invariant under real compression.
\end{proof}

\subsection{}  In this subsection, $R=(G,\Phi,\Phi^+)$ denotes a faithful signed groupoid set.   If for all $a\in \Ob  (G)$ ,  the  weak  order  $(\lsub{a}{G},\leq_{a})$ at $a$   is a complete lattice, we say that $R$ is \emph{complete}.

For two morphism $g,h\in \lsub{a}{G}$, if $\Phi_g\cap \Phi_h=\emptyset$,  we   write  $g\perp h$ and say they are \emph{orthogonal}.
 By Lemma \ref{sgsfacts1}(c), orthogonality is expressible in terms of the family of weak orders of $R$ at the objects of $G$.

We say $R$ is \emph{rootoidal} if  for any $a\in \Ob  (G)$,
the weak order $(\lsub{a}{G},\leq_{a})$ is a complete meet semilattice (that is, any of its  non-empty subsets has a meet (greatest lower bound))  and the weak orders satisfy  the following \emph{Join Orthogonality Property (JOP)}:  if $ h, g_i\in \lsub{a}{G}$,  where $ i\in I$, with ${g_i}\perp h$  for all $i$  and  the join (least upper bound) $g=\bigvee_{i\in I}g_{i}$ exists    in weak order at $a$, then $g\perp h$.  (We remark that a subset of a complete meet semilattice has a join if and only if it is bounded above; its  join is then the  meet of the set of  upper bounds of the subset.)

The condition that a signed groupoid set be rootoidal is crucial in extending many basic   facts which hold  for complete signed groupoid sets to non-complete ones. The main reason for mentioning it in this paper (where  our main results concern  complete signed groupoid sets anyway) is to make explicit the fact, which we shall  use several times,  that weak orders in complete signed groupoid sets have the JOP, as (e) of  the following lemma shows.

\begin{lemma}\label{JOPlem} Let $R=(G,\Phi,\Phi^{+})$ denote a faithful signed groupod set.

(a) If $a\in \Ob(G)$, an element $\omega_{a}\in \lsub{a}{G}$ is a maximum element of weak order at $a$ if and only if $\Phi_{\omega_{a}}=\tensor*[_{a}]{\Phi}{_{\mathrm{re}}^{+}}$.

In (b)--(c), we  assume that $R$ is antipodal  and  for each $a\in \Ob(G)$,  let
$\omega_{a}\colon w_{a}\to a$  denote the maximum element in weak order $(\lsub{a}{G},\leq_{a})$.

(b)  For $a\in \Ob(G)$,  $\omega_{w_{a}}=\omega_{a}^{-1}$, $w_{w_{a}}=a$ and  $\omega_{a}(\tensor*[_{w_{a}}]{\Phi}{_{\mathrm{re}}^{+}})=\tensor*[_a]{\Phi}{_{\mathrm{re}}^{-}}$. For any morphism $g\colon a\to b$ in $G$, one has $\Phi_{g\omega_{a}}=\tensor*[_b]{\Phi}{_{\mathrm{re}}^{+}}\setminus \Phi_{g}$.

(c) Define a map $g\mapsto g^{\perp}\colon \lsub{a}{G}\to \lsub{a}{G}$ as follows: if  $g\colon b\to a$, let $g^{\perp}:=g\omega_{b}$. Then  $g\mapsto g^{\perp}$ is an order reversing bijection of $(\lsub{a}{G},\leq_{a})$ with itself, satisfying $(g^{\perp})^{\perp}=g$, $g\vee g^{\perp}=\omega_{a}$ and $g\wedge g^{\perp}=1_{a}$ (where these meets and joins exist even if the weak order at $a$ is not a lattice). For $g,h\in \lsub{a}{G}$, one  has $g\perp h\iff g\leq h^{\perp}$.

(d)  Assume that $R$ is complete. Then $R$ is rootoidal and for each $a\in \Ob(G)$, the weak order at $a$ is a complete ortholattice with maximum element $\omega_{a}:=\bigvee_{g\in \lsub{a}{G}}g$ and  orthocomplement  $g\mapsto g^{\perp}$ defined as in (c).  In particular, $R$ is antipodal.

(e) $R$ is complete if and only if it rootoidal and antipodal.

 (f) $R$ is complete (respectively, rootoidal, antipodal, preprincipal) if and only if    $R^{\mathrm{rec}}$ is complete (respectively, rootoidal,  antipodal, preprincipal).
\end{lemma}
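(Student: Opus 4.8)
The plan is to prove (a)--(f) in sequence, deriving everything from the inversion-set calculus of Lemma \ref{sgsfacts1} together with two elementary facts: that $\Phi_{g}\subseteq \tensor*[_{a}]{\Phi}{_{\mathrm{re}}^{+}}$ for every $g\in \lsub{a}{G}$, and that $\tensor*[_{a}]{\Phi}{_{\mathrm{re}}^{+}}=\bigcup_{g\in \lsub{a}{G}}\Phi_{g}$ (immediate from the definition of a real root as one lying in some inversion set). Part (a) is then almost formal: if $\Phi_{\omega_{a}}=\tensor*[_{a}]{\Phi}{_{\mathrm{re}}^{+}}$ then $\Phi_{g}\subseteq \Phi_{\omega_{a}}$ for all $g\in\lsub{a}{G}$, so $\omega_{a}$ is the maximum; conversely a maximum $\omega_{a}$ dominates every $\Phi_{g}$, whose union is $\tensor*[_{a}]{\Phi}{_{\mathrm{re}}^{+}}$, while $\Phi_{\omega_a}\subseteq\tensor*[_{a}]{\Phi}{_{\mathrm{re}}^{+}}$ gives the reverse inclusion.

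For (b) I would first establish $\omega_{a}(\tensor*[_{w_{a}}]{\Phi}{_{\mathrm{re}}^{+}})=\tensor*[_a]{\Phi}{_{\mathrm{re}}^{-}}$. Using (a), the identities $\Phi_{\omega_{a}}=\tensor*[_{a}]{\Phi}{_{\mathrm{re}}^{+}}$ and $\Phi_{\omega_{a}}=\tensor*[_{a}]{\Phi}{_{\mathrm{re}}^{+}}\cap \omega_{a}(\tensor*[_{w_{a}}]{\Phi}{_{\mathrm{re}}^{-}})$ force $\tensor*[_{a}]{\Phi}{_{\mathrm{re}}^{+}}\subseteq \omega_{a}(\tensor*[_{w_{a}}]{\Phi}{_{\mathrm{re}}^{-}})$; applying the involution (with which $\omega_{a}$ commutes) and using that $\omega_{a}$ restricts to an involution-preserving bijection of real roots gives the reverse containment, hence equality, \emph{without any cardinality argument} so that the possibly infinite case is covered. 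From this and Lemma \ref{sgsfacts1}(b) one computes $\Phi_{\omega_{a}^{-1}}=-\omega_{a}^{-1}\Phi_{\omega_{a}}=\tensor*[_{w_{a}}]{\Phi}{_{\mathrm{re}}^{+}}$, so by (a) the morphism $\omega_{a}^{-1}$ is the maximum at $w_{a}$, giving $\omega_{w_{a}}=\omega_{a}^{-1}$ and $w_{w_{a}}=a$. The formula $\Phi_{g\omega_{a}}=\tensor*[_{b}]{\Phi}{_{\mathrm{re}}^{+}}\setminus \Phi_{g}$ then follows by expanding with Lemma \ref{sgsfacts1}(a): since $g\Phi_{\omega_{a}}=g(\tensor*[_{a}]{\Phi}{_{\mathrm{re}}^{+}})$ and $-g\Phi_{\omega_{a}}=g(\tensor*[_{a}]{\Phi}{_{\mathrm{re}}^{-}})\supseteq \Phi_{g}$, the first difference term vanishes and the second collapses to $\tensor*[_{b}]{\Phi}{_{\mathrm{re}}^{+}}\setminus \Phi_{g}$. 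This bookkeeping, partitioning $\tensor*[_{b}]{\Phi}{_{\mathrm{re}}}$ into the two halves $g(\tensor*[_{a}]{\Phi}{_{\mathrm{re}}^{\pm}})$ and tracking which pieces land in $\tensor*[_{b}]{\Phi}{_{\mathrm{re}}^{+}}$, is the one genuinely fiddly computation, and I expect it to be the main obstacle; everything else is formal.

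Part (c) is a corollary of (b) applied to $g\colon b\to a$, which yields $\Phi_{g^{\perp}}=\tensor*[_{a}]{\Phi}{_{\mathrm{re}}^{+}}\setminus \Phi_{g}$. From this, order-reversal, involutivity (hence bijectivity), $\Phi_{g}\cap \Phi_{g^{\perp}}=\emptyset$ and $\Phi_{g}\cup \Phi_{g^{\perp}}=\Phi_{\omega_{a}}$ are read off directly. Faithfulness converts these into $(g^{\perp})^{\perp}=g$, $g\wedge g^{\perp}=1_{a}$ and $g\vee g^{\perp}=\omega_{a}$, the meet and join existing because any common lower bound has empty inversion set and any common upper bound has inversion set all of $\tensor*[_{a}]{\Phi}{_{\mathrm{re}}^{+}}$; finally $\Phi_{g}\subseteq \tensor*[_{a}]{\Phi}{_{\mathrm{re}}^{+}}$ gives $g\perp h\iff \Phi_{g}\subseteq \tensor*[_{a}]{\Phi}{_{\mathrm{re}}^{+}}\setminus\Phi_{h}\iff g\leq h^{\perp}$.

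For (d), completeness makes each $(\lsub{a}{G},\leq_{a})$ a complete lattice, hence it has a maximum, so $R$ is antipodal and (a)--(c) apply; the complete ortholattice structure is exactly what (c) supplies, and the JOP follows from $g_{i}\perp h\iff g_{i}\leq h^{\perp}$ by taking the join of the $g_{i}$, so $R$ is rootoidal. This also gives the forward implication of (e). For its converse I would invoke the standard order-theoretic fact that a complete meet-semilattice with a top element is automatically a complete lattice (each join being the meet of the non-empty set of upper bounds, which contains $\omega_{a}$, and the empty join being the minimum $1_{a}$); rootoidal provides the meet-semilattice and antipodal the top. Finally, (f) I would reduce to a single observation: because parallel real roots have the same sign under every morphism, each $\Phi_{g}$ is a union of $\sim_{a}$-classes, so the inversion set of $g$ in $R^{\mathrm{rec}}$ is exactly $\Phi_{g}/\negthinspace\sim_{a}$, and $\Phi_{g}\subseteq\Phi_{h}\iff \Phi_{g}/\negthinspace\sim_{a}\,\subseteq\,\Phi_{h}/\negthinspace\sim_{a}$; hence the identity on $\lsub{a}{G}$ is an isomorphism of the weak orders of $R$ and $R^{\mathrm{rec}}$ at every object (in particular $R^{\mathrm{rec}}$ is faithful). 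Since completeness, the complete meet-semilattice property, the JOP (orthogonality being weak-order expressible by Lemma \ref{sgsfacts1}(c)), antipodality, interval-finiteness, and the atom/containment/orthogonality conditions defining preprincipality are all phrased purely in the weak orders, every one of the four properties transfers across this isomorphism, proving all the equivalences at once.
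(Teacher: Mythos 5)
Your proposal is correct and follows essentially the same route as the paper's proof: (a) from the identity $\tensor*[_{a}]{\Phi}{_{\mathrm{re}}^{+}}=\bigcup_{g\in \lsub{a}{G}}\Phi_{g}$, (c) read off from the complement formula $\Phi_{g^{\perp}}=\Phi_{\omega_{a}}\setminus\Phi_{g}$, (d)--(e) via the ortholattice structure, the JOP from $g\perp h\iff g\leq h^{\perp}$, and the standard fact that a complete meet-semilattice with a top element is a complete lattice, and (f) by noting that the weak orders are invariant under real compression (your justification that inversion sets are unions of parallelism classes makes explicit what the paper asserts in one line). The only divergence is internal to (b): you first prove $\omega_{a}(\tensor*[_{w_{a}}]{\Phi}{_{\mathrm{re}}^{+}})=\tensor*[_a]{\Phi}{_{\mathrm{re}}^{-}}$ by complementation inside $\tensor*[_{a}]{\Phi}{_{\mathrm{re}}}$ and then get $\omega_{w_{a}}=\omega_{a}^{-1}$ from (a) plus uniqueness of maxima, whereas the paper first shows $\Phi_{\omega_{a}\omega_{w_{a}}}=\emptyset$ and invokes faithfulness, and you derive $\Phi_{g\omega_{a}}=\tensor*[_b]{\Phi}{_{\mathrm{re}}^{+}}\setminus \Phi_{g}$ from the cocycle identity of Lemma \ref{sgsfacts1}(a) rather than the paper's pointwise sign check; both variants are valid and of the same difficulty.
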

\begin{proof}  (a) This follows from definition of weak order since $\tensor*[_{a}]{\Phi}{_{\mathrm{re}}^{+}}=\bigcup_{g\in \lsub{a}{G}}\Phi_{g}$.

(b) By (a), we have $\omega_{a}^{-1}(\tensor*[_{a}]{\Phi}{_{\mathrm{re}}^{+}})\subseteq \tensor*[_{w_{a}}]{\Phi}{_{\mathrm{re}}^{-}}$ and similarly with $a$ replaced by $w_{a}$.
This implies that  $(\omega_{w_{a}}^{-1}\omega_{a}^{-1})(\tensor*[_{a}]{\Phi}{_{\mathrm{re}}^{+}})\subseteq \tensor*[_{w_{w_{a}}}]{\Phi}{_{\mathrm{re}}^{+}}$. Also, $(\omega_{w_{a}}^{-1}\omega_{a}^{-1})(\tensor*[_{a}]{\Phi}{_{\mathrm{im}}^{+}})\subseteq \tensor*[_{w_{w_{a}}}]{\Phi}{_{\mathrm{im}}^{+}}$.
Hence $\Phi_{\omega_{a}\omega_{w_{a}}}=\emptyset$. Since $R$ is faithful, it follows  that $\omega_{a}\omega_{w_{a}}$ is an identity morphism and so $\omega_{w_{a}}=\omega_{a}^{-1}$.
In particular, $w_{w_{a}}=a$. Equality holds in the first inclusion in the proof of (b) since replacing $a$ by $w_{a}$ in it gives the reverse inclusion. To prove the equality $\Phi_{g\omega_{a}}=\tensor*[_b]{\Phi}{_{\mathrm{re}}^{+}}\setminus \Phi_{g}$, note both sides are contained in $\tensor*[_b]{\Phi}{_{\mathrm{re}}^{+}}$. If $\alpha$ is in this set of roots, one has $(g\omega_{a})^{-1}\alpha<0\iff
g^{-1}(\alpha)>0$, and the equality follows.

(c) By (b), we have $\Phi_{g^{\perp}}=
\Phi_{\omega_{a}}\setminus \Phi_{g}$. Hence the map $g\mapsto g^{\perp}$ is order reversing. It is an involution since $\Phi_{(g^{\perp})^\perp}=\Phi_{\omega_{a}}\setminus \Phi_{g^{\perp}}=\Phi_{g}$ and $R$ is faithful.  One has   $g\vee g^{\perp}=\omega_{a}$ and $g\wedge g^{\perp}=1_{a}$ since $\Phi_{g}\cup \Phi_{g^{\perp}}=\Phi_{\omega_{a}}$ and  $\Phi_{g}\cap \Phi_{g^{\perp}}=\emptyset$. The final assertion of (c) just amounts to $\Phi_{g}\cap \Phi_{h}=\emptyset\iff \Phi_{g}\subseteq \Phi_{\omega_{a}}\setminus \Phi_{h}$, which holds since $\Phi_{g}\subseteq \Phi_{\omega_{a}}$.

(d) Assume $R$ is complete. Then the weak order at $a\in \Ob(G)$ obviously  has maximum element $\omega_{a}:=\bigvee_{g\in \lsub{a}{G}}g$. The properties of the map $\perp$ in (c) show by definition (see \cite{bjornerbrenti})  that
the weak order is a complete ortholattice  with orthocomplement $\perp$.
It remains to prove $R$ is rootoidal.
Certainly each weak order, as a complete lattice, is a complete meet semilattice.  Suppose $a\in \Ob(G)$ and $g_{i},h$ in $\lsub{a}{G}$ satsify  $g_i\perp h$ for all $i$. Then  ${g_i}\leq_{a} h^{\perp}$ for all $i$. Hence the join $g=\bigvee_{i\in I}g_{i}$ satisfies $g\leq_{a}h^{\perp}$.
That is, $g\perp h$, as required to verify the JOP.

(e) The ``only if'' direction follows from (d). Conversely, suppose $R$ is rootoidal and antipodal. Then for any object $a$ of $G$,  the weak order at $a$ is a complete meet semilattice with a maximum element, which implies it is a complete lattice. Hence $R$ is complete by definition.

 (f) The groupoid and its weak orders are preserved under real compression. Hence for any property, such as those in (f),  which is expressible in terms of the groupoid and its weak orders, $R$ has that property if and only if $R^{\mathrm{rec}}$ does.
\end{proof}

 Some additional properties of complete faithful signed groupoid sets in \cite{rootoid1}--\cite{rootoid2}, notably the existence of an analogue of the diagram automorphism of  a finite Coxeter group corresponding to conjugation by the longest element,   can also be  extended to antipodal signed groupoid sets.

 \begin{example}\label{notionsexamp} (1) Consider the signed groupoid set  $R=(G,\lsub{\bullet}{\Phi},\lsub{\bullet}{\Phi}^+)$  from  the standard root system  of  a  Coxeter group
 (Example \ref{coxeterexample}).  It is easily verified to be principal, real and compressed (these conditions
 reduce to well known elementary  properties of  Coxeter groups and their root systems). The weak order at the unique object is the usual weak (right)
order of the Coxeter group, which is  known to be a complete  meet
semilattice (see \cite{bjornerbrenti}).   If $W$ is finite, it is well
known that the longest element $w_0$ is a maximum element of
weak order, which implies  that weak order is a complete lattice. Lemma  \ref{JOPlem}
 implies  that  $R$  is complete, antipodal and   rootoidal.  (Similarly, it follows from results
  in \cite{weylgroupoid5} that the signed groupoid sets discussed in  Example \ref{weylgroupoidexample}  are
  principal, real, compressed, complete, antipodal  and rootoidal.) The JOP for
  infinite  Coxeter groups $W$ is proved in   \cite{DyW}, showing that
  $R$ above is  principal, real, compressed and rootoidal (though not complete) in  general. This
  example and its relation to conjectures in   op. cit.  (which, from the discussion in \ref{convgeom}, are very closely related to various notions of convexity on root systems) provided the first author's principal motivation for the study of rootoids, which underlies this paper.

 (2)  Suppose that $R=(G,\lsub{\bullet}{\Phi},\lsub{\bullet}{\Phi}^+)$ is the  signed groupoid set in  Example \ref{coneexamp} associated to the set of all (real or imaginary roots) of a Kac-Moody Lie algebra with Weyl group $W$.  The real and imaginary roots of $R$ as defined here coincide with the real and imaginary roots in the usual sense of Kac-Moody Lie algebras, and parallelism classes of real roots are singletons. Then  $R^{\mathrm{rec}}$ identifies with the signed groupoid set associated similarly to the subsystem of all real roots, which is isomorphic (as sigend groupoid set) to that in (1) (with possibly infinite $W$).

  (3)  Suppose that $R=(G,\lsub{\bullet}{\Phi},\lsub{\bullet}{\Phi}^+)$ is the  signed groupoid set associated  in a similar way as in (1)  to a non-reduced crystallographic root system of a finite Weyl group $W$ (see \cite{Bour}). There are no imaginary roots and the  parallelism class of  a root is the set of all roots which are positive real  scalar multiplies of it. Then $R_\mathrm{rec}$ is isomorphic to the signed groupoid set attached as in (1) to a reduced root system of $W$.  \end{example}

    Part (f) of the following proposition  will play an important role in the proof of our main result. The crucial points, which characterize simplicial oriented geometries amongst oriented geometries by completeness properties, were  stated  in \cite[Theorem 6.11]{rootoid1}, and are essentially just a translation of results from \cite{hyperplane}. We include a  proof, citing \cite{hyperplane} for the key facts.

\begin{proposition}\label{acyctosgs}
 Let $R=(G,\Phi,\Phi^+)$ be the signed groupoid set $\SGS(A)$ associated to a  preacycloid $A=(E,*,\mathcal{T})$, where $\mathcal{T}\neq \emptyset$, in Example \ref{exampleacycloid}.

 (a) $R$ is finite, faithful,  connected, simply connected, antipodal  and inversion set finite (hence interval finite).

 (b) $R$ is real if and only if $A$ is loopless.

 (c) $R$ is real and compressed if and only if $A$ is simple.
 More generally, the signed groupoid set attached to the  simplification $A^{\circ}$ of $A$ canonically identifies with  the real compression $R^{\mathrm{rec}}$ of $R$.

  (d) $R$ is   preprincipal  if and only if $A$ is  an acycloid.

  (e)  $R$ is real and  principal if and only if $A$ is a simple acycloid.

 (f) Assume that $A$ is the  tope (pre)acycloid of  an oriented matroid $M$.   Then $R$ is finite, faithful, connected, simply connected,   preprincipal and   antipodal.  Further, $R$ is real and principal (equivalently, real and compressed) if and only if $M$ is an oriented geometry.  Finally, $R$ is complete  (or equivalently, rootoidal) if and only if
 $M$ is simplicial.
\end{proposition}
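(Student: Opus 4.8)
The plan is to deduce part (f) from parts (a)--(e) of this proposition together with Lemma \ref{matroidisacycloid} and the lattice-theoretic input of \cite{hyperplane}. Since $A$ is the tope (pre)acycloid of the oriented matroid $M$, Lemma \ref{matroidisacycloid}(a) guarantees that $A$ is an acycloid. First I would read off the structural properties: finiteness, faithfulness, connectedness, simple connectedness and antipodality come directly from (a), while preprincipality follows from (d) applied to the acycloid $A$. For the characterization of real principality I would chain the equivalences already available: by (e), $R$ is real and principal iff $A$ is a simple acycloid, which (as $A$ is an acycloid) is the same as $A$ being simple; by (c), $R$ is real and compressed iff $A$ is simple; and by Lemma \ref{matroidisacycloid}(b), $A$ is simple iff $M$ is an oriented geometry. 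Thus all three conditions coincide, yielding the stated equivalence.

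The substantive content is the final assertion, that $R$ is complete iff $M$ is simplicial. I would first dispose of the parenthetical equivalence of completeness and rootoidality: by Lemma \ref{JOPlem}(e), $R$ is complete iff it is rootoidal and antipodal, and antipodality holds by (a), so the two notions agree here. Next I would reduce to the case of an oriented geometry. By Lemma \ref{JOPlem}(f), $R$ is complete iff $R^{\mathrm{rec}}$ is complete; by (c), $R^{\mathrm{rec}}$ is canonically $\SGS(A^{\circ})$, where $A^{\circ}$ is the simplification of $A$. As established in the proof of Lemma \ref{matroidisacycloid}, $A^{\circ}$ is exactly the tope acycloid of the oriented geometry $M'$ associated to $M$, and by definition $M$ is simplicial precisely when $M'$ is a simplicial oriented geometry. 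Hence it suffices to treat the case where $M$ itself is an oriented geometry, so that $A$ is simple, $L=\emptyset$, and topes coincide with hemispaces.

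In that case I would make the weak orders explicit. Fixing a base hemispace $H$, the objects $\widetilde{K}$ are indexed by hemispaces $K$, the unique morphism $g\colon \widetilde{K}\to\widetilde{H}$ acts as the identity on $E$, and a short computation gives $\Phi_{g}=H\cap K^{*}=H\setminus K$. By faithfulness the assignment $K\mapsto H\setminus K$ is injective, so the weak order at $\widetilde{H}$ is anti-isomorphic to the poset $\{H\cap K\mid K\in\mathfrak{H}\}$ ordered by inclusion; by \ref{convgeom} this is the poset of closed sets of the convex geometry $(H,\cx_{H})$, equivalently the poset of regions of $M$ based at the tope $H$. The final step is to invoke the Bj\"orner--Edelman--Ziegler characterization from \cite{hyperplane}: this poset of regions is a lattice for every base tope $H$ if and only if the oriented geometry is simplicial. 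Since $R$ is finite, each weak order is a complete lattice iff it is a lattice, so $R$ is complete iff $M$ is simplicial, which (via the reduction of the previous paragraph) gives the general statement.

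I expect the main obstacle to be this last step: pinning down the precise identification of the weak order at each object with the poset of regions and matching it to the exact form of the Bj\"orner--Edelman--Ziegler theorem (lattice for \emph{all} base topes $\Longleftrightarrow$ simplicial). The easy implication --- that a simplicial $M$ has every tope simplicial and hence every poset of regions a lattice --- is routine, but the converse is the genuinely nontrivial input and must be quoted carefully from \cite{hyperplane}; care is also needed to verify that the anti-exchange closure facts of \ref{convgeom} license the identification of $\{H\cap K\}$ with the relevant closed-set poset.
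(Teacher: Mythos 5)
Your proposal addresses only part (f): parts (a)--(e) are invoked as known facts, but they are themselves part of the statement to be proved, and in the paper's proof they carry most of the work. In particular, (d) --- the equivalence of preprincipality of $R=\SGS(A)$ with the acycloid axiom (A4) for $A$ --- requires computing the inversion set of the unique morphism $\widetilde K\to \widetilde H$ as $H\setminus K$, identifying the atomic morphisms as exactly those whose inversion set is a single parallelism class $[e]$ with $(H\setminus[e])\cup[e]^{*}\in\mathcal{T}$, and then arguing both directions of (A4); similarly (c) needs the identification of root parallelism in $R$ with element parallelism in $A$, and (e) rests on (d) together with Lemma \ref{sgsfacts}(e). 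None of this appears in your proposal, so as a proof of the proposition as stated it is incomplete: five of the six assertions are taken for granted.

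For part (f) itself your argument is correct and follows essentially the paper's route: the structural assertions come from (a), (d), (e) and Lemma \ref{matroidisacycloid}; completeness $\iff$ rootoidality comes from Lemma \ref{JOPlem}(e) together with antipodality; and the final equivalence is the Bj\"orner--Edelman--Ziegler theorem, cited in the paper as \cite[Theorems 6.3 and 6.5]{hyperplane}. Two remarks. First, your reduction to the simple case via Lemma \ref{JOPlem}(f) and part (c) is a genuine (if small) gain in explicitness: the paper passes directly to ``the oriented geometry'' without spelling out that loops and parallel elements leave the weak orders unchanged. Second, your identification of $\{H\cap K\mid K\in\mathfrak{H}\}$ with ``the poset of closed sets of the convex geometry $(H,\cx_{H})$'' is false: that family is in general a proper subposet of the closed sets. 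For instance, take three vectors $a,b,c=a+b$ in the plane, $E=\{\pm a,\pm b,\pm c\}$ and $H=\{a,b,c\}$; then $\{c\}$ is $\cx_{H}$-closed but is not of the form $H\cap K$ for any hemispace $K$. Fortunately this claim is not needed anywhere: the weak order at $\widetilde H$, being inclusion of the inversion sets $H\setminus K$, is by definition the Bj\"orner--Edelman--Ziegler poset of regions with base tope $H$, so their theorem applies to it directly, and the concern you raise in your final paragraph evaporates once the closed-set detour is dropped.
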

\begin{proof}
(a) Note that $R$ is  connected and simply connected by definition.

Suppose that $H,K\in \mathcal{T}$ and $u\colon \widetilde K\to \widetilde H$ is a morphism in $G$. Then \begin{equation*}
\Phi_u=(H\cup L^+)\cap -(K\cap L^+)=H\cap K^*=H\setminus K.
\end{equation*}
If $\Phi_u=\emptyset$, then $H\subseteq K$. Since $H\dotcup H^*=K\dotcup K^*=E\setminus L$, this forces $H=K$ and  $\widetilde H= \widetilde K$, proving $R$ is faithful.
This also shows  that  that the maximum element of weak order at $\widetilde H$ is the morphism $m:\widetilde{H^*}\to \widetilde{H}$, in $G$, since $\Phi_m=H\supseteq  \Phi_u$ for all $u$ as above.

Since $R$ is finite, it is inversion set finite and therefore interval finite  by subsection \ref{sgsterm}.

(b) From above, we see that the set of  positive imaginary roots of $R$ at any object $\widetilde H$ of $G$ is $L^+$, and that $L$ is  therefore the set of all imaginary roots.

(c) By (b), we may assume for the proof of the first assertion of (c) that $A$ is loopless (that is, $L=\emptyset$) and $R$ is real. From   the definitions and the above description of inversions sets, one sees  that for any $H\in \mathcal{T}$,   $\alpha,\beta\in \lsub{\widetilde{H}}\Phi$
are parallel in $\lsub{\widetilde H}{\Phi}$ if and only if they are parallel as elements of $E$ in the acycloid $A$.  We sketch the (essentially trivial)  proof of the second assertion.  To  form $R^{\mathrm{rec}}$, one discards its imaginary roots and identifies parallelism classes of real roots to singletons. To form $A^{\circ}$, one discards  loops of $A$ and identifies the parallelism classes of non-loops in $A$ to singletons. It is straightforward to check from the definitions and facts above that the construction of the associated signed groupoid set   is compatible with  these corresponding  deletions and identifications.

(d)  From the proof of (a), the morphism $u\colon \widetilde K\to \widetilde H$ in $G$ has inversion set $\Phi_u =H\cap K^*=H\setminus K$. By definition, then, the morphism $u\colon \widetilde K\to \widetilde H$ is atomic in $R$ if and only if $K\neq H$ and   for each $J\in \mathcal{T}$, $H\cap J^*\subseteq H\cap K^*$ implies
$H\cap J^*=\emptyset$  or $H\cap J^*=H\cap K^*$.
Note \begin{equation*}
J=(J\cap H)\cup (J\cap H^*)=(H\setminus (H\cap J^*))\cup
(H\cap J^*)^*
\end{equation*} and similarly for $K$.
Therefore, $u$ is atomic if and only if  $H\neq K$ and    the only $J\in \mathcal{T}$ which satisfy $H\cap J^*\subseteq H\cap K^*$ are $J=H$ and $J=K$.

 For any $J\in \mathcal{T}$,  $H\cap J^*$ is a union of parallelism classes for $A$ which are contained  in $E\setminus L$. So if  $e\in H$ and $K=(H\setminus [e])\cup [e]^*\in \mathcal{T}$, then $u$ is atomic, since $\Phi_u=[e]$ is a single such parallelism class.
The argument below will show that if $A$ is an acycloid or $R$ is preprincipal, every atomic  morphism $u$ in $R$ so arises.

Assume first  that $A$ is an acycloid. By (a),  $R$ is  interval finite. Suppose that $u\colon \widetilde K\to \widetilde H$ in $G$ is atomic (so $H\neq K$). Choose by (A4) some $e\in H\setminus K$ such that $J:=(H\setminus [e])\cup[e]^*$. Then $H\setminus J=[e]\subseteq H\setminus K$,
which implies that $K=(H\setminus [e])\cup [e]^*$ and $\Phi_u=[e]$.
 Now let  $u\colon \widetilde K\to \widetilde H$ be any atomic morphism, and  $v\colon \widetilde J\to \widetilde H$ be any morphism. The inversion set  $\Phi_v=J\cap H^*$  is a union of parallelism classes, while $\Phi_u$ is a single parallelism class,  so either $\Phi_u\supseteq  \Phi_v$ or $\Phi_u\cap \Phi_v=\emptyset$. This  shows $R$ is preprincipal.

% , it remains to show that  any non-identity morphism  $v\colon \widetilde J\to \widetilde H$ in $G$ is a product of atomic morphisms.   Let $n(v)\in \mathbb{N}$ denote the number of parallelism classes of $E$ which are  contained in $\Phi_v$. If $n(v)=0$, then $v$ is an identity morphism.
% Suppose $n(v)>0$. Choose an atomic morphism $u\colon \widetilde K\to \widetilde H$ with $\Phi_u\subseteqeq \Phi_v$.
% Then $v=u(u^{-1}v)$ where $\Phi_v=\Phi_u\dotcup u(\Phi_{u^{-1}v})$. Since the set of parallelism classes in $E$ is  stable under  the action of groupoid elements, it follows that  $n(u^{-1}v)=n(v)-1$. By induction, $u^{-1} v$ is a product of atomic morphisms (or an identity), so $v$ is a product of atomic morphisms. This completes the proof that $R$ is preprincipal if $A$ is an acycloid.

 Conversely, suppose that $R$ is preprincipal.
 Suppose $u\colon \widetilde K\to \widetilde H$ is atomic.
 By definition of  preprincipalness, for any $v\colon \widetilde  J\to \widetilde H$ in $G$, either $\Phi_v\cap \Phi_u=\emptyset$ or $\Phi_u\subseteq \Phi_v$. That is, for any $J\in \mathcal{T}$,
one has either $(H\cap J^*)\cap (H\cap K^*)=\emptyset$  or $H\cap K^*\subseteq H\cap J^*$ (that is, either
$ J^*\cap (H\cap K^*)=\emptyset$  or $J^*\supseteq  H\cap K^*$).
This together with the definition of the parallelism of the roots  implies that $H\cap K^*$ is a single parallelism class:
$H\cap K^*=[e]$ for any   $e\in H\cap K^*\subseteq E\setminus L$, and so $K=(H\setminus [e])\cup [e]^*$.

Now we check axiom (A4) for acycloids.  Let $H$, $J$ be in $\mathcal{T}$  with $H\neq J$. Let $v\colon \widetilde J\to \widetilde H$  in $G$. Then there is some  atom $u\colon \widetilde K\to \widetilde H $ in $\lsub{\widetilde H}{G}$ so $\Phi_u\subseteq \Phi_v$.  That is, $H\setminus K\subseteq H\setminus J$. Choose any $e\in H\setminus K$.
We have $K=(H\setminus[e])\cup[e]^*\in \mathcal{T}$ where $e\in H\setminus J$, as required. This completes the proof of (d).

For the proof of (e), one may assume by (c) that $A$ is simple. The result may be proved using (d) and Lemma  \ref{sgsfacts}(e).

 We prove (f). Its second sentence follows from (a), (d), (e) and   Lemmas \ref{matroidisacycloid} and \ref{preandprincipal}.   The completeness   of $R$ is equivalent to $R$ being rootoidal by Lemma \ref{JOPlem}(e).  Finally, $R$ is complete if and only if, in the terminology of \cite{hyperplane}, the poset of regions (or topes), oriented from any fixed base region, is a (complete) lattice; this holds if and only if the oriented geometry is simplicial by   \cite[Theorem 6.3 and 6.5]{hyperplane}
 \end{proof}

\section{Generalized Brink-Howlett construction of signed groupoid sets}\label{gbhc}
In this section we will discuss the generalized Brink-Howlett construction for signed groupoid sets with certain properties. We will show that the construction preserves those favorable properties. This will play an important role in  the proof and applications of   our main theorem in the next section.

 In this section,
$R=(G,\Phi,\Phi^+)$ is a faithful signed groupoid set, frequently satisfying additional stated conditions.

\begin{definition}\label{squaredef}
A \emph{square} of $R$ is a quadruple $(x,w,y,z)$ of morphisms  of $G$   such that $xw=yz$ and $x(\Phi_w)=\Phi_y$:
$$\begin{CD}
a @>w>> b\\
@VVzV @VVxV\\
c @>y>> d
\end{CD}$$
\end{definition}

\begin{example}\label{squareexamp} Assume that $R$ is antipodal, and let $x\colon b\to d$ be a morphism. Then $(x,w,y,z)=(x,x^{-1}\omega_{d}, x^{\perp}, (x^{\perp})^{-1}\omega_{d})$ is a square.
For one has  $x(x^{-1}\omega_{d})=\omega_{d}= x^{\perp} ((x^{\perp})^{-1}\omega_{d})$,   and, by
Lemma \ref{JOPlem},  \begin{equation*}
x(\Phi_{x^{-1}\omega_{d}})=x(\tensor*[_{b}]{\Phi}{_{\mathrm{re}}^{+}}\setminus \Phi_{x^{-1}})=\tensor*[_{d}]{\Phi}{_{\mathrm{re}}^{+}}\setminus \Phi_{x}=\Phi_{x^{\perp}}
\end{equation*}
(where the second equality  follows by checking that $x(\tensor*[_{b}]{\Phi}{_{\mathrm{re}}^{+}}\setminus \Phi_{x^{-1}})\subseteq\tensor*[_{d}]{\Phi}{_{\mathrm{re}}^{+}}\setminus \Phi_{x}$ and $\tensor*[_{b}]{\Phi}{_{\mathrm{re}}^{+}}\setminus \Phi_{x^{-1}}\supseteq  x^{-1}(\tensor*[_{d}]{\Phi}{_{\mathrm{re}}^{+}}\setminus \Phi_{x})$).
\end{example}

\begin{remark} Let $H$ be a category (often a groupoid in applications).  Suppose given two functors $F_{1},F_{2}\colon H\to G$  and a natural transformation $\eta\colon F_{1}\to F_{2}$. We say that $\eta$ is a square natural transformation if for each morphism $f\colon p\to q$ in $H$, the commutative diagram
$$\begin{CD}
F_{1}p @>F_{1}f>> F_{1}q\\
@VV{\eta_{p}}V @VV\eta_{q}V\\
F_{2}p @>F_{2}f>> F_{2}q
\end{CD}$$ from the definition of a natural transformation gives a
 square $(\eta_{g}, F_{1}f,F_{2}f,\eta_{p})$. The properties of
 squares given below imply that there is a subcategory of the
 category of functors $H\to G$ with all objects but only square
 natural transformations as morphisms.  This observation underlies  basic constructions in the theory of functor rootoids,
 which has been a principal motivation for the development of the theory of rootoids surveyed  in \cite{rootoid1}--\cite{rootoid2}
 and for  the approach in  this paper.
 \end{remark}

\begin{lemma}\label{add}
$(x,w,y,z)$ is a square of $R$ if and only if $xw=yz$,  $\Phi_{x^{-1}}\cap \Phi_w=\emptyset$, $\Phi_x\cap \Phi_y=\emptyset$, $\Phi_{z}\cap \Phi_{y^{-1}}=\emptyset$ and $ \Phi_{z^{-1}}\cap \Phi_{w^{-1}}=\emptyset$.
\end{lemma}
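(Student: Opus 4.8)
The plan is to set $g:=xw=yz$ and to express everything in terms of the single inversion set $\Phi_g$ together with the translates $\Phi_x,\Phi_y,x\Phi_w,y\Phi_z$, using Lemma~\ref{sgsfacts1}. The main engine is part (c) of that lemma, which says that for composable $f,h$ one has $\Phi_{f^{-1}}\cap\Phi_h=\emptyset$ iff $\Phi_{fh}=\Phi_f\dotcup f\Phi_h$. Applying this with $(f,h)=(x,w)$ shows that the condition $\Phi_{x^{-1}}\cap\Phi_w=\emptyset$ is equivalent to the decomposition $\Phi_g=\Phi_x\dotcup x\Phi_w$, and with $(f,h)=(y,z)$ that $\Phi_z\cap\Phi_{y^{-1}}=\emptyset$ is equivalent to $\Phi_g=\Phi_y\dotcup y\Phi_z$. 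These two decompositions of $\Phi_g$ carry most of the argument, and both directions of the equivalence reduce to comparing them.

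For the forward direction, I would start from $x\Phi_w=\Phi_y$. Since $\Phi_y$ consists of positive roots at $d$, so does $x\Phi_w$; feeding this positivity into the formula of Lemma~\ref{sgsfacts1}(a) for $\Phi_{xw}$ collapses the two set differences and yields $\Phi_g=\Phi_x\dotcup x\Phi_w=\Phi_x\dotcup\Phi_y$ at once. The disjointness built into this union is exactly $\Phi_x\cap\Phi_y=\emptyset$, and the decomposition gives $\Phi_{x^{-1}}\cap\Phi_w=\emptyset$ through Lemma~\ref{sgsfacts1}(c). Since $\Phi_y\subseteq\Phi_g$, the same lemma gives $\Phi_z\cap\Phi_{y^{-1}}=\emptyset$ and the second decomposition $\Phi_g=\Phi_y\dotcup y\Phi_z$; comparing it with $\Phi_g=\Phi_x\dotcup\Phi_y$ forces $y\Phi_z=\Phi_x$. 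The last condition is obtained by transport: using Lemma~\ref{sgsfacts1}(b) one rewrites $\Phi_{w^{-1}}=-w^{-1}\Phi_w$ and $\Phi_{z^{-1}}=-z^{-1}\Phi_z$, and the identities $x\Phi_w=\Phi_y$, $y\Phi_z=\Phi_x$ give $w^{-1}\Phi_w=g^{-1}\Phi_y$ and $z^{-1}\Phi_z=g^{-1}\Phi_x$; hence $\Phi_{z^{-1}}\cap\Phi_{w^{-1}}=\emptyset$ is the image under the bijection $g^{-1}$ (composed with the involution) of $\Phi_x\cap\Phi_y=\emptyset$.

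For the converse, I would assume $xw=yz$ together with the four disjointness conditions. The first and third produce, via Lemma~\ref{sgsfacts1}(c), the two decompositions $\Phi_g=\Phi_x\dotcup x\Phi_w=\Phi_y\dotcup y\Phi_z$. The condition $\Phi_x\cap\Phi_y=\emptyset$ combined with $\Phi_y\subseteq\Phi_g=\Phi_x\dotcup x\Phi_w$ gives $\Phi_y\subseteq x\Phi_w$. For the reverse inclusion I would apply the bijection $g^{-1}$ to both decompositions, using $g^{-1}x=w^{-1}$ and $g^{-1}y=z^{-1}$ to obtain $g^{-1}\Phi_g=g^{-1}\Phi_x\dotcup w^{-1}\Phi_w=g^{-1}\Phi_y\dotcup z^{-1}\Phi_z$. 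The condition $\Phi_{z^{-1}}\cap\Phi_{w^{-1}}=\emptyset$ reads $w^{-1}\Phi_w\cap z^{-1}\Phi_z=\emptyset$ after stripping the involution, and together with the second of these decompositions it forces $w^{-1}\Phi_w\subseteq g^{-1}\Phi_y$; applying $g$ and using $gw^{-1}=x$ yields $x\Phi_w\subseteq\Phi_y$. The two inclusions give $x\Phi_w=\Phi_y$, which together with the hypothesis $xw=yz$ is the definition of a square.

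The step I expect to be the main obstacle is the condition $\Phi_{z^{-1}}\cap\Phi_{w^{-1}}=\emptyset$, the only one of the four that lives at the source object $a$ rather than being directly readable off the composite $g$ at $d$. Handling it cleanly depends on two bookkeeping points: first, recognizing $\Phi_{w^{-1}}$ and $\Phi_{z^{-1}}$ as the $g^{-1}$-translates $g^{-1}\Phi_y$ and $g^{-1}\Phi_x$ by means of Lemma~\ref{sgsfacts1}(b); and second, keeping track of the sign-change involution, since $\Phi_{w^{-1}},\Phi_{z^{-1}}$ are positive whereas $w^{-1}\Phi_w,z^{-1}\Phi_z$ are negative, so one passes between the two formulations using that the involution and the $G$-action are commuting bijections. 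Everything else is a routine rearrangement of the two decompositions of $\Phi_g$.
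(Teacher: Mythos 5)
Your proof is correct, and its converse half is in substance identical to the paper's: both extract the two decompositions $\Phi_g=\Phi_x\dotcup x\Phi_w=\Phi_y\dotcup y\Phi_z$ from Lemma \ref{sgsfacts1}(c), deduce $\Phi_y\subseteq x\Phi_w$ from $\Phi_x\cap\Phi_y=\emptyset$, and use the fourth disjointness to force the reverse inclusion --- the paper phrases that last step as an elementwise contradiction (a common root $\beta\in x\Phi_w\cap y\Phi_z$ would give $-w^{-1}x^{-1}\beta\in\Phi_{w^{-1}}\cap\Phi_{z^{-1}}$), which is exactly your set-level transport by $g^{-1}$ read one root at a time. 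Where you genuinely diverge is the forward direction. The paper checks the first two disjointnesses directly from positivity of $x(\Phi_w)=\Phi_y$ and $x^{-1}(\Phi_y)=\Phi_w$, then proves $\Phi_z\cap\Phi_{y^{-1}}=\emptyset$ by a two-stage sign-chasing contradiction (analyzing the root $wz^{-1}(\alpha)$ against the already-established disjointness $\Phi_w\cap\Phi_{x^{-1}}=\emptyset$), and dismisses the fourth condition as ``similar.'' You instead collapse Lemma \ref{sgsfacts1}(a) using positivity of $x\Phi_w$ to get $\Phi_g=\Phi_x\dotcup\Phi_y$ in one stroke, read off the first three conditions from Lemma \ref{sgsfacts1}(c) and the built-in disjointness, and obtain the fourth by transporting $\Phi_x\cap\Phi_y=\emptyset$ under the bijection $\alpha\mapsto -g^{-1}(\alpha)$, via $\Phi_{w^{-1}}=-g^{-1}\Phi_y$ and $\Phi_{z^{-1}}=-g^{-1}\Phi_x$. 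This is tidier and more uniform: it makes the two directions of the equivalence mirror images of each other, eliminates all element-chasing from the forward direction, and makes visible the structural reason the four conditions come in two $g^{-1}$-related pairs; the paper's version, by contrast, needs no appeal to part (a) of Lemma \ref{sgsfacts1} but pays for it with ad hoc case analysis.
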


\begin{proof}
Assume that $(x,w,y,z)$ is a square of a signed groupoid set $R=(G,\Phi,\Phi^+)$. Since $x(\Phi_w)=\Phi_y\subseteq \lsub{d}{\Phi}^+$, $\Phi_{x^{-1}}\cap \Phi_w=\emptyset$.  Similarly since $x^{-1}(\Phi_y)=\Phi_w\subseteq \lsub{b}{\Phi}^+$, $\Phi_x\cap \Phi_y=\emptyset$.

Suppose $\alpha\in \Phi_{z}\cap \Phi_{y^{-1}}$. We show that $wz^{-1}(\alpha)\in \lsub{b}{\Phi}^-$.  Otherwise $wz^{-1}(\alpha)\in \lsub{b}{\Phi}^+$,   $w^{-1}wz^{-1}(\alpha)=z^{-1}(\alpha)\in \lsub{a}{\Phi}^-$ and $xwz^{-1}(\alpha)=y(\alpha)\in \lsub{d}{\Phi}^-$. This contradicts the fact $\Phi_{w}\cap \Phi_{x^{-1}}=\emptyset$.  Now $-y(\alpha)\in \Phi_y$ and $-wz^{-1}(\alpha)\in \lsub{b}{\Phi}^+\setminus \Phi_w$.  But this contradicts $x^{-1}\Phi_y=\Phi_w$ as $x^{-1}(-y(\alpha))=-wz^{-1}(\alpha)$.  Hence $\Phi_{z}\cap \Phi_{y^{-1}}=\emptyset$.

Similarly one can prove that $\Phi_{z^{-1}}\cap \Phi_{w^{-1}}=\emptyset$.

Conversely assume that $xw=yz$, $\Phi_{x^{-1}}\cap \Phi_w=\emptyset$, $\Phi_x\cap \Phi_y=\emptyset$, $\Phi_{z}\cap \Phi_{y^{-1}}=\emptyset$ and  $\Phi_{z^{-1}}\cap \Phi_{w^{-1}}=\emptyset$.
We need to show that $x(\Phi_w)=\Phi_y$.  Since $xw=yz$, $\Phi_{x^{-1}}\cap \Phi_w=\emptyset$ and $ \Phi_{z}\cap \Phi_{y^{-1}}=\emptyset$, by Lemma \ref{sgsfacts1} (c) we have $\Phi_x\dotcup  x\Phi_w=\Phi_y\dotcup  y\Phi_z$. Since $\Phi_x\cap \Phi_y=\emptyset$,  $\Phi_y\subseteq x\Phi_w$. Suppose there exists $\beta\in x\Phi_w\cap y\Phi_z$.
Then $z^{-1}y^{-1}(\beta)=w^{-1}x^{-1}(\beta)<0$.  So $-z^{-1}y^{-1}(\beta)=-w^{-1}x^{-1}(\beta)>0$ and this root is contained in $\Phi_{w^{-1}}\cap \Phi_{z^{-1}}$,  a contradiction. Hence $x\Phi_w\cap y\Phi_z=\emptyset$. So we conclude that $\Phi_y=x\Phi_w$.
\end{proof}

\begin{lemma}\label{squaresym} Let  $x$, $y$, $z$ and $w$ be morphisms of $G$.

(a)   $(x,w,y,z)$ is a square if and only if $(y,z,x,w)$ is a square.

(b) $(x,w,y,z)$ is a square if and only if $(w, z^{-1}, x^{-1}, y)$ is a square.\end{lemma}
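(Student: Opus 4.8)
The plan is to derive both equivalences from the symmetric reformulation of the square condition supplied by Lemma \ref{add}, rather than working directly from Definition \ref{squaredef}. The point is that the defining conditions $xw=yz$ and $x(\Phi_w)=\Phi_y$ look one-sided, whereas Lemma \ref{add} shows them to be equivalent to the manifestly balanced list consisting of the composite relation $xw=yz$ together with the four disjointness conditions $\Phi_{x^{-1}}\cap\Phi_w=\emptyset$, $\Phi_x\cap\Phi_y=\emptyset$, $\Phi_z\cap\Phi_{y^{-1}}=\emptyset$ and $\Phi_{z^{-1}}\cap\Phi_{w^{-1}}=\emptyset$. Both parts then reduce to checking that applying Lemma \ref{add} to the transformed quadruple produces exactly this same list, after trivial rearrangements.

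For part (a), I would apply Lemma \ref{add} to $(y,z,x,w)$. Its composite relation reads $yz=xw$, which is literally the relation $xw=yz$, and its four disjointness conditions are $\Phi_{y^{-1}}\cap\Phi_z=\emptyset$, $\Phi_y\cap\Phi_x=\emptyset$, $\Phi_w\cap\Phi_{x^{-1}}=\emptyset$ and $\Phi_{w^{-1}}\cap\Phi_{z^{-1}}=\emptyset$. Using only that intersection of sets is symmetric, these coincide term by term with the four conditions for $(x,w,y,z)$, so the two quadruples satisfy the criterion of Lemma \ref{add} simultaneously, and part (a) follows.

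For part (b), I would apply Lemma \ref{add} to $(w,z^{-1},x^{-1},y)$, read as $(x',w',y',z')$ with $x'=w$, $w'=z^{-1}$, $y'=x^{-1}$, $z'=y$; one first checks that the domains and codomains match up, which is routine. Its composite relation reads $wz^{-1}=x^{-1}y$, which is equivalent to $xw=yz$ by left-multiplying by $x$ and right-multiplying by $z$ in the groupoid $G$. The four disjointness conditions for this quadruple are $\Phi_{w^{-1}}\cap\Phi_{z^{-1}}=\emptyset$, $\Phi_w\cap\Phi_{x^{-1}}=\emptyset$, $\Phi_y\cap\Phi_{(x^{-1})^{-1}}=\emptyset$ and $\Phi_{y^{-1}}\cap\Phi_{(z^{-1})^{-1}}=\emptyset$; using $(x^{-1})^{-1}=x$, $(z^{-1})^{-1}=z$ and symmetry of intersection, these are precisely the four conditions for $(x,w,y,z)$. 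Hence $(w,z^{-1},x^{-1},y)$ meets the hypotheses of Lemma \ref{add} exactly when $(x,w,y,z)$ does.

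There is essentially no obstacle here: once Lemma \ref{add} is in hand the argument is pure bookkeeping, the only genuine computation being the groupoid rearrangement $xw=yz\Leftrightarrow wz^{-1}=x^{-1}y$ in part (b). The conceptual content---that the ostensibly asymmetric root condition $x(\Phi_w)=\Phi_y$ is equivalent to a fully symmetric family of disjointness relations---has already been absorbed into Lemma \ref{add}, which is exactly what makes these symmetries transparent.
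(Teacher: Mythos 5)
Your proposal is correct and takes exactly the paper's approach: the paper's entire proof of this lemma is ``This follows immediately from Lemma \ref{add},'' and your argument simply spells out the routine bookkeeping (matching the four disjointness conditions up to symmetry of intersection, plus the groupoid rearrangement $xw=yz\Leftrightarrow wz^{-1}=x^{-1}y$ in part (b)) that the paper leaves implicit.
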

\begin{proof}  This follows immediately from Lemma \ref{add}\end{proof}
The above lemma  implies that the dihedral group of order $8$ acts naturally
on the squares of $G$ (and less precisely, on the characterizations of a fixed square; for example, Lemma \ref{squaresym} (a) implies that
$(x,w,y,z)$ is a square if and only if $yz=xw$ and $y(\Phi_z)=\Phi_x$.

\begin{lemma}\label{lemma:squareequiv}
$(x,w,y,z)$ is a square of $R$ if and only if $xw=yz$, $x\vee y=xw$ and $x^{-1}\vee w=x^{-1}y$.  (Here the joins are taken with respect to  the weak order of the morphisms at the corresponding object  and  exist in this special situation even without any assumption that the weak orders are lattices.)
\end{lemma}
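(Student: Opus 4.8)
The plan is to derive both implications directly from the combinatorial characterization of squares in Lemma \ref{add}, combined with the equivalences of Lemma \ref{sgsfacts1}(c), which translate a weak-order relation $g\leq gh$ into the disjointness statement $\Phi_{g^{-1}}\cap\Phi_h=\emptyset$ together with the decomposition $\Phi_{gh}=\Phi_g\dotcup g\Phi_h$. First I would record, from $xw=yz$, the identity $x^{-1}y=wz^{-1}$, so that $x^{-1}\vee w=x^{-1}y$ is genuinely a join of two morphisms at the object $b$ that is the common codomain of $x^{-1}$ and $w$, while $x\vee y=xw$ is a join at the object $d$.

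For the ``if'' direction, I would note that the hypotheses $x\vee y=xw$ and $x^{-1}\vee w=x^{-1}y$ yield, merely from the fact that a join is an upper bound, the four weak-order inequalities $x\leq xw$, $y\leq yz$, $x^{-1}\leq x^{-1}y$ and $w\leq wz^{-1}$. Applying Lemma \ref{sgsfacts1}(c) to each of these, with the evident choice of factorization, produces exactly the four disjointness conditions $\Phi_{x^{-1}}\cap\Phi_w=\emptyset$, $\Phi_{y^{-1}}\cap\Phi_z=\emptyset$, $\Phi_x\cap\Phi_y=\emptyset$ and $\Phi_{w^{-1}}\cap\Phi_{z^{-1}}=\emptyset$ that appear in Lemma \ref{add}. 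Since $xw=yz$ is assumed, Lemma \ref{add} then immediately gives that $(x,w,y,z)$ is a square.

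For the ``only if'' direction, I would begin from a square $(x,w,y,z)$, so that by Lemma \ref{add} the same four disjointness conditions hold along with $xw=yz$. Each disjointness condition, via Lemma \ref{sgsfacts1}(c), yields both one of the inequalities $x\leq xw$, $y\leq yz=xw$, $x^{-1}\leq x^{-1}y$, $w\leq wz^{-1}=x^{-1}y$ and a disjoint decomposition of the relevant inversion set. The essential extra ingredient is the defining relation $x(\Phi_w)=\Phi_y$ of a square, which rewrites these decompositions as $\Phi_{xw}=\Phi_x\dotcup\Phi_y$ and, using $x^{-1}(\Phi_y)=\Phi_w$, as $\Phi_{x^{-1}y}=\Phi_{x^{-1}}\dotcup\Phi_w$. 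This is precisely what upgrades ``$xw$ is an upper bound of $x$ and $y$'' to ``$xw$ is the least upper bound'': any common upper bound $u$ of $x$ and $y$ satisfies $\Phi_u\supseteq\Phi_x\cup\Phi_y=\Phi_{xw}$, whence $xw\leq u$ and so $x\vee y=xw$, and the identical argument at $b$ gives $x^{-1}\vee w=x^{-1}y$.

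I do not anticipate a serious obstacle, since everything reduces to bookkeeping once Lemmas \ref{add} and \ref{sgsfacts1}(c) are available; the only point needing care is the leastness step of the reverse implication, where one must invoke the square's defining equation $x(\Phi_w)=\Phi_y$ to identify $\Phi_{xw}$ with $\Phi_x\cup\Phi_y$ rather than merely with $\Phi_x\cup x\Phi_w$. Keeping straight which join lives at which object, and applying Lemma \ref{sgsfacts1}(c) four times with the correct choice of the two composable morphisms, is the main thing to execute correctly.
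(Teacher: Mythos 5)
Your proof is correct and follows essentially the same route as the paper's: both directions reduce to Lemma \ref{add} via Lemma \ref{sgsfacts1}(c), with the defining relation $x(\Phi_w)=\Phi_y$ used to rewrite $\Phi_{xw}=\Phi_x\dotcup x\Phi_w$ as $\Phi_x\dotcup\Phi_y$ (and similarly $\Phi_{x^{-1}y}=\Phi_{x^{-1}}\dotcup\Phi_w$), which is exactly what makes $xw$ and $x^{-1}y$ least upper bounds rather than mere upper bounds. Your write-up actually makes explicit the leastness step that the paper leaves implicit, so no gaps remain.
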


\begin{proof}
Suppose that $(x,w,y,z)$ is a square. By Definition \ref{squaredef} and Lemma \ref{add}, we have  $\Phi_{x^{-1}}\cap \Phi_w=\emptyset$.  So one has $\Phi_{xw}=\Phi_x\dotcup  x\Phi_w=\Phi_x\dotcup  \Phi_y$.  Hence $x\vee y=xw$. The equality $x^{-1}\vee w=x^{-1}y$ can be proven similarly.
Conversely assume $(x,w,y,z)$ has the properties: $xw=yz$, $x\vee y=xw$ and $x^{-1}\vee w=x^{-1}y$.
Note that $\Phi_x\subseteq \Phi_{x\vee y}$. So $\Phi_x\subseteq \Phi_{xw}$.  By Lemma \ref{sgsfacts1} (c) $\Phi_{x^{-1}}\cap \Phi_w=\emptyset$.  Similarly from $x^{-1}\vee w=x^{-1}y$ we have $\Phi_x\cap \Phi_y=\emptyset$.  By $yz=y\vee x$ one obtains $\Phi_{y^{-1}}\cap \Phi_z=\emptyset$. By $wz^{-1}=w\vee x^{-1}$ one obtains $\Phi_{w^{-1}}\cap \Phi_{z^{-1}}=\emptyset$. Then again by Definition \ref{squaredef} and Lemma \ref{add}  this implies that $x\Phi_w=\Phi_y$.
\end{proof}

The following lemma is straightforward from the  definitions and Lemma \ref{add}.

\begin{lemma}\label{lemma:composition}
 Let $x,x',y,v,w,z,z'$ be morphisms of $R$. Suppose $xw=yz$,  $x'w'=y'x$. If any two of $(x,w,y,z), (x',w',y',x), (x',w'w,y'y,z)$ are squares, then the third is also a square.
$$\begin{CD}
c @>w>> d  @>w'>> a\\
@VVzV @VVxV @VVx'V\\
e @>y>> f @>y'>>  b
\end{CD}$$
\end{lemma}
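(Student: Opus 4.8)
The plan is to reduce everything to the root-system condition of Definition~\ref{squaredef}. First I would record that the three relevant composition equalities are consistent: given the hypotheses $xw=yz$ and $x'w'=y'x$, the third equality is automatic, since $x'(w'w)=(x'w')w=(y'x)w=y'(xw)=y'(yz)=(y'y)z$. Thus in each of the three quadruples the morphism equality holds for free, and being a square reduces to the single identity on inversion sets: $x(\Phi_w)=\Phi_y$ for $(x,w,y,z)$, $x'(\Phi_{w'})=\Phi_{y'}$ for $(x',w',y',x)$, and $x'(\Phi_{w'w})=\Phi_{y'y}$ for $(x',w'w,y'y,z)$. The whole proof then becomes a computation with these identities using Lemma~\ref{sgsfacts1}(a), together with the fact (action axiom (iii)) that each morphism acts as a sign-commuting bijection and so commutes with $\dotcup$ and with set difference.

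The key clean case is when the two small squares are given and the large one is to be deduced. Here I would simply apply $x'$ to the expansion of $\Phi_{w'w}$ furnished by Lemma~\ref{sgsfacts1}(a), namely $\Phi_{w'w}=(\Phi_{w'}\setminus -w'\Phi_w)\dotcup(w'\Phi_w\setminus -\Phi_{w'})$, obtaining
\begin{equation*}
x'(\Phi_{w'w})=(x'\Phi_{w'}\setminus -(x'w')\Phi_w)\dotcup((x'w')\Phi_w\setminus -x'\Phi_{w'})=(\Phi_{y'}\setminus -y'\Phi_y)\dotcup(y'\Phi_y\setminus -\Phi_{y'})=\Phi_{y'y},
\end{equation*}
where the middle equality substitutes the two given square identities $x'\Phi_{w'}=\Phi_{y'}$ and $(x'w')\Phi_w=y'(x\Phi_w)=y'\Phi_y$ (using $x'w'=y'x$), and the last is Lemma~\ref{sgsfacts1}(a) applied to $y'y$. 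This settles the direction ``two small squares imply the large one'' with no obstruction.

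The remaining two directions are the genuinely cancellative ones, where one small square and the large square are given and the other small square must be recovered. I expect these to be the main obstacle, because the naive set-theoretic cancellation fails: the operation $V\mapsto(\Phi_{y'}\setminus -V)\dotcup(V\setminus -\Phi_{y'})$ is not injective on arbitrary sign-disjoint sets, so one cannot simply ``subtract'' the known square. My plan is to avoid this difficulty entirely by reducing both cancellative cases to the clean case above via inversion of one column of the diagram. Concretely, to recover $(x,w,y,z)$ from $(x',w',y',x)$ and $(x',w'w,y'y,z)$ I would paste along the reconfigured diagram with top row $c\xrightarrow{w'w}a\xrightarrow{w'^{-1}}d$ and bottom row $e\xrightarrow{y'y}b\xrightarrow{y'^{-1}}f$: its left square is exactly $(x',w'w,y'y,z)$, its right square is $(x,w'^{-1},y'^{-1},x')$, and its composite square is $(x,w,y,z)$. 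Using Lemma~\ref{squaresym}(a) then (b), one checks that $(x,w'^{-1},y'^{-1},x')$ is a square precisely when $(x',w',y',x)$ is (it lies in the same dihedral orbit), so the previous clean case applies and yields $(x,w,y,z)$. The third direction is handled symmetrically by inverting the other column, where the reconfigured small square $(z,w^{-1},y^{-1},x)$ is again shown to be a square from $(x,w,y,z)$ by Lemma~\ref{squaresym}(a) then (b). Thus the only real content is the single computation above, with the dihedral symmetry of squares doing the rest; the bookkeeping to confirm the compositions in the reconfigured diagrams match is routine.
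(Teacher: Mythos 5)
Your proposal is correct, and its overall architecture (prove the pasting case ``two small squares imply the composite square,'' then recover the cancellative cases by symmetry) matches the paper's; but the execution differs at both stages in a way worth noting. For the pasting case, the paper does not expand $\Phi_{w'w}$ via Lemma~\ref{sgsfacts1}(a) at all: it first replaces each square condition by its transpose under Lemma~\ref{squaresym}(a), namely $y(\Phi_z)=\Phi_x$ and $y'(\Phi_x)=\Phi_{x'}$, after which the composite condition is a one-line chain $(y'y)(\Phi_z)=y'(y\Phi_z)=y'(\Phi_x)=\Phi_{x'}$, with no disjoint unions to manage. This choice also dissolves the obstruction you identified in the cancellative directions: in the transposed formulation, ``subtracting'' the known square is literally cancelling the bijection $y'$ acting on roots --- from $(y'y)(\Phi_z)=\Phi_{x'}=y'(\Phi_x)$ one applies $y'^{-1}$ to get $y(\Phi_z)=\Phi_x$, and the third case is symmetric --- so the non-injectivity of $V\mapsto(\Phi_{y'}\setminus -V)\dotcup(V\setminus -\Phi_{y'})$ is an artifact of working with the untransposed identity $x(\Phi_w)=\Phi_y$, not an intrinsic difficulty. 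The paper accordingly dismisses the remaining cases with ``can be proved similarly (or reduced to this one using symmetry properties of squares)''; your column-inversion pasting argument is a correct and fully detailed implementation of the parenthetical alternative (and your dihedral bookkeeping checks out: $(x,w'^{-1},y'^{-1},x')$ and $(z,w^{-1},y^{-1},x)$ are indeed the images of $(x',w',y',x)$ and $(x,w,y,z)$ under Lemma~\ref{squaresym}(a) followed by (b), and the composite squares in your reconfigured diagrams are exactly the quadruples to be recovered). What your route buys is an explicit, checkable reduction using only the symmetry lemma; what the paper's route buys is that all three directions become essentially trivial simultaneously, at the cost of noticing the right characterization to transpose to.
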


\begin{proof}
Assume that $(x,w,y,z)$ and $ (x',w',y',x)$ are squares.
 Then $xw=yz$ and $y(\Phi_z)=\Phi_x$, and $x'w'=y'x$ and $y'(\Phi_x)=\Phi_{x'}$.  Hence
 $x'w'w=y'xw=y'yz$ and  $(y'y)(\Phi_z
)=y'(y\Phi_z)=y'(\Phi_x)=\Phi_x'$. Therefore $(x',w'w,y'y,z)$ is a square.
The other assertions can be proved similarly (or  reduced to this one using symmetry properties of squares).
\end{proof}

\begin{lemma}\label{lemma:squarejoin}
 Assume that $R$ is complete.  Suppose that $(x,w_i,y_i,z_i), i\in I$   is  a family of squares. Then there exists a square $(x,w,y,z)$ with $w=\bigvee_iw_i$ and $y=\bigvee_iy_i$.
$$\begin{CD}
a @>w>> b\\
@VVzV @VVxV\\
c @>y>> d
\end{CD}
\,\,\,\,\,\,\,
\begin{CD}
a_i @>w_i>> b\\
@VVz_iV @VVxV\\
c_i @>y_i>> d
\end{CD}$$
\end{lemma}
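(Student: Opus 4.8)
The plan is to set $w := \bigvee_{i} w_{i}$ and $y := \bigvee_{i} y_{i}$, which exist in the weak orders at $b$ and $d$ respectively because $R$ is complete, and then to define $z := y^{-1}xw$, so that $xw = yz$ holds by construction. It then remains only to verify that $(x,w,y,z)$ is a square, and by Lemma~\ref{lemma:squareequiv} it suffices for this to check the two join identities $x \vee y = xw$ and $x^{-1} \vee w = x^{-1}y$.

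The main tool I would use is that left multiplication by a fixed morphism is an order isomorphism on a suitable up-set. For $x \colon b \to d$, I would first record that for $g \in \lsub{b}{G}$ one has $\Phi_{x^{-1}} \cap \Phi_{g} = \emptyset$ if and only if $x \leq_{d} xg$, and that in this case $\Phi_{xg} = \Phi_{x} \dotcup x\Phi_{g}$, both by Lemma~\ref{sgsfacts1}(c). From this disjointness it follows, exactly as in the bijection used in the proof of Lemma~\ref{sgsfacts}(d), that $g \mapsto xg$ restricts to an order isomorphism from $U := \{\, g \in \lsub{b}{G} \mid \Phi_{g} \cap \Phi_{x^{-1}} = \emptyset \,\}$ onto the interval $[x,\omega_{d}]$ in $\lsub{d}{G}$, with inverse $h \mapsto x^{-1}h$; in particular it carries any join formed inside $U$ to the corresponding join inside $[x,\omega_{d}]$ (and each such join agrees with the one computed in the ambient poset, by the usual sub-poset argument). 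The analogous statement holds with $x$ replaced by $x^{-1} \colon d \to b$, giving an order isomorphism from $V := \{\, h \in \lsub{d}{G} \mid \Phi_{h} \cap \Phi_{x} = \emptyset \,\}$ onto $[x^{-1},\omega_{b}]$.

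Completeness then enters decisively. Since each $(x,w_{i},y_{i},z_{i})$ is a square, Lemma~\ref{add} gives $\Phi_{w_{i}} \cap \Phi_{x^{-1}} = \emptyset$ and $\Phi_{y_{i}} \cap \Phi_{x} = \emptyset$, so $w_{i} \in U$ and $y_{i} \in V$. As $R$ is complete it is rootoidal by Lemma~\ref{JOPlem}(d), so the Join Orthogonality Property applies: from $w_{i} \perp x^{-1}$ for all $i$ we obtain $w = \bigvee_{i} w_{i} \perp x^{-1}$, that is $w \in U$, and likewise $y \in V$. Hence $w$ is also the join of the $w_{i}$ computed inside $U$ and $y$ the join of the $y_{i}$ inside $V$. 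Transporting these joins through the two isomorphisms, and using both the square identities $xw_{i} = x \vee y_{i}$, $x^{-1}y_{i} = x^{-1} \vee w_{i}$ from Lemma~\ref{lemma:squareequiv} and the complete-lattice identity $\bigvee_{i}(x \vee y_{i}) = x \vee \bigvee_{i} y_{i}$, yields
\begin{equation*}
xw = \bigvee_{i} xw_{i} = \bigvee_{i}(x \vee y_{i}) = x \vee y,
\qquad
x^{-1}y = \bigvee_{i} x^{-1}y_{i} = \bigvee_{i}(x^{-1} \vee w_{i}) = x^{-1} \vee w.
\end{equation*}
These are exactly the two conditions of Lemma~\ref{lemma:squareequiv}, so $(x,w,y,z)$ is a square.

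The step I expect to be the main obstacle is verifying that the join $w = \bigvee_{i} w_{i}$ again lies in the domain $U$ of the left-multiplication isomorphism (and dually that $y \in V$); without this one cannot push the join through $g \mapsto xg$, since the identity $\Phi_{xg} = \Phi_{x} \dotcup x\Phi_{g}$ fails outside $U$. This is precisely the content of the Join Orthogonality Property, and it is the one place where completeness, rather than mere antipodality or faithfulness, is genuinely needed.
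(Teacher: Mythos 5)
Your proof is correct and follows essentially the same route as the paper's: define $w=\bigvee_i w_i$, $y=\bigvee_i y_i$, $z=y^{-1}xw$, use Lemma~\ref{add} and the JOP (via Lemma~\ref{JOPlem}) to get $x^{-1}\perp w$, establish $xw=\bigvee_i xw_i$, and conclude via $xw=\bigvee_i(x\vee y_i)=x\vee y$ and Lemma~\ref{lemma:squareequiv}. The only cosmetic difference is that you package the key step $xw=\bigvee_i xw_i$ as transport of joins along the order isomorphism $g\mapsto xg$ from $\{g\mid g\perp x^{-1}\}$ onto $[x,\omega_d]$, whereas the paper runs the equivalent inequality chase directly using Lemma~\ref{sgsfacts}(a); the underlying content is identical, and you correctly identify the JOP as the point where completeness (rather than antipodality alone) is needed.
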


\begin{proof}
Note that since $R$ is complete, the join of a set of morphisms at a given object always exists.  Let $\bigvee_iw_i=w$ and $\bigvee_iy_i=y$.   We let $z=y^{-1}xw$.
 Next we  show that $xw=\bigvee_i xw_i$.
 Note $x^{-1}\perp w_i$ for all $i$ by Lemma \ref{add}. So by JOP (since $R$ is rootoidal) $x^{-1}\perp w$. So $x\leq xw=x\bigvee_i w_i$ by  Lemma \ref{sgsfacts1} (c).

So by Lemma \ref{sgsfacts} (a), $xw_i\leq xw$ and hence $\bigvee_i xw_i\leq xw$.  On the other hand, $xw_i\leq \bigvee_i xw_i$.  Also we have $x\leq xw_i\leq \bigvee_i xw_i$.  So by Lemma \ref{sgsfacts} (a) again $w_i\leq x^{-1}\bigvee_i xw_i$. So $w=\bigvee_i w_i\leq x^{-1}\bigvee_i xw_i$. By Lemma \ref{sgsfacts} (a) again $xw=x\bigvee_iw_i\leq \bigvee_i xw_i$. Therefore $xw=\bigvee_i xw_i$.

Now by Lemma \ref{lemma:squareequiv} we only need to show that $x\vee y=xw$ and $x^{-1}\vee w=x^{-1}y$.
Note that $xw=\bigvee_i xw_i=\bigvee_i (x\vee y_i)=x\vee \bigvee_i y_i=x\vee y$.  The second identity can be proved similarly.
\end{proof}

\begin{lemma}\label{lem:basicconstruction} Assume that $R$ is finite and   complete.
Suppose $x,y\in \lsub{d}{G}$ with $x\perp y$.  Then there exists a unique  morphism  $y'$ such that $y\leq y'$, $(x,w',y',z')$ is a square for some $w',z'$ and if $(x,w'',y'',z'')$ is a square with $y
\leq y''$, then $y'\leq y''$.
$$\begin{CD}
a @>w'>> b\\
@VVz'V @VVxV\\
c @>y'>> d
\end{CD}
\,\,\,\,\,\,\,
\begin{CD}
a' @>w''>> b\\
@VVz''V @VVxV\\
c' @>y''>> d
\end{CD}$$
\end{lemma}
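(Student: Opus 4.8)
The plan is to identify $y'$ as the minimum of the set $\mathcal D:=\{u\in\lsub{d}{G}\mid y\le_d u$ and $(x,w,u,z)$ is a square for some $w,z\}$; uniqueness of such a minimum is automatic, so the content is existence. First I would record a convenient description of the possible ``bottom'' morphisms of squares with right vertical $x$. Writing $x^\perp$ for the orthocomplement of Lemma \ref{JOPlem}(c) (available since a complete $R$ is antipodal by Lemma \ref{JOPlem}(d)), the claim is that $(x,w,u,z)$ is a square for some $w,z$ if and only if $x\perp u$ and $\Phi_{x\vee u}=\Phi_x\dotcup\Phi_u$. Indeed, given these two conditions one checks directly from Definition \ref{squaredef}, Lemma \ref{add} and Lemma \ref{sgsfacts1}(c) that $(x,\,x^{-1}(x\vee u),\,u,\,u^{-1}(x\vee u))$ is a square, while the converse inclusions are immediate from the square axioms. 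In particular, by Example \ref{squareexamp} the morphism $x^\perp$ is such a bottom, and $x\perp y$ gives $y\le_d x^\perp$ by Lemma \ref{JOPlem}(c), so $x^\perp\in\mathcal D$ and $\mathcal D\neq\emptyset$.

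Next I would extract the key ``retraction'' identity: if $u$ is any bottom as above, then $(x\vee u)\wedge x^\perp=u$. One inclusion is clear since $u\le_d x\vee u$ and $u\le_d x^\perp$. For the other, $(x\vee u)\wedge x^\perp\le_d x\vee u$ and $\le_d x^\perp$ force its inversion set into $(\Phi_x\dotcup\Phi_u)\cap(\Phi_{\omega_d}\setminus\Phi_x)=\Phi_u$ (using $\Phi_{x^\perp}=\Phi_{\omega_d}\setminus\Phi_x$ from Lemma \ref{JOPlem}(b),(c)), whence $(x\vee u)\wedge x^\perp\le_d u$ by faithfulness. This already yields minimality of the explicit candidate $y':=(x\vee y)\wedge x^\perp$: clearly $y\le_d y'$, and if $u\in\mathcal D$ then $y\le_d u$ gives $x\vee y\le_d x\vee u$, so combining monotonicity of $(-)\wedge x^\perp$ with the retraction identity applied to $u$ yields $y'=(x\vee y)\wedge x^\perp\le_d(x\vee u)\wedge x^\perp=u$. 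Thus $y'$ is a lower bound for $\mathcal D$, and it only remains to prove $y'\in\mathcal D$, i.e. that $y'$ is itself the bottom of a square with right vertical $x$.

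By the first paragraph this amounts to showing $\Phi_{x\vee y'}=\Phi_x\dotcup\Phi_{y'}$. A short lattice computation gives $x\vee y'=x\vee y=:p$, and since $\Phi_{y'}\subseteq\Phi_p\cap\Phi_{x^\perp}=\Phi_p\setminus\Phi_x$ always holds, the assertion is equivalent to the single equality $\Phi_{p\wedge x^\perp}=\Phi_p\cap\Phi_{x^\perp}$; equivalently (via De Morgan for the orthocomplement) to the statement that $(x\vee y)^\perp$ is again a bottom in the above sense. This is the main obstacle, and it is genuine content: in these weak orders $g\mapsto\Phi_g$ preserves neither joins nor meets, so a meet does not compute the intersection of inversion sets in general, and the equality must use the special shape $p=x\vee y$ with $y\perp x$ rather than merely $x\le_d p$.

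I expect to clear this obstacle by producing the required square from the two squares already in hand — the trivial one with total $x$ and bottom $1_d$, and the maximal one $(x,x^{-1}\omega_d,x^\perp,(x^\perp)^{-1}\omega_d)$ of Example \ref{squareexamp} with total $\omega_d$ and bottom $x^\perp\ge_d y'$ — by cutting the latter down to total $p$. Concretely, I would glue on an auxiliary square using the composition Lemma \ref{lemma:composition} and then use the join Lemma \ref{lemma:squarejoin} together with the Join Orthogonality Property (valid since $R$ is rootoidal by Lemma \ref{JOPlem}(d)) to control the resulting joins. The reformulation of ``square'' purely in terms of weak‑order joins in Lemma \ref{lemma:squareequiv} should make the bookkeeping tractable, reducing everything to identities among $x$, $p$ and $\omega_d$ in the complete ortholattice $(\lsub{d}{G},\le_d)$; once $y'\in\mathcal D$ is established, the minimality proved above finishes the argument.
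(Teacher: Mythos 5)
Your first two paragraphs are correct: the characterization of the possible bottoms (morphisms $u$ with $x\perp u$ and $\Phi_{x\vee u}=\Phi_x\dotcup\Phi_u$), the retraction identity $(x\vee u)\wedge x^{\perp}=u$ for such $u$, and the conclusion that $y':=(x\vee y)\wedge x^{\perp}$ is a lower bound for your set $\mathcal D$ all hold. But the step you postpone to the final paragraph --- that $y'\in\mathcal D$, equivalently that $\Phi_{(x\vee y)\wedge x^{\perp}}=\Phi_{x\vee y}\setminus\Phi_x$, equivalently that $(x\vee y)^{\perp}$ is itself a bottom --- is not just the hard part: it is \emph{false}, so no gluing of squares can establish it. Take $R$ to be the signed groupoid set of the finite Coxeter group $W$ of type $A_3$ (Example \ref{notionsexamp}(1)), which is faithful, finite and complete, with positive roots $e_i-e_j$ ($1\le i<j\le 4$). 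Let $x$ be the element with $\Phi_x=\{e_1-e_2,\,e_1-e_3\}$ and $y$ the element with $\Phi_y=\{e_3-e_4\}$; these inversion sets are disjoint, so $x\perp y$. The join is forced to contain $e_1-e_4=(e_1-e_3)+(e_3-e_4)$, and one checks $\Phi_{x\vee y}=\{e_1-e_2,\,e_1-e_3,\,e_1-e_4,\,e_3-e_4\}$, while $\Phi_{x^{\perp}}=\{e_1-e_4,\,e_2-e_3,\,e_2-e_4,\,e_3-e_4\}$. Now $\Phi_{x\vee y}\cap\Phi_{x^{\perp}}=\{e_1-e_4,\,e_3-e_4\}$ is not an inversion set (any inversion set containing $e_1-e_4$ must contain $e_1-e_2$ or $e_2-e_4$), so the meet $(x\vee y)\wedge x^{\perp}$ is the element with inversion set $\{e_3-e_4\}$, namely $y$ itself --- and $y$ is not the bottom of any square with right edge $x$, since $\Phi_{x\vee y}\neq\Phi_x\dotcup\Phi_y$. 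The true minimum supplied by the lemma is the element $y''$ with $\Phi_{y''}=\{e_1-e_4,\,e_2-e_4,\,e_3-e_4\}$, for which indeed $\Phi_{x\vee y''}=\Phi_x\dotcup\Phi_{y''}$. Note also that $y$ is a fixed point of your operation $u\mapsto(x\vee u)\wedge x^{\perp}$, so iterating your construction does not repair it.

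The example also shows \emph{why} your plan cannot be patched: the minimal square has total $x\vee y''$, which is strictly larger than $p=x\vee y$ (it contains $e_2-e_4$), so the minimal bottom is not computable by ortholattice identities among $x$, $y$ and $\omega_d$ at the single object $d$ --- in particular the strategy of ``cutting the maximal square down to total $p$'' is doomed, because $p$ is not the total of any square containing $y$ below $x$. Producing $y''$ requires information at the domain $b$ of $x$, where the positive system is different. That is exactly what the paper's proof does: it runs the ``zig-zag'' $x\vee y_i=xw_i$, $x^{-1}\vee w_i=x^{-1}y_{i+1}$, alternating a join at $d$ with a join at $b$, uses finiteness to stabilize, notes that a stable pair satisfies precisely the two identities of Lemma \ref{lemma:squareequiv} (hence gives a square), and proves minimality by an easy induction along the iteration. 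In the example above, a single zig-zag step already carries $y$ to $y''$, past the (non-bottom) fixed point of your candidate. So your paragraphs one and two can be kept as correct observations, but the existence argument must be replaced by the two-sided iteration.
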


\begin{proof}
Assuming the existence, the uniqueness of such a morphism is evident. Now we construct explicitly such a $y'$.
Let $y_0=y$. Then we define a sequence $\{y_i\},\{w_i\}$ as follows:
$$x\vee y_i=xw_i,\qquad x^{-1}\vee w_i=x^{-1}y_{i+1}.$$
We verify  that $x\perp y_i$, $x^{-1}\perp w_i$, $y_0\leq y_1\leq y_2\ldots$ and $w_0\leq w_1\leq w_2\leq \ldots$.
 Note that $x\leq x\vee y_i=xw_i$ so $x^{-1}\perp w_i$ by Lemma \ref{sgsfacts1} (c). Similarly $x^{-1}\leq x^{-1}\vee w_i=x^{-1}y_{i+1}$. So   $x\perp y_{i+1}$.
The inequalities follow from the fact $\Phi_{y_i}\subseteq x\Phi_{w_i}$ and $\Phi_{w_i}\subseteq x^{-1}\Phi_{y_{i+1}}$.
By finiteness   of $G$, for sufficiently large $i$, $y_i$ and $w_i$ stabilize. We take $y':=y_i$, $w':=w_i$  for large $i$  and $z':=y'^{-1}xw'$. Then by Lemma \ref{lemma:squareequiv}, $(x,w',y',z')$ is a square. Assume $(x,w'',y'',z'')$ is a square with $y_0=y\leq y''$.  Then  $xw_0=x\vee y_0\leq x\vee y''=xw''$. Since $x^{-1}\perp w_0,w''$, we have $w_0\leq w''$ by Lemma \ref{sgsfacts} (a).
Now we show  by induction that $w_n\leq w''$ and $y_n\leq y''$ for any $n$.   Taking $n$ sufficiently large will then show  that
$w'\leq w''$ and $y'\leq y''$.
 Assume that $w_i\leq w''$ and $y_i\leq y''$.  Note that \begin{equation*}
x^{-1}y_{i+1}=x^{-1}\vee w_i\leq x^{-1}\vee w''=x^{-1}y''.
\end{equation*} Since $x\perp y_{i+1},y''$,  we have  $y_{i+1}\leq y''$ by Lemma \ref{sgsfacts} (a).  Similarly,
 \begin{equation*}
 xw_{i+1}=x\vee y_{i+1}\leq x\vee y''=xw''.
 \end{equation*}  Since $x^{-1}\perp w_{i+1},w''$, we have $w_{i+1}\leq w''$ by Lemma \ref{sgsfacts} (a). Then we readily see $y'\leq y''$.
\end{proof}
We will  henceforward  denote $y'$ in the above lemma  as  $\square_yx$.

\begin{remark} \label{complete1}The above lemma  is a key   step in the proof of our main theorem and has important  consequences (adjointness properties) we don't go into here. The construction used in its proof  is a simple instance of what was referred to in \cite{rootoid2} as the ``zig-zag construction''; there is also a closely related ``loop construction''  mentioned there (corresponding to the situation above  when $b=d$ and one additionally requires $y'=w'$) which we do not use in this paper.
 Under completeness assumptions as imposed here, both can be regarded as  special cases (corresponding to a groupoid generated by a single arrow, either not  a loop or  a loop respectively) of a non-standard construction of certain adjoint functors in the context of fibered categories, as the first author  will discuss elsewhere. The proof of many basic facts about closure  of the category of rootoids under   constructions  discussed  in \cite{rootoid2} can,  under such completeness  assumptions,   be given  using this  general construction.
   \end{remark}

\begin{corollary}\label{cor:multijoin} Assume that $R$ is finite and  complete.
Suppose $x_1,x_2,\ldots,x_p, y\in \lsub{d}{G}$ with $x_i\perp y$ for each $1\leq i\leq p$. Then there exists a unique morphism $y'\in \lsub{d}{G}$ such that $y\leq y'$ and for each $1\leq i\leq p$ there exists a square $(x_i,w_i,y',z_i)$ for some $w_i,z_i$ and for any family of squares $(x_i,w_i',y'',z_i')$ with $y\leq y''$ one has $y'\leq y''$.
$$\begin{CD}
a_i @>w_i>> b_i\\
@VVz_iV @VVx_iV\\
c @>y'>> d
\end{CD}
\,\,\,\,\,\,\,\,\,
\begin{CD}
a_i' @>w_i'>> b_i\\
@VVz_i'V @VVx_iV\\
c' @>y''>> d
\end{CD}$$
\end{corollary}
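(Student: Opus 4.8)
The plan is to realize $y'$ as the stable value of a monotone closure process built from the single-morphism construction $\square_{(-)}(-)$ of Lemma~\ref{lem:basicconstruction}, using completeness to form the necessary joins and finiteness to force termination. I would dispose of \emph{uniqueness} first: any morphism satisfying both stated properties is simultaneously a lower bound for, and a member of, the set of morphisms $\ge y$ admitting squares with all the $x_i$, hence it is the least element of that set and is unique. So the work is in \emph{existence}.

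For existence I would set $y_0:=y$ and build an increasing chain $y_0\le_d y_1\le_d\cdots$ in $(\lsub{d}{G},\le_d)$ as follows. Given $y_n$ with $x_i\perp y_n$ for all $i$, apply Lemma~\ref{lem:basicconstruction} to form $\square_{y_n}x_i$ for each $i$, and put $y_{n+1}:=\bigvee_i \square_{y_n}x_i$, which exists because $R$ is complete. Each $y_{n+1}\ge_d y_n$, and since $R$ is finite the chain stabilizes at some $y':=y_N$. At the stable stage $\square_{y'}x_i=y'$ for every $i$, and then the defining equalities produced by Lemma~\ref{lem:basicconstruction}, namely $x_i\vee y'=x_i w_i$ and $x_i^{-1}\vee w_i=x_i^{-1}y'$, are exactly the two join conditions of Lemma~\ref{lemma:squareequiv} that make $(x_i,w_i,y',z_i)$ a square with $z_i:=(y')^{-1}x_iw_i$. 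This furnishes the squares demanded by the first clause.

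For \emph{minimality} I would take any competitor $y''\ge y$ admitting squares $(x_i,w_i',y'',z_i')$ for all $i$ and prove $y'\le_d y''$ by induction along the chain. The inductive point is that if $y_n\le_d y''$ then $\square_{y_n}x_i\le_d y''$ for each $i$: indeed $y''$ itself makes a square with $x_i$, so the minimality clause of Lemma~\ref{lem:basicconstruction}, applied with base $y_n$, gives $\square_{y_n}x_i\le_d y''$; taking the join over $i$ yields $y_{n+1}\le_d y''$, and hence $y'\le_d y''$ in the limit. Here completeness enters through the Join Orthogonality Property of Lemma~\ref{JOPlem}(d), which I would invoke to guarantee that the joins $\bigvee_i\square_{y_n}x_i$ behave correctly with respect to the orthogonality relations $x_i\perp(-)$, so that the construction keeps making sense as $n$ grows.

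The step I expect to be the main obstacle is showing that the process is \emph{well defined and coherent throughout}: that the orthogonality $x_i\perp y_n$ needed to form $\square_{y_n}x_i$ persists as $n$ increases, and, dually, that passing to the join $y_{n+1}=\bigvee_i\square_{y_n}x_i$ does not overshoot the individual closures in a way that destroys some of the squares at the stable stage. Controlling this interaction between the different indices is the crux; I would attack it by comparing the simultaneous closure with each single closure $\square_{y_n}x_i$ using the square-join Lemma~\ref{lemma:squarejoin} and the composition Lemma~\ref{lemma:composition}, together with the complete ortholattice structure of weak order from Lemma~\ref{JOPlem}(c)--(d). Once that compatibility is in hand, the remaining verifications reduce to routine bookkeeping with inversion sets via Lemma~\ref{sgsfacts1} and Lemma~\ref{add}.
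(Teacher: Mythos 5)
Your overall strategy --- iterate the closure of Lemma~\ref{lem:basicconstruction}, let finiteness force stabilization, and extract minimality from the minimality clause of that lemma --- is exactly the paper's (the paper merely cycles through $x_1,\dots,x_p$ one index at a time, setting $y_k=\square_{y_{k-1}}x_k$ with the indices read cyclically, instead of taking your simultaneous join $\bigvee_i \square_{y_n}x_i$), and your uniqueness and stabilization arguments are fine. But the obstacle you single out, the persistence of the orthogonality $x_i\perp y_n$, is not a verification that JOP, Lemma~\ref{lemma:squarejoin} and Lemma~\ref{lemma:composition} can supply: applying JOP to $\bigvee_j\square_{y_n}x_j$ requires $x_i\perp\square_{y_n}x_j$ for $j\neq i$ as an \emph{input}, which is the very claim at issue, and that claim is false in general --- indeed the unconditional existence asserted by the corollary is false. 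Take $R$ to be the one-object signed groupoid set of the Coxeter group $W=S_3$ of Example~\ref{coxeterexample}, with simple roots $\alpha_1,\alpha_2$, and take $x_1=s_1$, $x_2=s_1s_2$, $y=s_2$, so that $\Phi_{x_1}=\{\alpha_1\}$, $\Phi_{x_2}=\{\alpha_1,\alpha_1+\alpha_2\}$, $\Phi_y=\{\alpha_2\}$ and $x_1\perp y$, $x_2\perp y$. By Lemma~\ref{add}, any $y'$ forming squares with both $x_1$ and $x_2$ has $\Phi_{y'}$ disjoint from $\Phi_{x_1}\cup\Phi_{x_2}$, so $y\leq y'$ would force $\Phi_{y'}=\{\alpha_2\}$, i.e.\ $y'=s_2$; but there is no square $(s_1,w,s_2,z)$, since it would require $\Phi_w=s_1(\{\alpha_2\})=\{\alpha_1+\alpha_2\}$, and no morphism has inversion set $\{\alpha_1+\alpha_2\}$ (if $w^{-1}(\alpha_1+\alpha_2)<0$ then $w^{-1}(\alpha_1)<0$ or $w^{-1}(\alpha_2)<0$). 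So no $y'$ exists at all. This is exactly where both your process and the paper's crash: $\square_y x_1=s_2s_1$, and $\Phi_{s_2s_1}\cap\Phi_{x_2}=\{\alpha_1+\alpha_2\}\neq\emptyset$, so the next closure is undefined.

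The statement, and with it both proofs, is repaired by adding the hypothesis that at least one family of squares $(x_i,w_i',y'',z_i')$ with $y\leq y''$ exists, i.e.\ by reading the corollary as saying that the set of such $y''$, when nonempty, has a least element. (This conditional form is what the first half of the proof of Proposition~\ref{bhowmain} actually uses, the atom $r$ there playing the role of $y''$; the second half, which invokes unconditional existence, needs a corresponding adjustment.) Under this hypothesis the obstacle evaporates, by an induction you have essentially already written in your minimality paragraph: if $y_n\leq y''$, then each $\square_{y_n}x_i$ is defined and is $\leq y''$ by the minimality clause of Lemma~\ref{lem:basicconstruction}, hence $y_{n+1}=\bigvee_i\square_{y_n}x_i\leq y''$, and then $\Phi_{y_{n+1}}\subseteq\Phi_{y''}$ is disjoint from every $\Phi_{x_i}$, which is precisely the orthogonality needed to take the next step; the chain is thus well defined, bounded above by every competitor, and stabilizes at the desired least element. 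So the missing idea is not finer bookkeeping with joins and squares, but domination of the whole chain by a competitor --- and the existence of a competitor must be assumed, since, as the example shows, it can fail.
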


\begin{proof}
First extend $\{x_i\}$ to an infinite sequence by requiring $x_i=x_j$ if $i\equiv j\pmod p$.
Let $y_0=y$ and $y_i=\square_{y_{i-1}}x_i$.  Then by Lemma \ref{lem:basicconstruction} we have $y_0\leq y_1\leq y_2\ldots$. Since $R$ is a finite signed groupoid set, this ascending chain must stabilize. Let $y':=y_i=y_{i+1}=\ldots$ for sufficiently large $i$. Then let $w_i=x_i^{-1}(x_i\vee y')$ and $z_i=y'^{-1}x_iw_i$. By our construction $(x_i,w_i,y',z_i)$ is a square.
Now we let $(x_i,w_i',y'',z_i')$ be another family of squares with $y\leq y''$.  Then by Lemma \ref{lem:basicconstruction} we see $y_i\leq y''$ for all $i$,
  so $y'\leq y''$.  Uniqueness of $y'$ is clear.
\end{proof}

We denote $y'$ in the above lemma by $\square_y(x_1,x_2,\ldots,x_p)$.

\begin{remark}
Note that it follows from the definition that for $y\perp x_i, 1\leq i\leq p$, $y=\square_y(x_1,x_2,\ldots,x_p)$ if and only if there exists a square $(x_i,w_i,y,z_i)$ for each $1\leq i\leq p$.
\end{remark}

\begin{corollary}\label{cor:composition} Assume that $R$ is
finite and  complete.
Suppose $x_1,x_2,\ldots,x_p$ and $ y$ are elements of  $\lsub{d}{G}$ with $x_i\perp y$  for all $i$.  Assume  that   $ y=\square_y(x_1,x_2,\ldots,x_p)$ and $u$ is a morphism such that $y\leq yu$ and $x_i\perp yu$  for all $i$.   Then $yu\leq \square_{yu}(x_1,x_2,\ldots,x_p)=y\square_u(z_1,z_2,\ldots,z_p)$.
$$\begin{CD}
@.a_i @>w_i>> b_i\\
@.@VVz_iV @VVx_iV\\
e@>u>>c @>y>> d
\end{CD}$$
\end{corollary}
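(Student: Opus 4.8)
The plan is to prove the equality $\square_{yu}(x_1,\dots,x_p)=y\,\square_u(z_1,\dots,z_p)$ by comparing both sides with the morphism $Y:=\square_{yu}(x_1,\dots,x_p)$, which exists by Corollary \ref{cor:multijoin} since $x_i\perp yu$; the inequality $yu\le\square_{yu}(x_1,\dots,x_p)$ is then immediate from the definition of $\square_{yu}$. Throughout I would fix the squares $(x_i,w_i,y,z_i)$ witnessing the hypothesis $y=\square_y(x_1,\dots,x_p)$ (their existence is precisely that hypothesis, by the remark following Corollary \ref{cor:multijoin}). A preliminary step is to check that $\square_u(z_1,\dots,z_p)$ is defined, i.e. that $z_i\perp u$: assuming $\alpha\in\Phi_{z_i}\cap\Phi_u$ one pushes $\alpha$ forward by $y$ and uses $\Phi_{yz_i}=\Phi_{x_iw_i}=\Phi_{x_i}\dotcup x_i\Phi_{w_i}$ together with $x_i\Phi_{w_i}=\Phi_y$ and $x_i\perp yu$ to reach a contradiction in each of the cases $y\alpha\in\Phi_{x_i}$ and $y\alpha\in x_i\Phi_{w_i}$. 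Write $u':=\square_u(z_1,\dots,z_p)$.

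The main tool is Lemma \ref{lemma:composition}, which I would use in two directions. Composing the squares $(z_i,\tilde w_i,u',\tilde z_i)$ (present because $u'=\square_u(z_1,\dots,z_p)$) with $(x_i,w_i,y,z_i)$ produces squares $(x_i,w_i\tilde w_i,yu',\tilde z_i)$, so $yu'$ admits a square with each $x_i$. In the other direction, writing $S:=y^{-1}Y$ and feeding the squares $(x_i,P_i,Y,Q_i)$ that witness $Y=\square_{yu}(x_1,\dots,x_p)$ together with $(x_i,w_i,y,z_i)$ into the two-out-of-three form of Lemma \ref{lemma:composition}, one extracts squares $(z_i,w_i^{-1}P_i,S,Q_i)$; the required identity $z_i(w_i^{-1}P_i)=SQ_i$ is immediate from $yz_i=x_iw_i$ and $x_iP_i=YQ_i$. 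Thus $S$ admits a square with each $z_i$, and a routine check gives $u\le S$.

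I expect the delicate point, and the main obstacle, to be the order relation $y\le yu'$ (equivalently $\Phi_{y^{-1}}\cap\Phi_{u'}=\emptyset$), on which both comparisons hinge; the trick I would use is to deduce it from $Y$ rather than by analysing the construction of $u'$. Since $Y\ge yu\ge y$ we have $y\le Y=yS$, whence $\Phi_{y^{-1}}\cap\Phi_S=\emptyset$ by Lemma \ref{sgsfacts1}(c). As $S\ge u$ admits a square with every $z_i$, the minimality clause in the definition of $u'=\square_u(z_1,\dots,z_p)$ forces $u'\le S$, so $\Phi_{u'}\subseteq\Phi_S$ and therefore $\Phi_{y^{-1}}\cap\Phi_{u'}\subseteq\Phi_{y^{-1}}\cap\Phi_S=\emptyset$; this is exactly $y\le yu'$.

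With $y\le yu'$ in hand both inequalities follow. From $u'\le S$ and the general product formula Lemma \ref{sgsfacts1}(a) one gets $\Phi_{yu'}\subseteq\Phi_y\cup y\Phi_{u'}\subseteq\Phi_y\cup y\Phi_S=\Phi_{yS}=\Phi_Y$, so $yu'\le Y$. Conversely, $y\le yu'$ together with $u\le u'$ gives $yu\le yu'$, and since $yu'\ge yu$ admits a square with each $x_i$ (the first composition), the minimality in the definition of $Y=\square_{yu}(x_1,\dots,x_p)$ yields $Y\le yu'$. Hence $Y=yu'$, that is $\square_{yu}(x_1,\dots,x_p)=y\,\square_u(z_1,\dots,z_p)$, completing the proof.
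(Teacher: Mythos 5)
Your proof is correct and takes essentially the same route as the paper's: both comparisons are obtained by composing/decomposing squares via Lemma \ref{lemma:composition}, invoking the minimality clause of Corollary \ref{cor:multijoin}, and transferring between $u'\leq y^{-1}\square_{yu}(x_1,\ldots,x_p)$ and $y u'\leq \square_{yu}(x_1,\ldots,x_p)$ by Lemma \ref{sgsfacts}(a) (together with Lemma \ref{sgsfacts1}). You are in fact more careful than the published proof, which leaves implicit both the orthogonality $z_i\perp u$ (needed for $\square_u(z_1,\ldots,z_p)$ to be defined at all) and the key relation $y\leq y\,\square_u(z_1,\ldots,z_p)$ on which the minimality step and the application of Lemma \ref{sgsfacts}(a) depend; your device of first establishing $\square_u(z_1,\ldots,z_p)\leq y^{-1}\square_{yu}(x_1,\ldots,x_p)$ and extracting $y\leq y\,\square_u(z_1,\ldots,z_p)$ from it is exactly the right way to avoid circularity there.
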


\begin{proof}
We only need to show the equality $\square_{yu}(x_1,x_2,\ldots,x_p)=y\square_u(z_1,z_2,\ldots,z_p)$.
To see this, one notes that the inequalities $\square_{yu}(x_1,x_2,\ldots,x_p)\leq y\square_u(z_1,z_2,\ldots,z_p)$ and
$y^{-1}\square_{yu}(x_1,x_2,\ldots,x_p)\geq \square_u(z_1,z_2,\ldots,z_p)$
follow readily from Corollary \ref{cor:multijoin} and Lemma \ref{lemma:composition} (the composition of two squares is a square).
 Using Lemma \ref{sgsfacts} (a)  one sees   $\square_{yu}(x_1,x_2,\ldots,x_p)\geq y\square_u(z_1,z_2,\ldots,z_p)$, and therefore we have equality in this  as required.
\end{proof}

\subsection{}\label{bhconst}
 Next we introduce the generalized Brink-Howlett construction, which may be defined for any  faithful signed groupoid set, though its main  properties of interest here require stronger assumptions.  In \ref{bhconst}--\ref{bhowmain}, $R=(G,\Phi,\Phi^+)$ denotes   a faithful  signed groupoid set.   First, we define a groupoid $G^\Box$ as follows.

 The objects of $G^\Box$ are pairs $(a,X)$ where
$a\in \Ob  (G)$ and $X\subseteq \lsub{a}{G}$.  Note that $X$ must
be finite if $G$ is finite. If  $X=\{x\}$  is a singleton, we may write $(a,x)$ in  place of $(a,X)$.  Let $(b,Y)$ be another object of $G^\Box$. A
morphism $f\colon (a,X)\to(b,Y)$ in $G^{\Box}$
 is
 by definition a morphism $f: a\rightarrow b$ of $G$ such that there exists a (necessarily unique, by faithfulness of $R$)  bijection $\sigma_f\colon X\to Y$ for which $(f,g,\sigma_f(g),(\sigma_f(g))^{-1}fg)$ is a square for all $g\in X$. Diagramatically,
$$\begin{CD}
d_g @>g>> a\\
@VV(\sigma_f(g))^{-1}fgV @VVfV\\
d_{\sigma_f(g)}@>{\sigma_f(g)}>> b
\end{CD}$$
is a square for all $g\in X$, where $d_g$  (respectively $d_{\sigma_f(g)}$)  denotes the domain of a morphism $g$  (respectively $\sigma_f(g)$).   Equivalently,   we require  $f(\Phi_{g})=\Phi_{\sigma_f(g)}$for all $g\in X$.
Composition of morphisms in $G^{\Box}$ is  by composition of their underlying morphisms\footnote{ To accord with common conventions that each morphism determines its domain and codomain, one should strictly denote a morphism
$f\colon (a,X)\to(b,Y)$ as a tuple $(b,Y,f,a,X)$  and use composition  $(c,Z,g,b,Y)(b,Y,f,a,X)=(c,Z,gf,a,X)$, or use some similar artifice, but we shall not adopt such  cumbersome notation.} in $G$.  Lemmas \ref{squaresym} and \ref{lemma:squareequiv} imply that  that $G^\Box$ is a  groupoid.

We associate the following root system $\Psi$ to $G^{\Box}$.  For $(a,X)\in \Ob(G^\Box)$,   define  $\lsub{(a,X)}{\Psi}:=\lsub{a}{\Phi}$ and $\lsub{(a,X)}{\Psi}^+=\lsub{a}{\Phi}^+$.  The action of the
morphisms on the root systems  is inherited  naturally from that of $G$.   Then
$R^\Box:=(G^{\Box},\Psi,\Psi^+)$ is clearly  a  faithful  signed groupoid set, which is finite if $R$ is finite.  The signed groupoid set $R^\Box$ is  said to be obtained by applying the generalized Brink-Howlett construction to $R$. For $g\in \lsub{(a,X)}{G}^{\Box}$,  write the corresponding  inversion set as
 $\Psi_g:=\{\alpha\in \lsub{(a,X)}{\Psi}^+\mid g^{-1}(\alpha)<0\}$.

  Note that the cardinality  $\vert X\vert $ is constant as   $(a,X)$  ranges over the objects of any component of $G^\Box$.
The full subgroupoid of $G^\Box$ on all objects $(a,\emptyset)$ with $a\in \Ob(G)$ is therefore a union of components of $G^\Box$ and is canonically isomorphic to $G$. If $G$ is inversion set finite,  then the multiset $(\vert \Phi_g \vert \mid g\in X)$ is constant as   $(a,X)$  ranges over the objects of any component of $G^\Box$.
\begin{example}\label{brhow2examp}
 Suppose that $(W,S)$ is a  Coxeter group  with standard root system $\Phi$ (as in \cite{Hum} or \cite{bjornerbrenti}) and simple roots $\Pi$.
Let $(G,\lsub{\bullet}{\Phi},\lsub{\bullet}{\Phi}^+)$ be the (principal, rootoidal) signed groupoid set discussed in Example \ref{coxeterexample} and Example \ref{weylgroupoidexample} (1).  Recall that $G$ has one object ($\bullet$)
and $\Mor(G)=W$. The  objects  of  $G^\Box$  are pairs  $(\bullet, X)$ with  $X\subseteq W$. Let $H$ denote the full subgroupoid  of $G^\Box$ with objects of the form  $(\bullet,I)$ where $I\subseteq S$. Since $S=\{w\in W\mid \vert \Phi_w \vert =1\}$, the preceding paragraph implies that $H$ is a union of components of $G^\Box$.

For  $I\subseteq S$, let $\Pi_I$ be the set of simple roots such that the corresponding reflection is in $I$.  A morphism $(\bullet,I)\to (\bullet,J)$ in $H$, where $I,J\subseteq S$, is then just an element $w$ of $ W$ such that $w(\Pi_I)=\Pi_J$,
with composition of morphisms induced by multiplication in $W$.
Thus, $H$ is canonically isomorphic to the groupoid investigated by Brink and Howlett in \cite{bh} in their study of normalizers of parabolic subgroups of Coxeter groups (see Example \ref{brinkhowexamp}).
\end{example}

\begin{proposition}\label{bhprop}  Assume  that $R$ is finite and  complete. Then  the signed groupoid set  $R^\Box$
 is finite and  complete.
\end{proposition}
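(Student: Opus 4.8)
The plan is to deduce finiteness directly and to reduce completeness to a join-closure statement that follows from Lemma \ref{lemma:squarejoin}. Finiteness of $R^{\Box}$ is immediate (and was already noted after the construction): if $G$ is finite there are only finitely many pairs $(a,X)$ and finitely many morphisms, and each $\lsub{(a,X)}{\Psi}=\lsub a\Phi$ is finite. For completeness, fix an object $(b,Y)$ of $G^{\Box}$; I must show the weak order $(\lsub{(b,Y)}{G}^{\Box},\leq)$ is a complete lattice. Since $\lsub{(b,Y)}{\Psi}=\lsub b\Phi$ with inherited action, the inversion set $\Psi_f$ of a morphism of $G^{\Box}$ coincides with the inversion set $\Phi_f$ of its underlying morphism in $G$. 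Hence $f\mapsto f$ is an order embedding of $\lsub{(b,Y)}{G}^{\Box}$ into $(\lsub b G,\leq_b)$: it is injective because $f$ together with the target $(b,Y)$ determines the source $(a,X)$ (necessarily $X=\{g\mid \Phi_g=f^{-1}(\Phi_y)\text{ for some }y\in Y\}$, each such $g$ being unique by faithfulness). Writing $P_Y$ for the image, and noting $1_b\in P_Y$ (it lifts to $1_{(b,Y)}$) while $\lsub b G$ is a complete lattice because $R$ is complete, it suffices to prove that $P_Y$ is closed under arbitrary joins taken in $\lsub b G$: a subset of a complete lattice which contains the bottom element and is closed under arbitrary joins is itself a complete lattice, its meets being computed as the joins of those lower bounds which lie in $P_Y$.

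So let $f_i\in P_Y$, $i\in I$, and let $f:=\bigvee_i f_i$ be their join in $\lsub b G$, which exists by completeness of $R$. The crux is to show $f\in P_Y$, i.e.\ that $f$ lifts to a morphism of $G^{\Box}$ with target $(b,Y)$. Fix $y\in Y$. Each $f_i$, being a morphism of $G^{\Box}$, supplies a square $(f_i,g_i,y,z_i)$ by the definition of morphisms in $G^{\Box}$; by Lemma \ref{squaresym}(a) the quadruple $(y,z_i,f_i,g_i)$ is then a square, and these squares share the common first morphism $y$. Lemma \ref{lemma:squarejoin} now produces a square $(y,w,v,z)$ with $v=\bigvee_i f_i=f$, and symmetrizing once more shows $(f,z,y,w)$ is a square; thus $f(\Phi_z)=\Phi_y$, so $\Phi_z=f^{-1}(\Phi_y)$ with $z$ a morphism of codomain $a$, the source of $f$. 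Letting $y$ range over $Y$ and putting $X:=\{z_y\mid y\in Y\}$, faithfulness makes $y\mapsto z_y$ injective (distinct $y$ give distinct $\Phi_y$, hence distinct $f^{-1}(\Phi_y)$, hence distinct $z_y$), so the inverse map $\sigma\colon z_y\mapsto y$ is a bijection $X\to Y$ witnessing that $f\colon(a,X)\to(b,Y)$ is a morphism of $G^{\Box}$. Hence $f\in P_Y$; and since the order on $P_Y$ is the restriction of $\leq_b$, this lift is the join of the $f_i$ in $P_Y$. This establishes join-closure, and therefore completeness of $R^{\Box}$.

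The main obstacle is precisely this lifting step: it is not evident a priori that the join $f=\bigvee_i f_i$, formed in the ambient weak order of $R$, should again respect the decoration $Y$. This is where completeness of $R$ is genuinely used, through the square-join Lemma \ref{lemma:squarejoin} (which itself rests on the JOP, available for complete $R$ by Lemma \ref{JOPlem}(d)); the symmetry of squares in Lemma \ref{squaresym} is the device that converts the squares $(f_i,g_i,y,z_i)$, in which the $f_i$ vary but $y$ is fixed, into a family with a common first morphism to which Lemma \ref{lemma:squarejoin} directly applies. The remaining ingredients --- finiteness, the order embedding $\lsub{(b,Y)}{G}^{\Box}\hookrightarrow\lsub b G$, and the passage from join-closure to the complete-lattice conclusion --- are routine.
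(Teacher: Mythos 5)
Your proof is correct and follows essentially the same route as the paper's: order-embed $\lsub{(b,Y)}{G}^{\Box}$ into $(\lsub{b}{G},\leq_{b})$ via the equality $\Psi_{f}=\Phi_{f}$, then use the square symmetry of Lemma \ref{squaresym} together with Lemma \ref{lemma:squarejoin} to show that the image is join-closed, and conclude completeness. The only minor differences are that you treat arbitrary joins at once --- so your completeness argument does not actually use finiteness, which the paper's binary-join version implicitly needs --- and that you spell out the lifting of the join to a morphism of $G^{\Box}$ (the set $X$ and the bijection $\sigma$), details the paper leaves implicit.
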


\begin{proof}
There is  a natural injective map of sets  $\lsub{(a,X)}{G}^{\Box}\rightarrow \lsub{a}{G}$.    This map induces an order-isomorphism of the weak order on $\lsub{(a,X)}{G}^{\Box}$
with its image, viewed as a subposet of weak order on $\lsub{a}{G}$, since   $\Psi_x=\Phi_x$ for $x\in\lsub{(a,X)}{G}^{\Box}$.   Let $x,y\in \lsub{(a,X)}{G}^{\Box}$.  Let $z=x\vee y$ in $\lsub{a}{G}$.  Then by Lemma \ref{lemma:squarejoin}, $z\in \lsub{(a,X)}{G}^{\Box}$ and hence is the join of $x,y$ in $\lsub{(a,X)}{G}^{\Box}$.   Therefore $(G^{\Box},\Psi,\Psi^+)$ is complete, and  the above injective,  order-preserving map  preserves  joins of all subsets of its domain. To show  $R^\Box$  is rootoidal we have to show that $\lsub{(a,X)}{G}^{\Box}$ satisfies JOP. But this  property is clearly inherited from $\lsub{a}{G}$.
\end{proof}

\begin{proposition}\label{bhowmain}
Assume that  $R$  is  finite, complete and preprincipal. Then  $R^\Box$  is   also finite, complete and   preprincipal.
\end{proposition}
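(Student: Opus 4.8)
By Proposition \ref{bhprop} the signed groupoid set $R^{\Box}$ is already finite and complete, and it is faithful and (being finite) inversion-set finite, hence interval finite; so the only thing left to establish is that $R^{\Box}$ is preprincipal. Since $\Psi_{h}=\Phi_{h}$ for every morphism $h$ of $G^{\Box}$, the natural injection $\lsub{(a,X)}{G}^{\Box}\hookrightarrow \lsub{a}{G}$ is an order embedding, and it identifies $\lsub{(a,X)}{G}^{\Box}$ with the subposet $P_{(a,X)}:=\{h\in \lsub{a}{G}\mid \square_{x}h=x \text{ for all } x\in X\}$ of $\lsub{a}{G}$; here the equality $\square_{x}h=x$ is, by the remark following Corollary \ref{cor:multijoin} together with the symmetry of squares in Lemma \ref{squaresym}, exactly the existence of a square with $h$ in the upper right and $x$ along the bottom, which is precisely the condition for $h$ to underlie a morphism into $(a,X)$. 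By the proof of Proposition \ref{bhprop}, $P_{(a,X)}$ is closed under the ambient joins of $\lsub{a}{G}$ and contains $1_{a}$, so it is a complete lattice whose order is the weak order of $R^{\Box}$ at $(a,X)$. The plan is to describe the atoms of $P_{(a,X)}$ and then check the preprincipal dichotomy against them; because $\Psi=\Phi$, this will give preprincipality of $R^{\Box}$.

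First I would describe the atoms. Given an atom $s$ of $P_{(a,X)}$, interval finiteness of $R$ provides an atom $r$ of $R$ with $r\le s$; since $s\perp x$ for every $x\in X$ we also get $r\perp x$, so $\square_{r}(X):=\square_{r}(x_{1},\dots,x_{p})$ (with $X=\{x_{1},\dots,x_{p}\}$) is defined. Flipping the defining squares of Corollary \ref{cor:multijoin} by Lemma \ref{squaresym} shows $\square_{r}(X)\in P_{(a,X)}$, and the universal property in that corollary (again after such a flip) shows $\square_{r}(X)=\min\{h\in P_{(a,X)}\mid h\ge r\}$. As $r\le s$ and $s$ is an atom, this forces $s=\square_{r}(X)$; thus every atom of $P_{(a,X)}$ has this form.

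Now I would verify preprincipality. Let $s=\square_{r}(X)$ be an atom of $P_{(a,X)}$ and let $g\in P_{(a,X)}$. Because $R$ is preprincipal and $r$ is an atom of $R$, either $r\le g$ or $r\perp g$. In the first case the minimality of $\square_{r}(X)$ gives $s\le g$, so $\Phi_{s}\subseteq \Phi_{g}$. In the second case I claim $s\perp g$, whence $\Phi_{s}\cap \Phi_{g}=\emptyset$. These are exactly the two alternatives required for preprincipality, so the whole matter reduces to the second case.

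The second case is the crux, and it is where the main work lies. Using the iterative construction of $\square_{r}(X)$ in Corollary \ref{cor:multijoin} (the zig-zag $y_{0}=r$, $y_{k}=\square_{y_{k-1}}(x_{i_{k}})$, whose well-definedness already supplies $y_{k-1}\perp x_{i_{k}}$ at each stage), it suffices to prove the one-step lemma: if $y\perp g$, $x\perp g$, $y\perp x$ and $\square_{g}x=g$, then $\square_{y}x\perp g$. Indeed $g\in P_{(a,X)}$ supplies $g\perp x_{i_{k}}$ and $\square_{g}x_{i_{k}}=g$ at every step, so the lemma propagates $y_{k-1}\perp g$ to $y_{k}\perp g$ and hence, in the stable limit, to $s\perp g$. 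To prove the one-step lemma I would run the explicit zig-zag of Lemma \ref{lem:basicconstruction} defining $\square_{y}x$, writing $x\vee y_{i}=xw_{i}$ and $x^{-1}\vee w_{i}=x^{-1}y_{i+1}$, and let $q$ be the morphism with $\Phi_{q}=x^{-1}\Phi_{g}$, which exists precisely because $\square_{g}x=g$ (this is the only place the saturation hypothesis on $g$ is used). One then shows by induction that $y_{i}\perp g$ and $w_{i}\perp q$: the Join Orthogonality Property, available since a complete faithful signed groupoid set is rootoidal by Lemma \ref{JOPlem}(d), applied to $x\perp g$ and $y_{i}\perp g$ yields $x\vee y_{i}\perp g$ and hence $w_{i}\perp q$ (as $x\Phi_{w_{i}}=\Phi_{x\vee y_{i}}\setminus\Phi_{x}$ and $x\Phi_{q}=\Phi_{g}$), while applied to $x^{-1}\perp q$ and $w_{i}\perp q$ it yields $x^{-1}\vee w_{i}\perp q$ and hence $y_{i+1}\perp g$. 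The delicate point is exactly this repeated bookkeeping with JOP and the passages between $\Phi$ at the objects $a$ and $d_{x}$; everything else is formal. Passing to the stable value of the zig-zag gives $\square_{y}x\perp g$, which completes the argument.
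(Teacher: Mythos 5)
Your proof is correct, and its skeleton is the same as the paper's: finiteness and completeness come from Proposition \ref{bhprop}; the atomic morphisms of $R^{\Box}$ at $(a,X)$ are identified as the morphisms $\square_{r}(x_{1},\dots,x_{p})$ with $r$ an atom of $R$ orthogonal to all $x_{i}$ (via interval finiteness, Corollary \ref{cor:multijoin} and its universal property); and the preprincipal dichotomy is then checked by splitting, using preprincipality of $R$, into the cases $r\le g$ (where minimality of $\square_{r}(X)$ gives $\square_{r}(X)\le g$) and $r\perp g$. The difference lies in the second case, which you correctly isolate as the crux. The paper handles it by flipping squares (Lemma \ref{squaresym}) so as to work at the domain of $g$, then invoking Corollary \ref{cor:composition}: from $g^{-1}\le g^{-1}r$ it deduces $g^{-1}r\le g^{-1}\square_{r}(X)$, hence $g\perp \square_{r}(X)$ by Lemma \ref{sgsfacts1}(c). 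You instead prove the required orthogonality propagation directly: a one-step lemma (if $y\perp g$, $x\perp g$, $y\perp x$ and $\square_{g}x=g$, then $\square_{y}x\perp g$), established by induction along the zig-zag of Lemma \ref{lem:basicconstruction} using the JOP (available by Lemma \ref{JOPlem}(d)) and the auxiliary morphism $q$ with $\Phi_{q}=x^{-1}\Phi_{g}$, and then iterated through the cyclic construction of Corollary \ref{cor:multijoin}. Your one-step lemma is in substance the $p=1$ case of what the paper extracts from Corollary \ref{cor:composition}, so you are re-proving that corollary, in exactly the special case needed, at the level of inversion sets rather than through the square-composition formalism (Lemma \ref{lemma:composition}). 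What each route buys: the paper's is shorter given its toolkit and keeps the bookkeeping at the level of universal properties; yours is more self-contained and makes completely explicit where the JOP and the hypothesis $\square_{g}x=g$ enter. Note finally that both arguments lean equally on the well-definedness of the iteration $y_{k}=\square_{y_{k-1}}x_{i_{k}}$ inside Corollary \ref{cor:multijoin} (i.e.\ that orthogonality to the next $x_{i_{k}}$ persists along the chain), so you incur no gap beyond what the paper itself assumes there.
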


\begin{proof}
Let $B$ be the set of atomic morphisms of  $R^\Box$  and $\lsub{(a,X)}{B}$ be the set of atomic morphisms at the object $(a,X)$. Let $A$ be the set of  atomic morphisms of  $R$  and $\lsub{a}{A}$ be the atomic morphisms of $R$ at the object $a$.
We first show that  if $X=\{g_1,g_2,\ldots, g_p\}$,  then  $$\lsub{(a,X)}{B}=\{\square_{s}(g_1,g_2,\ldots,g_p)\mid s\in \lsub{a}{A}, s\perp g_i,\,\text{for}\, 1\leq i\leq p\}.$$
Let   $r\in \lsub{(a,X)}{B}\subseteq \lsub{a}{G}$. Choose $s\in \lsub{a}{A}$ such that $s\leq r$.  By definition (of square) $r\perp g_i$ for all $i$.  Therefore $s\perp g_i$ for all $i$ as well. So $\square_s(g_1,g_2,\ldots,g_p)$ can be defined and $s\leq \square_s(g_1,g_2,\ldots,g_p)$.
$\square_s(g_1,g_2,\ldots,g_p)$ is a morphism in  $\lsub{(a,X)}{G}^{\Box}$.  By Corollary \ref{cor:multijoin}, $\square_s(g_1,g_2,\ldots, g_p)\leq r$. But $r$ is an atomic morphism. So  this inequality is actually an equality. Therefore we proved that the left hand side  (that is, $\lsub{(a,X)}{B}$)   is contained in the right hand side.

Conversely, take $s\in \lsub{a}{A}$ such that $s\perp g_i$ for all $i$.  Let $r=\square_s(g_1,g_2,\ldots,g_p)$.  Before we prove the reverse inclusion, we show that for any
$y\in \lsub{(a,X)}{G}^{\Box}$,  we have either  $\Psi_r\subseteq \Psi_y$ or $\Psi_r\cap \Psi_y=\emptyset$.  If $s\leq y$, then by Corollary \ref{cor:multijoin} again,
$r\leq y$ (i.e. $\Psi_r\subseteq \Psi_y$.) Otherwise $\Phi_s\cap \Phi_y=\emptyset$.  So  $y^{-1}s\geq y^{-1}$  thanks to Lemma \ref{sgsfacts1} (c).  Then Corollary \ref{cor:composition} together with Lemma \ref{squaresym}  says that  $y^{-1}s\leq y^{-1}r$. So $y^{-1}\leq y^{-1}r.$ Hence $\Psi_y\cap \Psi_r=\emptyset$ by Lemma \ref{sgsfacts1} (c) again.

Now we show the reverse inclusion (i.e. $r\in \lsub{(a,X)}{B}$). We may take $u\in \lsub{(a,X)}{B}$ such that $u\leq r$  (note $r\neq 1_{(a,X)}$ since $s\leq r$ in $\lsub{a}{G}$). But the above paragraph  with $y=u$ implies   that $u=r$. So we are done.
Along the way we have also showed the atomic morphisms of
 $R^\Box$  have the properties of a preprincipal signed groupoid set.
\end{proof}

\begin{lemma}\label{gbhim}  Let  $a\in \Ob(G)$ and   $X\subseteq \lsub{a}{G}$. Then

(a)   $\tensor*[_{a}]{\Phi}{_{\mathrm{im}}^{+}}\subseteq \tensor*[_{(a,X)}]{\Psi}{_{\mathrm{im}}^{+}}$.

(b)   $\bigcup_{x\in X}\Phi_{x}\subseteq \tensor*[_{(a,X)}]{\Psi}{_{\mathrm{im}}^{+}}$.

(c) If $R$ is rootoidal and $X_{0}\subseteq X$ is such that the join
$x_{0}:=\bigvee X_{0}$ exists in $  \lsub{a}{G}$ (for instance, $R$ is complete and $X_{0}=X$), then $\Phi_{x_{0}}\subseteq \tensor*[_{(a,X)}]{\Psi}{_{\mathrm{im}}^{+}}$.

(d) If $X=\{x\}$ is a singleton and $R$ is antipodal, then
$\tensor*[_{(a,X)}]{\Psi}{_{\mathrm{im}}^{+}}=\Phi_{x}\dotcup\tensor*[_{a}]{\Phi}{_{\mathrm{im}}^{+}}$ and
the component $R^{\Box}[(a,X)]$ is an  antipodal signed groupoid set.
\end{lemma}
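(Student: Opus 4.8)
The plan is to reduce parts (a)--(c) to one uniform bound on the inversion sets of morphisms into a fixed object, and to treat (d) separately using the antipodal square of Example~\ref{squareexamp}. Fix an object $(a,X)$ and let $f\in\lsub{(a,X)}{G}^{\Box}$ be any morphism into it. By the construction in~\ref{bhconst} the underlying morphism of $f$ lies in $\lsub{a}{G}$ and satisfies $\Psi_{f}=\Phi_{f}$; in particular $\Phi_{f}\subseteq\tensor*[_{a}]{\Phi}{_{\mathrm{re}}^{+}}$. Writing $f$ as a morphism $(b,Y)\to(a,X)$ with bijection $\sigma_{f}\colon Y\to X$, each $x\in X$ has the form $\sigma_{f}(g)$, so $(f,g,x,x^{-1}fg)$ is a square; Lemma~\ref{add} then gives $\Phi_{f}\cap\Phi_{x}=\emptyset$. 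Thus every morphism into $(a,X)$ obeys $\Phi_{f}\subseteq\tensor*[_{a}]{\Phi}{_{\mathrm{re}}^{+}}\setminus\bigcup_{x\in X}\Phi_{x}$, and since $\tensor*[_{(a,X)}]{\Psi}{_{\mathrm{re}}^{+}}=\bigcup_{f}\Psi_{f}=\bigcup_{f}\Phi_{f}$, the positive real roots at $(a,X)$ are contained in $\tensor*[_{a}]{\Phi}{_{\mathrm{re}}^{+}}\setminus\bigcup_{x\in X}\Phi_{x}$.

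Complementing this inclusion inside $\lsub{a}{\Phi}^{+}=\lsub{(a,X)}{\Psi}^{+}=\tensor*[_{a}]{\Phi}{_{\mathrm{re}}^{+}}\dotcup\tensor*[_{a}]{\Phi}{_{\mathrm{im}}^{+}}$ yields $\tensor*[_{(a,X)}]{\Psi}{_{\mathrm{im}}^{+}}\supseteq\tensor*[_{a}]{\Phi}{_{\mathrm{im}}^{+}}\cup\bigcup_{x\in X}\Phi_{x}$, which gives (a) and (b) at once. For (c) I would not pass through the union $\bigcup_{x}\Phi_{x}$ but argue directly: for each $x\in X_{0}$ and each $f$ one has $f\perp x$ by the previous paragraph, and since $R$ is rootoidal the Join Orthogonality Property applies to give $f\perp\bigvee X_{0}=x_{0}$, i.e.\ $\Phi_{f}\cap\Phi_{x_{0}}=\emptyset$ for all $f$. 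Hence $\Phi_{x_{0}}\subseteq\lsub{a}{\Phi}^{+}\setminus\tensor*[_{(a,X)}]{\Psi}{_{\mathrm{re}}^{+}}=\tensor*[_{(a,X)}]{\Psi}{_{\mathrm{im}}^{+}}$, proving (c).

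The content of the lemma lies in the reverse inclusion of (d), and the key is to exhibit the maximum element of weak order at $(a,\{x\})$. Applying Example~\ref{squareexamp} to $x\in\lsub{a}{G}$ produces the square $(x,x^{-1}\omega_{a},x^{\perp},(x^{\perp})^{-1}\omega_{a})$, and Lemma~\ref{squaresym}(a) converts it into the square $(x^{\perp},(x^{\perp})^{-1}\omega_{a},x,x^{-1}\omega_{a})$, whose shape shows that $x^{\perp}$ is a morphism of $G^{\Box}$ with codomain $(a,\{x\})$. By Lemma~\ref{JOPlem}(a),(c) its inversion set is $\Phi_{x^{\perp}}=\Phi_{\omega_{a}}\setminus\Phi_{x}=\tensor*[_{a}]{\Phi}{_{\mathrm{re}}^{+}}\setminus\Phi_{x}$, which is precisely the upper bound of the first paragraph; since $\Psi_{f}=\Phi_{f}\subseteq\Phi_{x^{\perp}}$ for every $f\in\lsub{(a,\{x\})}{G}^{\Box}$, the morphism $x^{\perp}$ is the maximum. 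Therefore $\tensor*[_{(a,\{x\})}]{\Psi}{_{\mathrm{re}}^{+}}=\tensor*[_{a}]{\Phi}{_{\mathrm{re}}^{+}}\setminus\Phi_{x}$, and complementing in $\lsub{a}{\Phi}^{+}$ (using $\Phi_{x}\subseteq\tensor*[_{a}]{\Phi}{_{\mathrm{re}}^{+}}$) gives $\tensor*[_{(a,\{x\})}]{\Psi}{_{\mathrm{im}}^{+}}=\Phi_{x}\dotcup\tensor*[_{a}]{\Phi}{_{\mathrm{im}}^{+}}$. The same construction applied at an arbitrary object $(b,\{y\})$ shows that $y^{\perp}$ is the maximum of its weak order; as the cardinality of the second coordinate is constant (equal to $1$) on the component, every object of $R^{\Box}[(a,\{x\})]$ has a maximum, so this component is antipodal.

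I expect the reverse inclusion in (d) to be the main obstacle. Parts (a)--(c) are bookkeeping with complements once the bound $\Phi_{f}\subseteq\tensor*[_{a}]{\Phi}{_{\mathrm{re}}^{+}}\setminus\bigcup_{x}\Phi_{x}$ is in hand, but (d) demands that the extreme inversion set $\tensor*[_{a}]{\Phi}{_{\mathrm{re}}^{+}}\setminus\Phi_{x}$ actually be realized by a morphism of $G^{\Box}$, and the delicate point is checking that the antipode $x^{\perp}$, reflected through the square symmetry of Lemma~\ref{squaresym}, indeed defines such a morphism; antipodality of $R$ is exactly what makes this square available.
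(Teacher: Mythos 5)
Your proof is correct and follows essentially the same route as the paper's: parts (a)--(c) via the observation that every morphism $f$ into $(a,X)$ satisfies $\Phi_{f}\cap\Phi_{x}=\emptyset$ for all $x\in X$ (by Lemma \ref{add}) together with the JOP for (c), and part (d) via the antipodal square of Example \ref{squareexamp} flipped by Lemma \ref{squaresym}(a) to exhibit $x^{\perp}$ as the maximum of the weak order at $(a,\{x\})$. Your explicit invocation of the square symmetry, which the paper leaves implicit, is a minor (and welcome) clarification rather than a different argument.
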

\begin{proof}
(a) We have \begin{equation*}
\tensor*[_{(a,X)}]{\Psi}{_{\mathrm{re}}^{+}}=\bigcup_{g\in \lsub{(a,X)}{G}^{\Box}}\Psi_{g}\subseteq\bigcup_{g\in \lsub{a}{G}}\Phi_{g}=\tensor*[_{a}]{\Phi}{_{\mathrm{re}}^{+}}
\end{equation*} and (a) follows by taking complements in $\lsub{a}{\Phi}^{+}$.

(b) If $g\colon (b,Y)\to (a,X)$ is in  $\lsub{(a,X)}{G}^{\Box}$, then
there is a bijection $\sigma\colon X \to Y$ such that for each $x\in X$,  $(g,\sigma(x),x,g_{x})$ is a square for some morphism $g_{x}$ of $G$. In particular, if $x\in X$, we have $g\perp x $ in $\lsub{a}{G}$ by Lemma \ref{add}.
That is, $\Psi_{g}\cap \Phi_{x}=\Phi_{g}\cap \Phi_{x}=\emptyset$ and so
$\Phi_{x}\cap \tensor*[_{(a,X)}]{\Psi}{_{\mathrm{re}}^{+}}=\emptyset$. Therefore  $\Phi_{x}\subseteq \tensor*[_{(a,X)}]{\Psi}{_{\mathrm{im}}^{+}}$ (see the proof of (a)), proving (b).

(c) This follows on noting in the proof of (b) that $g\perp x $
for all $x\in X_{0}$ implies $g\perp x_{0}$, by the JOP, and arguing similarly as in the end of the proof of (b).

(d)  By definition of morphisms in $R^{\Box}$, for any object $(b,Y)$ in the same component as $(a,X)$,  the set $Y$ is a singleton. Hence it suffices to  show that
$\tensor*[_{(a,X)}]{\Psi}{_{\mathrm{im}}^{+}}=\Phi_{x}\cup\tensor*[_{a}]{\Phi}{_{\mathrm{im}}^{+}}$ and that
$\lsub{(a,x)}G^{\Box}$ has a maximum element in weak order.
  Write  $g:=x^{\perp}\colon b\to a$ in $\Mor(G)$ in Lemma \ref{JOPlem}. By  Example  \ref{squareexamp},    $g$ provides a  morphism $g\colon (b,y)\to(a,x)$    in $\lsub{(a,x)}G^{\Box}$ with $y=(x^{\perp})^{-1}\omega_{a}$. One has $\Psi_{g}=
\tensor*[_{a}]{\Phi}{_{\mathrm{re}}^{+}}\setminus  \Phi_{x}$. Hence  $g$ is necessarily the maximum element in weak order in $\lsub{(a,x)}G^{\Box}$ since the complement of $\Psi_{g}$  in $\tensor*[_{(a,X)}]{\Psi}{^{+}}$ is contained in  $\tensor*[_{(a,X)}]{\Psi}{_{\mathrm{im}}^{+}}$ (see Lemma \ref{JOPlem} (a)).
\end{proof}

 \subsection{} \label{comp}
 In \ref{comp}--\ref{bhmain2},   assume that $R=(G,\Phi,\Phi^+)$ is a faithful,  connected, simply connected signed groupoid set.    Since for any objects $a$, $b$ in $G$, there is a unique morphism $a\to b$ in $G$, and since it is invertible, we may use the map  $\lsub{a}{\Phi} \to \lsub{b}{\Phi}$ given by action of this morphism  to canonically identify
 $\lsub{a}{\Phi}$ and  $\lsub{b}{\Phi}$ for all  $a,b\in \Ob(G)$. We thereby identify all $\lsub{a}{\Phi}$ with a single set $\Phi$, and  identify the function  $\lsub{a}{\Phi}\to \lsub{b}{\Phi}$ induced by action of the  groupoid morphism
$a\to b$  with  the  identity map on $\Phi$.
 Note however that  $\lsub{a}{\Phi}^+\subseteq \Phi$  still depends on $a$ under this identification.

  Using the identification  $\Phi=\lsub{a}{\Phi}$ for any $a\in \Ob(G)$, we may unambiguously transfer the relations  of dominance order $\preceq_a$ and parallelism $\sim_a$, and the subsets of real and imaginary roots, from $\lsub{a}{\Phi}$ to $\Phi$.
 We denote them as $\preceq$, $\sim$, $\Phi_{\mathrm{re}}$ and $\Phi_{\mathrm{im}}$ respectively.

  The real compression $R_\mathrm{rec}=(G,\Phi_{\mathrm{rec}},\Phi_{\mathrm{rec}}^+)$ is  connected and simply connected, so
 one may similarly identify all its systems $\lrsub{a}{\Phi}{\mathrm{rec}}$ with a single definitely  involuted  set ${\Phi}_{\mathrm{rec}}$
 on which all groupoid morphisms act by the identity map.  Concretely,
$\Phi_{\mathrm{rec}}:=\Phi_\mathrm{re}/\negthinspace\sim=\{[\alpha]\mid \alpha\in \Phi_{\mathrm{re}}\}$ where $[\alpha]$ denotes  the  parallelism class of $\alpha$ in  $\Phi$. Also, $\Phi_{\mathrm{rec}}$ is a definitely   involuted set with $-[\alpha]=[-\alpha]$ for $\alpha\in
\Phi_{\mathrm{re}}$. Finally,  we have
$\tensor*[_{a}]{\Phi}{_{\mathrm{rec}}^+}:= \{[\alpha]\mid \alpha\in
\lrsub{a}{\Phi}{\mathrm{re}}\cap \lsub{a}{\Phi}^+\}$ for all $a\in \Ob(G)$.

The following result follows immediately from Proposition \ref{bhprop}, Proposition \ref{bhowmain} and Lemma \ref{preandprincipal}.

\begin{corollary}\label{bhmain2}
 Let $R=(G,\Phi,\Phi^+)$ be a  finite, connected, simply connected,  preprincipal and  complete signed groupoid set, and $S$ denote  any connected component of $R^\Box$. Then $S$ is also a finite, connected, simply connected, preprincipal and  complete signed groupoid set. Further, the real compression has the properties of  $S$  listed above, but  is also real and principal.
\end{corollary}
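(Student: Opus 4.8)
The plan is to establish the claimed properties first for the whole signed groupoid set $R^{\Box}$ and then to transfer them to the component $S$, using that each property in question is either inherited by subgroupoids or is \emph{local} at the objects (and so unaffected by passage to a component). The properties of the real compression will then follow formally from Lemma \ref{JOPlem}(f) and Lemma \ref{preandprincipal}(b).

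Since $R$ is preprincipal it is faithful, so $R^{\Box}$ is defined. By Proposition \ref{bhprop} it is finite and complete, and by Proposition \ref{bhowmain} it is also preprincipal. I would then observe that $R^{\Box}$ is simply connected: a morphism $(a,X)\to(b,Y)$ in $G^{\Box}$ is by definition a morphism $f\colon a\to b$ of $G$ satisfying the square conditions, and since $G$ is simply connected there is at most one morphism $a\to b$ in $G$; hence there is at most one morphism $(a,X)\to(b,Y)$ in $G^{\Box}$.

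Now let $S$ be a component of $R^{\Box}$. Connectedness holds by definition, while finiteness and simple connectivity are inherited from $R^{\Box}$ because $S$ is a subgroupoid on a subset of the objects. For completeness and preprincipalness the point is that both are local: they are phrased solely in terms of the weak orders $(\lsub{c}{G}^{\Box},\leq_{c})$ at the objects $c$ of $G^{\Box}$, and every morphism of $G^{\Box}$ with a given codomain $c$ lies in the component containing $c$. Hence $\lsub{c}{G}^{\Box}$, its weak order, its lattice structure, and its atomic morphisms are all the same whether computed in $R^{\Box}$ or in $S$, so $S$ inherits completeness and preprincipalness. (The faithfulness and interval finiteness required by the definition of preprincipal hold because $S$ is a finite faithful subgroupoid set.)

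Finally, write $T:=S^{\mathrm{rec}}$ for the real compression of $S$. Real compression leaves the underlying groupoid and all its weak orders unchanged, so $T$ is again finite, connected and simply connected; by Lemma \ref{JOPlem}(f) it is complete and preprincipal; and by Lemma \ref{preandprincipal}(b), applied to the faithful preprincipal $S$, it is principal. It remains to note that $T$ is real: its positive roots are exactly the parallelism classes $[\alpha]$ of the positive real roots $\alpha$ of $S$, and for such $\alpha$ there is a morphism $g$ with $\alpha\in\Phi_{g}$, whence $[\alpha]$ lies in the inversion set of $g$ computed in $T$; thus $T$ has no imaginary roots. I expect the only step needing genuine (if slight) care to be the locality observation for completeness and preprincipalness; given Propositions \ref{bhprop} and \ref{bhowmain}, everything else is immediate.
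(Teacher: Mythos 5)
Your proof is correct and follows essentially the same route as the paper, whose own proof simply cites Propositions \ref{bhprop} and \ref{bhowmain} together with Lemma \ref{preandprincipal} and leaves the remaining details (simple connectivity of $R^{\Box}$, locality of completeness and preprincipalness under passage to a component, and realness of the real compression) implicit. You have merely made those implicit steps explicit, and correctly so.
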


\begin{example}\label{gbhscexamp} We now discuss the generalized Brink Howlett construction when  $R$ is (faithful),  connected and  simply connected.
We consider a component $R^{\Box}[(a,X)]$ where $a\in \Ob(G)$ and $X\subseteq \lsub{a}{G}$. Write $R^{\Box}[(a,X)]=(H,\Psi,\Psi^{+})$.
 For any morphism
$g\colon c\to a$ in $\lsub{a}{G}$ (for instance, $g\in X$),
we have $\Phi_{g}=\lsub{a}{\Phi}^{+}\cap-\lsub{c}{\Phi}^{+}$
or equivalently,
$\lsub{c}{\Phi}^{+}=(\lsub{a}{\Phi}^{+}\setminus \Phi_{g})\dotcup -\Phi_{g}$. In these formulae,
 terms of the form  $\lsub{u}{\Phi}^{+}$ can be equivalently replaced throughout by $\tensor*[_{u}]{\Phi}{_{\mathrm{re}}^{+}}$.

 Given $g:c\rightarrow a$ as above,   for any morphism $h\colon d\to b$ in $G$, there is a unique commutative diagram
$$\begin{CD}
c @>g>> a\\
@VV kV @VVfV\\
d@>{h}>> b
\end{CD}$$
since $G$ is connected and simply connected. Since $f$ acts trivially on $\Phi$, this diagram  is a square of $R$ if and only if
$\Phi_{h}=\Phi_{g}$,  or equivalently, if and only if $\Phi_{g}\subseteq \lsub{b}{\Phi}^{+}$ and
$\lsub{d}{\Phi}^{+}=(\lsub{b}{\Phi}^{+}\setminus \Phi_{g})\dotcup -\Phi_{g}$.  In particular, for fixed $g$ and $b$,  a square of $R$
as above is unique if it exists: $f$ would have to be   the
unique morphism $a\to b$ and $h$ would have to be the morphism, if such exists, in $\lsub{b}{G}$ with $\Phi_{h}=\Phi_{g}
$, which would be unique by faithfulness of $R$.   Still for fixed
$g$ and $b$, such a square exists if and only if   $\Phi_{g}\subseteq
\lsub{b}{\Phi}^{+}$ and
$(\lsub{b}{\Phi}^{+}\setminus \Phi_{g})\dotcup -\Phi_{g}=\lsub{d}{\Phi}^{+}$ for some (necessarily unique) $d\in \Ob(G)$, in which case  $h$ is the unique morphism $d\to b$ in $\Ob(G)$.

It follows that for $b\in \Ob(G)$,  there is a morphism $f\colon (a,X)\to (b,Y)$ in $H$, for some  $Y\subseteq \lsub{b}{G}$, if and only if
for each $g\in X$, there is some (necessarily unique)  object $u_{g}$ of $G$ such that $\Phi_{g}\subseteq
\lsub{b}{\Phi}^{+}$ and
$(\lsub{b}{\Phi}^{+}\setminus \Phi_{g})\dotcup -\Phi_{g}=\lsub{u_{g}}{\Phi}^{+}$. Then $Y$ is uniquely determined as
\begin{equation*}
Y=\{h\in \lsub{b}{G}\mid h\colon u_{g}\to b \text{ \rm for some
$g\in X$}\}
\end{equation*} or  by
\begin{equation*}
Y=\{h\in \lsub{b}{G}\mid \Phi_{h}=\Phi_{g}\text{ \rm for some $g\in X$}\}
\end{equation*}
and $f\colon (a,X)\to (b,Y)$ is the unique moprhism $f\colon a \to b$  in $G$.

In particular,  there is a faithful  functor $H$ to $G$ which maps  $(b,Y)\mapsto b$ on objects of $H$ and sends a morphism $f\colon(b,Y)\to (c,Z)$ in $H$ to $f\colon b\to c$ in $G$.
One may use this to canonically embed $H$  as a subgroupoid of $G$.

  \end{example}
  \subsection{}  We shall use the following notation and terminology. For any faithful signed groupoid set $R=(G,\Phi,\Phi^{+})$ and $X\subseteq \lsub{a}{G}$, write
 $R\dslash (a,X):=R^{\Box}[(a,X)]$ and call it the \emph{
 hypercontraction} of $R$ at $(a,X)$.
 (If $X\neq \emptyset$, we may  use the convention that morphisms determine their codomains and domains to write this more compactly as $R\dslash X$. If $X=\{g\}$, we may also write  $R\dslash g$ for $R\dslash X$.)  We commend to the reader, as an instructive exercise,  checking  the fact (which is not used in this paper) that a hypercontraction of a hypercontraction of $R$ is (canonically isomorphic to) a hypercontraction of $R$.

  We call a signed groupoid set of the form $R\dslash (a_{0},\emptyset)\dslash s_{1} \ldots \dslash s_{n}$, where $a_{0}$ is an object of $R$ and
 $s_{i}$ is an atomic morphism of    $R\dslash (a_{0},\emptyset)\dslash s_{1}\dslash \ldots \dslash s_{i-1}$  for $i=1,\ldots,n$, a \emph{quasicontraction} of $R$. If $n=0$, this quasicontraction is just the component  $R\dslash(a_{0},\emptyset)=R[a_{0}]$  of $R$ and  if  $n>0$, it is   equal to
 $R\dslash s_{1}\dslash \ldots \dslash s_{i-1}$; in particular, quasicontractions are always connected.  We call  $R\dslash s$, where $s$ is an atomic morphism of $R$, an \emph{elementary quasicontraction} of $R$. Thus,
 the quasicontractions of $R$  form  the smallest set  $\mathfrak{Q}$  of signed groupoid sets with the properties that any component of $R$ is in $\mathfrak{Q}$ and $\mathfrak{Q}$ is closed under the formation of elementary quasicontractions.

  \begin{remark} The above development shows that hypercontraction (and hence quasicontraction) preserves the following subclasses  of faithful, connected  signed groupoid sets:
 (1)  the finite complete ones (2)  the finite, complete,  and preprincipal ones, and  (3) the simply connected ones.
It  also preserves the classes of (4) finite ones  (5) complete ones (6) rootoidal ones and  (7) rootoidal and preprincipal ones. Preservation of class (4) is trivial, and that of classes (5)--(7) can be proved by similar  but slightly  more technical arguments to those  in this paper, or deduced as an application of the  theory of functor rootoids sketched
in \cite{rootoid1}--\cite{rootoid2} (for which proofs will be given elsewhere, though the main  ideas  for the original  proofs all appear either  in op. cit., in this paper or in \cite{DyW}).
  \end{remark}

\section{Main Result and the Proof}

In this section we  state and prove  our main theorem that a finite\footnote{The reader may show as an exercise that ``finite'' is redundant in this statement, but we retain it for emphasis.}, connected, simply connected, real,  principal, complete signed groupoid set has the structure of a simplicial oriented geometry.   Throughout this section, we adopt the conventions of \ref{comp} regarding any connected, simply connected signed groupoid set: the sets of  roots  at the various objects are all canonically identified so all groupoid elements  act by identity maps.

\begin{lemma}\label{acycloidlemma}
 Let $R=(G,\Phi,\Phi^+)$ be a faithful, finite, connected, simply connected   signed groupoid set.  Let
$A:=(\Phi,*,\mathcal{T})$  where  $\mathcal{T}=\{\tensor*[_a]{\Phi}{_{\mathrm{re}}^{+}}\mid a\in \Ob  (G)\})$ and  $*\colon \Phi\to \Phi$ is the map $x\mapsto -x$.

(a)  $A$ is a  preacycloid if and only if $R$ is antipodal. This holds in particular if  $R$ is complete.

(b)  Assume that $A$ is a preacycloid. Then  $R$ is   isomorphic to the signed groupoid set $\SGS (A)$ attached to $A$ in Example \ref{exampleacycloid} (taking $L^{+}:=\tensor*[_{a}]{\Phi}{_{\mathrm{im}}^+}$). Hence properties of $A$ and $R$ are related as in Proposition \ref{acyctosgs}.
\end{lemma}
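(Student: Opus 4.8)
The plan is to exploit the conventions of \ref{comp}: since $G$ is connected and simply connected, every morphism is \emph{the} unique one between its (co)domain and domain and hence, after the identification, acts as the identity on $\Phi$. Thus for the unique morphism $g\colon b\to a$ one has $\Phi_{g}=\lsub{a}{\Phi}^{+}\cap(-\lsub{b}{\Phi}^{+})$; the sets $\tensor*[_{a}]{\Phi}{_{\mathrm{im}}^{\pm}}$ are independent of $a$ (write them $\Phi_{\mathrm{im}}^{\pm}$, using $g(\tensor*[_{b}]{\Phi}{_{\mathrm{im}}^{\pm}})=\tensor*[_{a}]{\Phi}{_{\mathrm{im}}^{\pm}}$ and triviality of the action); and a root is positive-imaginary exactly when it is positive at every object, so that a root is real precisely when it is positive at some object and negative at another. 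Using these observations I would first record the identities $\bigcup_{H\in\mathcal{T}}H=\bigcup_{a}\tensor*[_{a}]{\Phi}{_{\mathrm{re}}^{+}}=\Phi_{\mathrm{re}}$ (every real root is positive at some object) and $L:=\Phi\setminus\bigcup_{H\in\mathcal{T}}H=\Phi_{\mathrm{im}}$. Axiom (A1) is immediate since the involution of a definitely involuted set is strict, and (A2) then holds unconditionally: for $H=\tensor*[_{a}]{\Phi}{_{\mathrm{re}}^{+}}$ one has $H\cap H^{*}=\tensor*[_{a}]{\Phi}{_{\mathrm{re}}^{+}}\cap\tensor*[_{a}]{\Phi}{_{\mathrm{re}}^{-}}=\emptyset$ and $H\cup H^{*}=\Phi_{\mathrm{re}}=\Phi\setminus L$.

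For part (a) the whole content is therefore in axiom (A3), and I would prove (A3) $\iff$ $R$ antipodal. If $R$ is antipodal, let $\omega_{a}\colon w_{a}\to a$ be the maximum element at $a$; since $\omega_{a}$ acts trivially, Lemma \ref{JOPlem}(b) gives $\tensor*[_{w_{a}}]{\Phi}{_{\mathrm{re}}^{+}}=\tensor*[_{a}]{\Phi}{_{\mathrm{re}}^{-}}=H^{*}$, so $H^{*}\in\mathcal{T}$. Conversely, if $H^{*}=\tensor*[_{a}]{\Phi}{_{\mathrm{re}}^{-}}$ equals $\tensor*[_{b}]{\Phi}{_{\mathrm{re}}^{+}}$ for some object $b$, then with the decompositions $\lsub{a}{\Phi}^{+}=\tensor*[_{a}]{\Phi}{_{\mathrm{re}}^{+}}\dotcup\Phi_{\mathrm{im}}^{+}$ and $-\lsub{b}{\Phi}^{+}=\tensor*[_{a}]{\Phi}{_{\mathrm{re}}^{+}}\dotcup\Phi_{\mathrm{im}}^{-}$ a one-line intersection shows the unique morphism $g\colon b\to a$ has $\Phi_{g}=\tensor*[_{a}]{\Phi}{_{\mathrm{re}}^{+}}$, whence $g$ is a maximum element at $a$ by Lemma \ref{JOPlem}(a); as $a$ is arbitrary, $R$ is antipodal. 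The last sentence of (a) follows because completeness implies antipodality by Lemma \ref{JOPlem}(d).

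For part (b) I would exhibit the isomorphism $R\cong\SGS(A)$ explicitly. The assignment $a\mapsto\widetilde{\tensor*[_{a}]{\Phi}{_{\mathrm{re}}^{+}}}$ is a map of object sets which is surjective by the definition of $\mathcal{T}$ and injective because $\tensor*[_{a}]{\Phi}{_{\mathrm{re}}^{+}}=\tensor*[_{b}]{\Phi}{_{\mathrm{re}}^{+}}$ forces $\lsub{a}{\Phi}^{+}=\lsub{b}{\Phi}^{+}$, hence $\Phi_{g}=\emptyset$ for the unique $g\colon b\to a$ and so $a=b$ by faithfulness. With the prescribed choice $L^{+}=\Phi_{\mathrm{im}}^{+}$ the positive parts agree exactly, since $\tensor*[_{a}]{\Phi}{_{\mathrm{re}}^{+}}\cup L^{+}=\lsub{a}{\Phi}^{+}$ and the underlying root set is $\Phi=E$ at every object. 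As both $G$ and the groupoid underlying $\SGS(A)$ are connected and simply connected with all morphisms acting trivially on $\Phi$, any bijection of object sets extends uniquely to an isomorphism of groupoids compatible with the actions; combined with the matching of positive parts this is the desired isomorphism of signed groupoid sets. The closing claim is then automatic: Proposition \ref{acyctosgs}, which relates the properties of $\SGS(A)$ to those of $A$, transfers to $R$ along this isomorphism.

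I expect the main obstacle to be the precise bookkeeping in part (a) relating axiom (A3) to antipodality, where one must keep track of which identities genuinely need a maximum element and verify that the candidate morphism produced from $H^{*}\in\mathcal{T}$ has inversion set exactly $\tensor*[_{a}]{\Phi}{_{\mathrm{re}}^{+}}$. The remaining ingredients, namely identifying the loops with the imaginary roots and observing that every groupoid morphism acts trivially, are routine once the conventions of \ref{comp} and Lemma \ref{JOPlem} are in hand.
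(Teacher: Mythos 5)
Your proof is correct and follows essentially the same route as the paper's: both verify (A1)--(A2) unconditionally with $L=\Phi_{\mathrm{im}}$, reduce (a) to the equivalence of (A3) with antipodality via Lemma \ref{JOPlem}(a),(b),(d), and prove (b) by showing $a\mapsto\tensor*[_a]{\Phi}{_{\mathrm{re}}^{+}}$ is a bijection onto $\mathcal{T}$ using faithfulness, then matching positive parts with $L^{+}=\Phi_{\mathrm{im}}^{+}$. Your explicit computation of $\Phi_{g}=\tensor*[_{a}]{\Phi}{_{\mathrm{re}}^{+}}$ in the converse direction of (a) is exactly the verification the paper leaves implicit, so no gap remains.
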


\begin{proof}
 (a)  We consider the conditions (A1)--(A3) for $A$ to be a preacycloid.
Condition  (A1) is trivial. Condition  (A2) holds with $L:=\Phi_{\mathrm{im}}$. If the  weak order of $R$ at $a$ has a maximum element $\omega_{a}\colon w_{a}\to a$, then  Lemma \ref{JOPlem}(b)  implies that for each $a\in \Ob(G)$,
one has $(\tensor*[_a]{\Phi}{_{\mathrm{re}}^{+}})^{*}=\tensor*[_{w_{a}}]{\Phi}{_{\mathrm{re}}^{+}}$, so (A3) holds.
On the other hand, if (A3) holds, then for any $a\in \Ob(G)$, there is some $w_{a}\in \Ob(G)$ such that $\tensor*[_{w_{a}}]{\Phi}{_{\mathrm{re}}^{+}}=
(\tensor*[_a]{\Phi}{_{\mathrm{re}}^{+}})^{*}$. Letting
$\omega_{a}$ be the unique morphism $w_{a}\to a$ in $G$, we then have $\Phi_{\omega_{a}}=\tensor*[_a]{\Phi}{_{\mathrm{re}}^{+}}$
and so $\omega_{a}$ is the  maximum element of  the weak order at $a$ by Lemma \ref{JOPlem}(a). The final assertion of (a) follows from Lemma \ref{JOPlem}(d).

(b) Note that $G$ has at least one object since it is connected, and so $\mathcal{T}\neq \emptyset$.  Thus the signed groupoid set  $R'$ attached to $A$  in Example \ref{exampleacycloid} is defined.  Its underlying groupoid is a  connected, simply connected groupoid
with objects $\widetilde H$ for $H\in \mathcal{T}$, acting trivally on the strictly involuted set $\Phi$, with $\lsub{\widetilde H}{\Phi}^{+}=H\cup \Phi_{\mathrm{im}}^{+}$. Note that  the map $\Ob(G)\to \mathcal{T}$ given by $a\mapsto \tensor*[_a]{\Phi}{_{\mathrm{re}}^{+}}$ is a bijection. For if $\tensor*[_a]{\Phi}{_{\mathrm{re}}^{+}}=\tensor*[_b]{\Phi}{_{\mathrm{re}}^{+}}$, where $a,b\in \Ob(G)$,  then the morphism $u\colon a\to b$ in $G$ satisfies $\Phi_{u}=\emptyset$ and so $u$ is an identity morphism, by faithfulness of $R$, and $a=b$. Since $\tensor*[_a]{\Phi}{_{\mathrm{re}}^{+}}\cup \Phi_{\mathrm{im}}^{+}=\lsub{a}{\Phi}^{+}$, it easily follows  that  this bijection induces an isomorphism of $R$ with $R'$.
 \end{proof}

 \subsection{} Let $\mathcal{R}$ be the class of all faithful, finite, connected, simply connected, antipodal signed groupoid sets.
Let $\mathcal{A}$ denote the class of all acycloids with at least one (possibly empty) tope. We note that the elementary  quasicontraction  of an acyloid in  $\mathcal{A}$ may have no topes, so  $\mathcal{A}$ is not closed under quasicontractions.

 For any  $R=(G,\Phi,\Phi^+)$ in $\mathcal{R}$, denote the preacycloid  $(\Phi,*,\{\tensor*[_a]{\Phi}{_{\mathrm{re}}^{+}}\mid a\in \Ob  (G)\})$ in $\mathcal{A}$ constructed from $R$ in Lemma   \ref{acycloidlemma} as   $\PA(R)$.
 For any $A$ in $\mathcal{A}$, let $\SGS(A)$ be the finite, faithful, connected, simply connected, signed groupoid set  attached to $A$ in Example \ref{exampleacycloid}.

 \begin{proposition}\label{invprop}  The maps $\SGS\colon \mathcal{A}\to \mathcal{R}$ and $\PA\colon \mathcal{R}\to \mathcal{A}$ induce inverse bijections  between the set of  isomorphism classes of preacycloids in $\mathcal{A}$ and   the set of isomorphism classes of  signed groupoid sets in $\mathcal{R}$.
\end{proposition}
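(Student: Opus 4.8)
The plan is to exhibit $\SGS$ and $\PA$ as mutually inverse on isomorphism classes by computing both round–trips directly, after checking that each map really sends one class into the other. For well–definedness of $\SGS$, note that for $A\in\mathcal{A}$ the signed groupoid set $\SGS(A)$ lies in $\mathcal{R}$ by Proposition \ref{acyctosgs}(a), which already records that $\SGS(A)$ is finite, faithful, connected, simply connected and antipodal, and isomorphic acycloids patently yield isomorphic signed groupoid sets. For $\PA$, Lemma \ref{acycloidlemma}(a) shows that for $R\in\mathcal{R}$ the triple $\PA(R)$ is a preacycloid (its antipodality is exactly what supplies axiom (A3)), and it has a tope since $G$ has at least one object. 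The one genuinely substantive point is that $\PA(R)$ satisfies the acycloid axiom (A4), so that it lands in $\mathcal{A}$ rather than merely in the class of preacycloids; I expect this to be the \emph{main obstacle}, and I would settle it by transporting the characterization of Proposition \ref{acyctosgs}(d) across the identification $R\cong\SGS(\PA(R))$, using the hypotheses on $R$ to force the associated preacycloid to be an acycloid.

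The round–trip $\SGS\circ\PA\cong\mathrm{id}_{\mathcal{R}}$ is essentially already in hand: Lemma \ref{acycloidlemma}(b) states precisely that for $R\in\mathcal{R}$ one has $R\cong\SGS(\PA(R))$, taking $L^{+}:=\tensor*[_{a}]{\Phi}{_{\mathrm{im}}^{+}}$. So this direction requires no new computation beyond invoking that lemma and observing that the isomorphism is natural enough to be compatible with isomorphisms of the input.

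For the other round–trip $\PA\circ\SGS=\mathrm{id}_{\mathcal{A}}$, I would argue by a short direct computation. Write $A=(E,*,\mathfrak{T})\in\mathcal{A}$ and $\SGS(A)=(G,\Phi,\Phi^{+})$ as in Example \ref{exampleacycloid}. Under the canonical identification of \ref{comp}, the ground set of $\PA(\SGS(A))$ is $\Phi=E$ with the same involution $*$, and its topes are the sets $\tensor*[_{a}]{\Phi}{_{\mathrm{re}}^{+}}$. By Proposition \ref{acyctosgs}(b) the positive imaginary roots at every object of $\SGS(A)$ are exactly $L^{+}$, so at the object $\widetilde{H}$ one computes $\tensor*[_{\widetilde H}]{\Phi}{_{\mathrm{re}}^{+}}=(H\cup L^{+})\setminus L^{+}=H$, using $H\cap L=\emptyset$. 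Hence the tope set of $\PA(\SGS(A))$ is precisely $\mathfrak{T}$, giving $\PA(\SGS(A))=A$ on the nose.

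Finally, both constructions carry isomorphisms to isomorphisms, so they descend to maps between the two sets of isomorphism classes; the two displayed identities $\PA\circ\SGS=\mathrm{id}$ and $\SGS\circ\PA\cong\mathrm{id}$ then exhibit the descended maps as mutually inverse bijections, which is the assertion. To summarize the difficulty distribution: the two round–trip identities are routine once Lemma \ref{acycloidlemma} is available, and the crux of the argument is confirming that $\PA$ actually takes values in $\mathcal{A}$, i.e. that the preacycloid $\PA(R)$ inherits axiom (A4) from the structure of $R$.
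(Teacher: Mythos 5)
Your overall skeleton is the paper's: Lemma \ref{acycloidlemma}(b) for the composite $\SGS\circ\PA$, a direct computation of the tope set for $\PA\circ\SGS$, and compatibility with isomorphisms. But the step you single out as the crux --- that $\PA(R)$ satisfies the acycloid axiom (A4) for every $R\in\mathcal{R}$ --- is a genuine gap, and doubly so: the claim is false, and the method you propose cannot succeed. Membership in $\mathcal{R}$ requires only faithful, finite, connected, simply connected and antipodal; preprincipality is \emph{not} among the hypotheses, and by Proposition \ref{acyctosgs}(d) (see also the parenthetical in Lemma \ref{corresp}(b) and Proposition \ref{handareform}(b)), $\PA(R)$ is an acycloid precisely when $R$ is preprincipal. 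So ``transporting Proposition \ref{acyctosgs}(d)'' needs exactly the hypothesis you do not have. Concretely, let $E=\{e_{1},\dots,e_{4},e_{1}^{*},\dots,e_{4}^{*}\}$ and let $\mathfrak{T}$ consist of $H_{1}=\{e_{1},e_{2},e_{3},e_{4}\}$, $H_{2}=\{e_{1},e_{2}^{*},e_{3},e_{4}^{*}\}$, $H_{3}=\{e_{1},e_{2},e_{3}^{*},e_{4}^{*}\}$ and their images under $*$. This is a loopless preacycloid with singleton parallelism classes, and (A4) fails for the pair $(H_{1},H_{2})$: here $H_{1}\setminus H_{2}=\{e_{2},e_{4}\}$, yet neither $(H_{1}\setminus\{e_{2}\})\cup\{e_{2}^{*}\}$ nor $(H_{1}\setminus\{e_{4}\})\cup\{e_{4}^{*}\}$ lies in $\mathfrak{T}$. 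By Proposition \ref{acyctosgs}(a) its associated signed groupoid set lies in $\mathcal{R}$, and $\PA$ of that signed groupoid set is this non-acycloid. Hence no argument can place $\PA(\mathcal{R})$ inside the class of acycloids.

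The resolution is that $\mathcal{A}$, despite the word ``acycloids'' in the sentence defining it, must be read (and is read everywhere else in this section) as the class of \emph{preacycloids} with at least one, possibly empty, tope: this is what the statement of the proposition itself says (``preacycloids in $\mathcal{A}$''), what the sentence introducing $\PA$ says (``the preacycloid \dots in $\mathcal{A}$''), and what makes the equivalences in Lemma \ref{corresp}(b) and Proposition \ref{handareform}(b)--(c) meaningful rather than degenerate. Under that reading your ``main obstacle'' evaporates: Lemma \ref{acycloidlemma}(a) together with connectedness of $G$ already puts $\PA(R)$ in $\mathcal{A}$, and your two round trips then constitute essentially the paper's proof ($R\cong\SGS(\PA(R))$ from Lemma \ref{acycloidlemma}(b), and $\PA(\SGS(A))=A$ from the identification of the positive imaginary roots with $L^{+}$, so that the positive real roots at $\widetilde H$ are exactly $H$). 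One minor citation point: the fact that the positive imaginary roots equal $L^{+}$ is established inside the proof of Proposition \ref{acyctosgs}(b) rather than by its statement; the paper rederives it directly from (A2) and (A3). As written, however, your proof defers its self-declared central step to an argument that cannot work, so the attempt is incomplete.
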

\begin{proof} By Lemma \ref{acycloidlemma} (b), it suffices to show that  if  $A$ is a preacyloid in $\mathcal{A}$ and
 $R=\SGS(A)$, then $\PA(R)=A$.  Write $A=(E,*,\mathcal{T})$. Let  $L=L^{+}\dotcup(L^{+})^{*}$ be  the set of loops of $A$.  Write
 $R=(G,\Phi,\Phi^{+})$. By definition, $\Phi=E$ as involuted set,
 so $\PA(R)=(E,*,\mathcal{T}')$ where $\mathcal{T}'=\{\tensor*[_a]{\Phi}{_{\mathrm{re}}^{+}}\mid a\in \Ob  (G)\})$.
 By definition,  $\Ob(G)=\{\widetilde{H}\mid H\in \mathcal{T}\}$  and
 $\tensor*[_{\widetilde{H}}]{\Phi}{^{+}}=H\dotcup L^{+}$.
 Since $H\cup H^{*}=E\setminus L$ and $H^{*}\in \mathfrak{T}$ for all $H\in \mathfrak{T}\neq \emptyset$, it follows that $L^{+}=   \Phi^{+}_{\mathrm{im}}$. Therefore
 $\tensor*[_{\widetilde H}]{\Phi}{_{\mathrm{re}}^{+}}= \tensor*[_{\widetilde H}]{\Phi}{^{+}}\setminus \tensor*[_a]{\Phi}{_{\mathrm{im}}^{+}}=H$. Hence
 $\mathfrak{T}'=\mathfrak{T}$  as required.
 \end{proof}

\begin{lemma}\label{corresp}
Let $R=(G,\Phi,\Phi^+)$ be  in $\mathcal{R}$.

(a) Let   $g\in \Mor(G)$. Then  $R\dslash g$ is in $\mathcal{R}$ and
  $\PA (R\dslash g)= \PA (R)\dslash\Phi_{g}$  as preacycloids, where $\PA (R)\dslash\Phi_{g}$ is as  defined in subsection \ref{cont}. In particular, $\mathcal{R}$ is closed under quasicontractions.

(b) Assume  that $R$ is preprincipal (or equivalently, $\PA (R)$ is an acycloid). Then  the elementary quasicontractions of $\PA (R)$ are precisely the preacycloids  $\PA(R\dslash s)$  where $s$ is  an atomic morphism of $R$.
 \end{lemma}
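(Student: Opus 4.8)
The plan is to establish (a) by making the generalized Brink--Howlett construction completely explicit in the singleton case $X=\{g\}$, and then to read off (b) from the dictionary between $R$ and $\PA(R)$ already set up in the excerpt. Fix $g\colon c\to a$ and put $X=\{g\}$, so $R\dslash g=R^{\Box}[(a,X)]$. First I would check $R\dslash g\in\mathcal{R}$: it is connected (a component of $R^{\Box}$), and finite and faithful (inherited from $R^{\Box}$); it is antipodal by Lemma \ref{gbhim}(d), whose hypotheses (singleton $X$, antipodal $R$) hold here; and it is simply connected because the faithful functor $(b,Y)\mapsto b$ of Example \ref{gbhscexamp} sends it into the simply connected $G$, forcing any two parallel morphisms of the component to coincide.

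Next I would compute the topes of $\PA(R\dslash g)$. By Example \ref{gbhscexamp} the objects of this component are precisely the pairs $(b,\{h_b\})$ for which $b\in\Ob(G)$ admits a (unique) $h_b\in\lsub{b}{G}$ with $\Phi_{h_b}=\Phi_g$, and by Lemma \ref{gbhim}(d) the positive imaginary roots there are $\Phi_g\dotcup\Phi_{\mathrm{im}}^{+}$; since the positive roots are $\lsub{b}{\Phi}^{+}=\tensor*[_{b}]{\Phi}{_{\mathrm{re}}^{+}}\dotcup\Phi_{\mathrm{im}}^{+}$, the positive real roots there are $\tensor*[_{b}]{\Phi}{_{\mathrm{re}}^{+}}\setminus\Phi_g$. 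Thus the topes of $\PA(R\dslash g)$ are exactly the sets $\tensor*[_{b}]{\Phi}{_{\mathrm{re}}^{+}}\setminus\Phi_g$ over such $b$, and it remains to identify these with the set $\mathfrak{T}_{\Phi_g}$ defining $\PA(R)\dslash\Phi_g$ in subsection \ref{cont}. The key is to show, for $b\in\Ob(G)$, that ``some $h\in\lsub{b}{G}$ has $\Phi_h=\Phi_g$'' is equivalent to ``$\Phi_g\subseteq\tensor*[_{b}]{\Phi}{_{\mathrm{re}}^{+}}$ and $(\tensor*[_{b}]{\Phi}{_{\mathrm{re}}^{+}}\setminus\Phi_g)\cup(-\Phi_g)=\tensor*[_{d}]{\Phi}{_{\mathrm{re}}^{+}}$ for some $d\in\Ob(G)$''. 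I would prove this from the identity $\lsub{d}{\Phi}^{+}=(\lsub{b}{\Phi}^{+}\setminus\Phi_h)\dotcup(-\Phi_h)$ for $h\colon d\to b$ (Example \ref{gbhscexamp}): restricting it to real roots gives the forward direction, while conversely simple connectedness supplies the unique $h\colon d\to b$, whose inversion set $\lsub{b}{\Phi}^{+}\cap(-\lsub{d}{\Phi}^{+})$ computes to $\Phi_g$ by intersecting the decomposition $\Phi=\tensor*[_{b}]{\Phi}{_{\mathrm{re}}^{+}}\dotcup\tensor*[_{b}]{\Phi}{_{\mathrm{re}}^{-}}\dotcup\Phi_{\mathrm{im}}$ with $-\lsub{d}{\Phi}^{+}$. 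This yields $\PA(R\dslash g)=\PA(R)\dslash\Phi_g$; closure of $\mathcal{R}$ under quasicontractions then follows by induction on the length of the defining sequence, the base case being $R\dslash(a_0,\emptyset)=R[a_0]\in\mathcal{R}$.

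For (b), recall that $R$ being preprincipal is equivalent to $\PA(R)$ being an acycloid (Lemma \ref{acycloidlemma}(b) together with Proposition \ref{acyctosgs}(d)). The elementary quasicontractions of the acycloid $\PA(R)$ are by definition the preacycloids $\PA(R)\dslash[e]$ with $e$ a non-loop of $\PA(R)$. Since the loops of $\PA(R)$ are exactly $\Phi_{\mathrm{im}}$, its non-loops are the real roots, and by Proposition \ref{acyctosgs}(c) parallelism of non-loops in $\PA(R)$ agrees with parallelism of real roots in $R$; by Lemma \ref{preandprincipal}(a), under the identification of subsection \ref{comp} the parallelism classes of real roots are precisely the inversion sets $\Phi_s$ of atomic morphisms $s$. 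Hence $\{[e]\mid e\text{ a non-loop}\}=\{\Phi_s\mid s\text{ atomic}\}$, so the elementary quasicontractions $\PA(R)\dslash[e]$ are exactly the $\PA(R)\dslash\Phi_s=\PA(R\dslash s)$, the last equality by (a).

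The main obstacle is the explicit step in (a): identifying the objects of the component $R\dslash g$ and proving the equivalence of the two descriptions of admissible $b$, since this is where the generalized Brink--Howlett construction, the imaginary-root computation of Lemma \ref{gbhim}(d), and the inversion-set bookkeeping must be combined correctly. Once (a) is secured, (b) is a direct translation through Lemma \ref{preandprincipal}(a) and Proposition \ref{acyctosgs}(c).
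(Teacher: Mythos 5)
Your proposal is correct and follows essentially the same route as the paper's proof: part (a) rests on Lemma \ref{gbhim}(d) together with the explicit description of the component's objects from Example \ref{gbhscexamp}, yielding the identification of the topes $\tensor*[_{(b,h)}]{\Psi}{_{\mathrm{re}}^{+}}=\tensor*[_{b}]{\Phi}{_{\mathrm{re}}^{+}}\setminus\Phi_g$ with $\mathfrak{T}_{\Phi_g}$, and part (b) reduces to the fact that parallelism classes of real roots are exactly the inversion sets of atomic morphisms (Lemma \ref{preandprincipal}(a), with Proposition \ref{acyctosgs} supplying the acycloid/preprincipal dictionary). The only difference is presentational: you spell out the verification that $R\dslash g\in\mathcal{R}$ (in particular simple connectedness via the faithful functor to $G$) and re-derive the object criterion, where the paper simply cites Lemma \ref{gbhim}(d) and Example \ref{gbhscexamp}.
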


\begin{proof} (a)  By Lemma \ref{gbhim}(d),  $R\dslash g$ is in $\mathcal{R}$, and denoting it   as $(H,\Psi, \Psi^{+})$, we have $\Psi=\Phi$ as involuted set and $\Psi_{\mathrm{im}}^{+}=\Phi_{\mathrm{im}}^{+}\dotcup \Phi_{g}$.
We have $\PA (R)=(\Phi,*,\mathfrak{T})$ where $\mathfrak{T}=\{\tensor*[_{a}]{\Phi}{^{+}_{\mathrm{re}}}\mid a\in \Ob(G)\}$.
Similarly,  $\PA (R\dslash g)=(\Phi,-, \mathfrak{T}')$
where $\mathfrak{T}'=
\{\tensor*[_{(a,h)}]{\Psi}{^{+}_{\mathrm{re}}}\mid (a,h)\in \Ob(H)\}$. Let $\Gamma:=\Phi_{g}$. By Example \ref{gbhscexamp}, $(a,h)\in \Ob(H)$ if and only if $a\in \Ob(G)$
is such that $\Gamma\subseteq
\tensor*[_{a}]{\Phi}{^{+}_{\mathrm{re}}}$ and
$(\tensor*[_{a}]{\Phi}{^{+}_{\mathrm{re}}}\setminus \Gamma)\cup -\Gamma=\tensor*[_{d}]{\Phi}{^{+}_{\mathrm{re}}}$ for some $d\in \Ob(G)$; in that case,  $\Phi_g=\Phi_h$ and \begin{equation*}
\tensor*[_{(a,h)}]{\Psi}{^{+}_{\mathrm{re}}}=
\tensor*[_{(a,h)}]{\Psi}{^{+}}\setminus \tensor*[_{(a,h)}]{\Psi}{_{\mathrm{im}}^{+}}= \tensor*[_{a}]{\Phi}{^{+}}\setminus (\tensor*[_{a}]{\Phi}{_{\mathrm{im}}^{+}}\dotcup \Phi_{g})=\tensor*[_{a}]{\Phi}{^{+}_{\mathrm{re}}}\setminus\Gamma.
\end{equation*} So, in terms of $\mathfrak{T}$, we have
\begin{equation*}
\mathfrak{T}'=\{H\setminus \Gamma\mid \text{ \rm $ H\in \mathfrak{T}$, $\Gamma\subseteq H$ and $(H\setminus \Gamma)\cup \Gamma^{*}\in \mathfrak{T}$}\}=\mathfrak{T}_{\Gamma}.
\end{equation*} where $\mathfrak{T}_{\Gamma}$ is as in the definition of $\PA (R)\dslash \Gamma$ in subsection \ref{cont}. Taking $g$ to be an atomic morphism of $R$  proves  that   $\mathcal{R}$ is closed under elementary quasicontractions, and hence it is closed under quasicontractions, proving (a).

 (b) The equivalence of the two asumptions is from Proposition \ref{acyctosgs}(d). The conclusion follows from (a) and the definition of elementary quasicontraction, since the parallelism classes in $\Phi_{\mathrm{re}}$ are the inversion sets of  atomic  morphisms, by Lemma \ref{preandprincipal}(a) and the fact groupoid morphisms act trivially on $\Phi$.
\end{proof}

We say that  $R$ in $\mathcal{R}$ and   $A$   in $\mathcal{A}$ (or their respective isomorphism classes) \emph{correspond} if   $R\cong \SGS(A)$ (or equivalently,   $A\cong\PA(R)$).   Part (c) of the following proposition may be viewed as a reformulation of
Handa's characterization of oriented matroids.

\begin{proposition} \label{handareform}
Suppose that  $R$  in $\mathcal{R}$ and   $A$   in $\mathcal{A}$ correspond.

(a) $A$ is simple if and only if $R$ is real and compressed.
More generally, the simplification of $A$ corresponds to the real compression of $R$.

(b) $A$ is an acycloid if and only if $R$ is preprincipal.
In that case, the  isomorphism classes of elementary quasicontractions of $R$ correspond bijectively to the  isomorphism classes of  elementary quasicontractions of $A$.

(c) $A$ is the tope (pre)acycloid of an oriented matroid if and only if every quasicontraction of $R$ is preprincipal.
 \end{proposition}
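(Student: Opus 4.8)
The plan is to reduce the statement to Handa's theorem (Theorem \ref{handa}), which asserts that a preacycloid is the tope (pre)acycloid of an oriented matroid if and only if every one of its quasicontractions is an acycloid. The whole argument then rests on matching quasicontractions of the signed groupoid set $R$ with quasicontractions of the preacycloid $A$ through the operation $\PA$. Two facts drive this matching. First, for any quasicontraction $S$ of $R$ (which lies in $\mathcal{R}$ by Lemma \ref{corresp}(a)), Lemma \ref{acycloidlemma} gives $S\cong\SGS(\PA(S))$, so Proposition \ref{acyctosgs}(d) translates ``$S$ is preprincipal'' into ``$\PA(S)$ is an acycloid.'' Second, Lemma \ref{corresp}(b) says that once $S$ is preprincipal, the elementary quasicontractions of $\PA(S)$ are precisely the preacycloids $\PA(S\dslash s)$ with $s$ an atomic morphism of $S$.

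For the forward implication I would assume $A$ is the tope acycloid of an oriented matroid and prove, by induction on $n$, that every quasicontraction $S=R\dslash s_1\dslash\cdots\dslash s_n$ of $R$ is preprincipal, carrying along the auxiliary statement that $\PA(S)$ is a quasicontraction of $A$. For $n=0$ one has $S=R$ (as $R$ is connected) and $\PA(R)=A$, an acycloid, so $R$ is preprincipal by Proposition \ref{acyctosgs}(d). For the step, write $S=S'\dslash s$ with $S'$ preprincipal by induction; Lemma \ref{corresp} exhibits $\PA(S)=\PA(S')\dslash\Phi_s$ as an elementary quasicontraction of $\PA(S')$, so $\PA(S)$ is again a quasicontraction of $A$. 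By Handa's theorem every quasicontraction of $A$ is an acycloid, hence $\PA(S)$ is an acycloid and $S\cong\SGS(\PA(S))$ is preprincipal by Proposition \ref{acyctosgs}(d).

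For the converse I would assume every quasicontraction of $R$ is preprincipal and show, again by induction, that every quasicontraction $A_0=A,\;A_1=A_0\dslash[e_1],\dots,A_n$ of $A$ is an acycloid; Handa's theorem then yields the conclusion. The induction lifts the quasicontraction to $R$: supposing $A_{i-1}=\PA(S_{i-1})$ for a quasicontraction $S_{i-1}$ of $R$, the hypothesis makes $S_{i-1}$ preprincipal, so $A_{i-1}$ is an acycloid and Lemma \ref{corresp}(b) applies; since $A_i=A_{i-1}\dslash[e_i]$ is an elementary quasicontraction of $A_{i-1}$, it equals $\PA(S_{i-1}\dslash s_i)$ for some atomic morphism $s_i$, and I set $S_i:=S_{i-1}\dslash s_i$. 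Then $A_n=\PA(S_n)$ with $S_n$ preprincipal, so $A_n$ is an acycloid by Proposition \ref{acyctosgs}(d).

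The main obstacle, and the point that forces the inductions rather than a one-shot translation, is the preprincipality hypothesis needed for Lemma \ref{corresp}(b). The unconditional formula $\PA(R\dslash g)=\PA(R)\dslash\Phi_g$ of Lemma \ref{corresp}(a) always holds, but $\PA(S')\dslash\Phi_s$ is a genuine \emph{elementary} quasicontraction (contraction by a single parallelism class of a non-loop) only when $\Phi_s$ is a single parallelism class, which is guaranteed precisely when $S'$ is preprincipal. Isolating this requirement and verifying that it is available at every stage, namely from the inductive hypothesis in the forward direction and from the global assumption in the converse, is what makes the two inductions thread through cleanly.
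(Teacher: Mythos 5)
Your proof of part (c) is correct and follows essentially the same route as the paper: translate via $\PA$ and $\SGS$ using Proposition \ref{acyctosgs}(d) and Lemma \ref{corresp}, then invoke Handa's theorem in its quasicontraction form (Theorem \ref{handa}, (i)$\iff$(iii)). You are in fact more explicit than the paper, whose entire proof of (c) consists of the two sentences ``By Theorem \ref{handa}, $A$ is a preacycloid attached to an oriented matroid if and only if every quasicontraction of $A$ is an acycloid. By (b), this holds if and only if every quasicontraction of $R$ is preprincipal'' --- leaving implicit precisely the two interleaved inductions you write out. Your diagnosis of why the induction is forced (Lemma \ref{corresp}(b) needs preprincipality at each stage, so the matching of quasicontractions must be threaded step by step) is exactly the subtlety the paper itself flags in its proof of (b), where it warns that the statement with ``elementary'' deleted does not follow, since a quasicontraction of $R$ need not be preprincipal.

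Two caveats on completeness. Part (b) is covered by your setup paragraph --- your first ``driving fact'' (Proposition \ref{acyctosgs}(d) through the identification $S\cong\SGS(\PA(S))$) is the stated equivalence, and your second (Lemma \ref{corresp}(b)) is the bijection on elementary quasicontractions, the same citations the paper uses --- but you should say explicitly that these two facts, applied with $S=R$, constitute the proof of (b). Part (a), however, is never addressed: the claim that $A$ is simple iff $R$ is real and compressed, and that simplification corresponds to real compression. This is not a missing idea --- it is exactly Proposition \ref{acyctosgs}(c) transported through the correspondence $R\cong\SGS(A)$, which is all the paper does --- but as written your proposal proves only (b) and (c).
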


 \begin{proof}  (a) This follows from   Lemma  \ref{acyctosgs} (c).

(b) This follows from Proposition  \ref{acyctosgs}(d) and Lemma \ref{corresp}(b). Note however that the corresponding statement with
``elementary'' omitted does not follow, since  $R$  may have a contraction which is not preprincipal, and to which Lemma  \ref{corresp}(b) need not apply.

(c) By Theorem \ref{handa}, $A$ is a preacycloid attached to an oriented matroid  if and only if every quasicontraction of $A$ is an acycloid. By (b), this holds if and only if every quasicontraction of $R$ is preprincipal.
\end{proof}

Let us define an arbitrary signed groupoid set $R$ to be \emph{hereditarily preprincipal} if every quasicontraction of $R$ is preprincipal.  In particular, since every component of $R$ is a quasicontraction of $R$,  by our conventions, it follows that  a hereditarily   preprincipal  signed groupoid set is preprincipal.
\begin{corollary} \label{herpreprin}Let $R$ be a signed groupoid set.
Then $R$ corresponds to the tope acycloid of some finite oriented matroid (which is then uniquely determined up to isomorphism) if and only if $R$ is faithful, finite, connected, simply connected, hereditarily preprincipal and antipodal.
\end{corollary}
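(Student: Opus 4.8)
The plan is to assemble the corollary from the bijection of Proposition~\ref{invprop} and the reformulation of Handa's theorem in Proposition~\ref{handareform}(c); beyond unwinding the definitions of ``corresponds'' and of hereditary preprincipality, the only genuinely separate point to settle is the uniqueness claim. First I would recall that, by definition, $\mathcal{R}$ is exactly the class of faithful, finite, connected, simply connected, antipodal signed groupoid sets, and that $R$ is hereditarily preprincipal precisely when every quasicontraction of $R$ is preprincipal. In particular, since every component of $R$ is a quasicontraction of $R$ (and $R$ itself is such a component when $R$ is connected), a hereditarily preprincipal $R$ is in particular preprincipal.

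For the forward implication, suppose $R\cong\SGS(A)$ where $A$ is the tope acycloid of a finite oriented matroid $M$. By Lemma~\ref{matroidisacycloid}(a) the tope preacycloid $A$ is an acycloid, and since $M$ admits a hemispace (every sharp, in particular $L=\cx(\emptyset)$, is contained in a hemispace) it has at least one tope, so $A\in\mathcal{A}$. By Proposition~\ref{acyctosgs}(a) and the definition of $\SGS$, the signed groupoid set $R\cong\SGS(A)$ lies in $\mathcal{R}$, which gives that $R$ is faithful, finite, connected, simply connected and antipodal. Proposition~\ref{invprop} then gives $A\cong\PA(R)$, so $\PA(R)$ is (up to isomorphism) the tope acycloid of $M$; applying Proposition~\ref{handareform}(c) to the corresponding pair $R$, $\PA(R)$ shows that every quasicontraction of $R$ is preprincipal, i.e.\ $R$ is hereditarily preprincipal.

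For the converse, assume $R$ is faithful, finite, connected, simply connected, hereditarily preprincipal and antipodal. Then $R\in\mathcal{R}$, so by Proposition~\ref{invprop} we may set $A:=\PA(R)\in\mathcal{A}$, and $R\cong\SGS(A)$ corresponds to $A$. Since $R$ is hereditarily preprincipal, every quasicontraction of $R$ is preprincipal, whence Proposition~\ref{handareform}(c) shows that $A$ is the tope (pre)acycloid of some oriented matroid $M$. As the ground set of $A$ is the finite set $\Phi$, the matroid $M$ is finite, so $R$ corresponds to the tope acycloid of the finite oriented matroid $M$, as required.

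It remains to address uniqueness, which I expect to be the only place requiring an argument not already packaged into the cited propositions. Here I would use that the tope acycloid of an oriented matroid determines it up to isomorphism: by the discussion preceding Lemma~\ref{nosubcircuit}, the circuits are recovered as the inclusion-minimal members of $\mathcal{P}(E)$ contained in no tope, and the circuits together with $*$ determine the closure operator; hence any isomorphism of tope acycloids lifts to an isomorphism of the associated oriented matroids. Since $\PA(R)$ is determined up to isomorphism by $R$ (Proposition~\ref{invprop}), the oriented matroid $M$ is likewise determined up to isomorphism. The substantive content — Handa's theorem and the behaviour of the generalized Brink--Howlett construction under quasicontraction — has already been encapsulated in Proposition~\ref{handareform}(c), so the main care needed is simply to check that the three notions ``corresponds'', membership in $\mathcal{R}$, and hereditary preprincipality match the hypotheses of that proposition exactly; this bookkeeping, rather than any new lattice-theoretic or combinatorial difficulty, is the crux.
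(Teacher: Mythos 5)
Your proposal is correct and follows essentially the same route as the paper, which states this corollary without separate proof as an immediate assembly of Proposition~\ref{invprop}, Proposition~\ref{handareform}(c), Proposition~\ref{acyctosgs}(a) and Lemma~\ref{matroidisacycloid}(a), exactly as you do. Your uniqueness argument (topes determine circuits, which determine the closure operator) is also the fact the paper itself relies on, e.g.\ in the proof of Lemma~\ref{firstcondition}.
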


The following is the main result of this paper.

\begin{theorem}\label{mainthm} (a)  Let  $A$ be the tope  (pre)acycloid in $\mathcal{A}$  of  a simplicial oriented matroid $M$. Then $R:=\SGS(A)$ is a faithful, finite, connected, simply connected,  preprincipal and  complete  signed groupoid set.

(b) Let $R$ be a faithful, finite, connected, simply connected,  preprincipal and complete signed groupoid set. Then $R$ is in $\mathcal{R}$ and   $A:=\PA(R)$ is the tope  (pre)acycloid
of a (uniquely determined)  simplicial oriented matroid $M$.

(c) In either (a) or (b),  the following are equivalent:
\begin{enumerate} \item $R$ is real and principal
\item $R$ is   real and compressed
\item $A$ is a simple acycloid
\item $M$ is a simplicial oriented geometry.
\end{enumerate}

(d) In either (a) or (b), every hypercontraction of $R$ is
 a faithful, finite,  connected, simply connected,  preprincipal and complete signed groupoid set, and so also corresponds
 to a (unique up to isomorphism)  simplicial oriented matroid.
 \end{theorem}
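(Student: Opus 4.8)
The plan is to obtain part (d) as a direct consequence of Corollary~\ref{bhmain2} together with part (b), since the substantive analytic content—that the generalized Brink--Howlett construction preserves completeness and preprincipality—has already been established in Propositions~\ref{bhprop} and~\ref{bhowmain}. First I would record that in both settings (a) and (b) the signed groupoid set $R$ is faithful, finite, connected, simply connected, preprincipal and complete: in case (b) this is the standing hypothesis, while in case (a) it is precisely the conclusion of part (a). In either case $R$ thus satisfies the hypotheses of Corollary~\ref{bhmain2}.

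Next I would recall that a hypercontraction of $R$ is, by definition, $R\dslash(a,X)=R^{\Box}[(a,X)]$, which is nothing other than a connected component of the Brink--Howlett signed groupoid set $R^{\Box}$. Corollary~\ref{bhmain2} asserts exactly that \emph{every} connected component of $R^{\Box}$ is again finite, connected, simply connected, preprincipal and complete; moreover, since preprincipality incorporates faithfulness by definition (see subsection~\ref{sgsterm}), each such component is in addition faithful. As $(a,X)$ ranges over all objects of $R^{\Box}$, this covers every hypercontraction of $R$, which establishes the first assertion of (d).

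Finally, having shown that an arbitrary hypercontraction $S:=R\dslash(a,X)$ is itself a faithful, finite, connected, simply connected, preprincipal and complete signed groupoid set, I would simply apply part (b) of the theorem with $S$ in the role of $R$. Part (b) then guarantees that $\PA(S)$ is the tope (pre)acycloid of a simplicial oriented matroid, uniquely determined up to isomorphism; equivalently, $S$ corresponds to a unique simplicial oriented matroid. This finishes (d). The only point requiring care is the purely formal identification of a hypercontraction with a connected component of $R^{\Box}$, so that Corollary~\ref{bhmain2} applies verbatim and the hypotheses of part (b) are inherited; there is no genuine obstacle, as all the real work resides in the preservation results already proved and in the self-propagating character of part (b) under the Brink--Howlett construction.
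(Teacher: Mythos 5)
Your argument for part (d) is correct and is essentially the paper's own proof of that part: the paper likewise observes (inside its proof of (b), via Corollary \ref{bhmain2} and Lemma \ref{JOPlem}) that every hypercontraction of $R$ inherits all the properties assumed of $R$, and then simply applies (b) to that hypercontraction. Your identification of hypercontractions with the connected components of $R^{\Box}$, and your remark that faithfulness is automatic because preprincipality presupposes it, are both accurate; the only cosmetic difference is that the paper also records antipodality (via Lemma \ref{JOPlem}) along the way, which you do not need.

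The gap is one of scope: the statement to be proved is the whole four-part theorem, and your proposal establishes only (d), taking the conclusions of (a) and (b) as available. Using (b) inside the proof of (d) is legitimate (the paper does exactly that, and there is no circularity since (d) is logically downstream of (b)), but nothing in your proposal proves (a), (b) or (c) themselves. Part (b) is where the real content of the theorem sits, and it does not follow from the preservation results alone: the paper proves it by noting that all quasicontractions of $R$ are preprincipal and lie in $\mathcal{R}$ (again by Corollary \ref{bhmain2} and Lemma \ref{JOPlem}), so that Handa's characterization, in the reformulation of Proposition \ref{handareform}(c), yields that $\PA(R)$ is the tope acycloid of an oriented matroid, and then invoking Proposition \ref{acyctosgs}(f) to conclude from completeness that this matroid is simplicial. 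Parts (a) and (c) similarly rest on Proposition \ref{acyctosgs} and Lemma \ref{matroidisacycloid}. As it stands, then, what you have is a complete and faithful proof of (d) conditional on (a)--(b), not a proof of the theorem.
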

\begin{proof} (a) This follows from Lemma \ref{sgsfacts1}(f).

(b)  By Corollary \ref{bhmain2} and Lemma \ref{JOPlem}, all hypercontractions  of $R$ are  finite, connected, simply connected,  preprincipal,   complete (and therefore  antipodal)  signed groupoid sets. In particular, all quasicontractions of $R$ are in  $\mathcal{R}$ and are preprincipal, so $A$ is the  tope acycloid of an oriented matroid $M$ by  Proposition \ref{handareform}(c). Since $R$ is complete,  Proposition \ref{acyctosgs}(f) implies that $M$ is simplicial.

(c) This follows from Proposition \ref{acyctosgs} and Lemma \ref{matroidisacycloid}.

(d) As observed in the proof of  (b), any hypercontraction of $R$ has all the properties assumed of $R$, so this follows by applying (b) to the hypercontractions.
 \end{proof}
 \begin{example}\label{simphyperconexamp} We give a reformulation of Theorem \ref{mainthm} (d) directly in terms of simplicial oriented matroids, leaving the reader to check details.
 Let $M=(E,*,\cx)$ be a simplicial oriented matroid and $A=(E,*,\mathfrak{T})$ be its tope acycloid (that is, $\mathfrak{T}$ is the set of topes of $M$). Fix a tope $H\in \mathfrak{T}$ and a set $X\subseteq \mathfrak{T}$ of topes. Define $U:=\{H\cap K^{*}\mid  K\in X\}$. Let $\mathfrak{T}':=\{F\in \mathfrak{T}\mid U\subseteq \{F\cap K^{*}\mid  K\in \mathfrak{T}\}\}$ and $L:=\bigcap_{F\in \mathfrak{T}'}F$ (note $L\supseteq  \bigcup_{Y\in U}Y$).
 Finally, let $\mathfrak{S}:=\{F\setminus L\mid F\in \mathfrak{T}'\}$. Then $A\dslash(H,X):=(E,*,\mathfrak{S})$ is the tope acycloid of a uniquely determined simplicial oriented matroid,  which one might denote by $M\dslash(H,X)$.
  \end{example}

The following is a straightforward consequence of Theorems \ref{mainthm} and \ref{closure}.
\begin{corollary}
Let $R=(G,\Phi,\Phi_{+})$ be a finite, faithful, connected, simply connected,  preprincipal and rootoidal signed groupoid set.
Let $M=(\Phi, *,\cx)$ be the finite oriented matroid with tope acyloid
 $A=\PA(R)$.  Then for $X\subseteq \Phi$, one has $$\cx(X)=\tensor*{\Phi}{_{\mathrm{im}}}\cup\Biggl(\, \bigcup_{a\in \Ob  (G)}\,\bigcap_{
 \substack{
                    {b\in \Ob  (G)}\\
            {\tensor*[_{b}]{\Phi}{_{\mathrm{re}}^{+}}
            \supseteq X\cap
            \tensor*[_{a}]{\Phi}{_{\mathrm{re}}^{+}} }     }}
\lrsub{b}{\Phi}{\mathrm{re}}^+\Biggr).$$
\end{corollary}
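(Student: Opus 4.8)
The plan is to recognize the asserted identity as a direct transcription of Theorem \ref{closure} into the language of $R$, once the hemispaces of $M$ are matched up with the objects of $G$. Write $A=\PA(R)=(\Phi,*,\{\tensor*[_{a}]{\Phi}{_{\mathrm{re}}^{+}}\mid a\in\Ob(G)\})$; by hypothesis $M$ is the oriented matroid whose tope acycloid is $A$. First I would record two identifications. The set of loops of $M$ equals the set of loops of its tope acycloid, which by the construction of $\PA(R)$ in Lemma \ref{acycloidlemma} is $L=\cx(\emptyset)=\tensor*{\Phi}{_{\mathrm{im}}}$. And, since $a\mapsto\tensor*[_{a}]{\Phi}{_{\mathrm{re}}^{+}}$ is a bijection from $\Ob(G)$ onto the set $\mathfrak{T}$ of topes (proof of Lemma \ref{acycloidlemma}(b), cf. Proposition \ref{invprop}), Theorem \ref{closure} gives that the set $\mathfrak{H}$ of hemispaces of $M$ is $\{H_{a}\mid a\in\Ob(G)\}$, where $H_{a}:=\tensor*[_{a}]{\Phi}{_{\mathrm{re}}^{+}}\cup\tensor*{\Phi}{_{\mathrm{im}}}$, and $a\mapsto H_{a}$ is again a bijection.

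With these in hand I would apply Theorem \ref{closure}(b) to write $\cx(X)=\bigcup_{a\in\Ob(G)}\cx_{H_{a}}(X\cap H_{a})$, and then Theorem \ref{closure}(a) to rewrite each summand as $\cx_{H_{a}}(X\cap H_{a})=\bigcap_{b:\,H_{b}\supseteq X\cap H_{a}}H_{b}$, reindexing the intersection over $\Ob(G)$ via $b\mapsto H_{b}$. The one place that needs care is the simplification of the indexing condition $H_{b}\supseteq X\cap H_{a}$ to $\tensor*[_{b}]{\Phi}{_{\mathrm{re}}^{+}}\supseteq X\cap\tensor*[_{a}]{\Phi}{_{\mathrm{re}}^{+}}$. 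This rests on two observations: $\tensor*{\Phi}{_{\mathrm{im}}}\subseteq H_{b}$ for every $b$ (loops lie in every closed set, hence in every hemispace), so the part of the condition coming from $X\cap\tensor*{\Phi}{_{\mathrm{im}}}$ holds automatically; and no real root is imaginary, so an element of $X\cap\tensor*[_{a}]{\Phi}{_{\mathrm{re}}^{+}}$ lies in $H_{b}=\tensor*[_{b}]{\Phi}{_{\mathrm{re}}^{+}}\cup\tensor*{\Phi}{_{\mathrm{im}}}$ if and only if it lies in $\tensor*[_{b}]{\Phi}{_{\mathrm{re}}^{+}}$.

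Finally I would perform the set-theoretic simplification. Since $\tensor*{\Phi}{_{\mathrm{im}}}$ is a common summand of every $H_{b}=\tensor*[_{b}]{\Phi}{_{\mathrm{re}}^{+}}\cup\tensor*{\Phi}{_{\mathrm{im}}}$, distributivity of intersection over this fixed part gives $\bigcap_{b:\,\ldots}H_{b}=\tensor*{\Phi}{_{\mathrm{im}}}\cup\bigcap_{b:\,\ldots}\tensor*[_{b}]{\Phi}{_{\mathrm{re}}^{+}}$, the index set being nonempty as it contains $b=a$. Taking the union over $a\in\Ob(G)$, nonempty since $G$ is connected, pulls $\tensor*{\Phi}{_{\mathrm{im}}}$ outside and produces exactly the claimed formula. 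I do not expect a genuine obstacle here: the whole argument is a transcription of Theorem \ref{closure}, and the only delicate bookkeeping is the treatment of the loops (imaginary roots) in the indexing condition and in the distributivity step, which I would make explicit rather than leave implicit.
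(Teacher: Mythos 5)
Your proposal is correct and follows exactly the route the paper intends: the paper gives no written proof beyond declaring the corollary ``a straightforward consequence of Theorems \ref{mainthm} and \ref{closure},'' and your argument is precisely that consequence spelled out, using the dictionary $a\mapsto \tensor*[_{a}]{\Phi}{_{\mathrm{re}}^{+}}$ between objects and topes, the identification of the loops $\cx(\emptyset)$ with $\tensor*{\Phi}{_{\mathrm{im}}}$, and Theorem \ref{closure}(a),(b). The bookkeeping you flag (absorbing the imaginary roots into the indexing condition and the distributivity step, with nonemptiness of the index sets) is exactly the ``straightforward'' content the paper leaves implicit.
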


For the remainder of this subsection,
let $ R= (G,\Phi,\Phi^+)$ be a finite,  connected, simply connected, real,  compressed, hereditarily  preprincipal and  antipodal  signed groupoid set.  By Proposition \ref{handareform},
the corresponding acycloid  $\PA(R)$  is the tope acycloid of an oriented geometry $M=(\Phi,*,\cx)$. We shall give a condition for realizability of $M$ (or more generally, its embeddability in another oriented matroid) involving an analogue of  a standard condition on the relation of simple roots and positive roots of root systems in real vector spaces.  First we check that  two possible  notions of simple roots for $R$ agree, though this is not strictly necessary.

\begin{lemma} Let $S$ be the set of simple morphisms of $G$. Then for any  $a\in \Ob(G)$, one has  $\bigcup_{s\in S\cap \lsub{a}{G}} \Phi_{s}=\ex(\lsub{a}{\Phi}^{+})$.  We denote this set by $\lsub{a}{\Pi}$ and call it the set of simple roots at $a$.\end{lemma}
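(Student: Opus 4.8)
The plan is to make the left-hand side completely explicit as a set of roots and then match it with $\ex(H)$, where $H:=\lsub{a}{\Phi}^+$, using that $H$ is a hemispace of the reduced oriented geometry $M$ together with the description of its closure operator in Theorem~\ref{closure}.

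First I would identify the simple morphisms at $a$. Since $\PA(R)$ is the tope acycloid of the oriented geometry $M$, it is a simple acycloid by Lemma~\ref{matroidisacycloid}(b), so $R$ is real and principal by Proposition~\ref{acyctosgs}(e); hence by Lemma~\ref{sgsfacts}(f) every atomic morphism is simple, while simple morphisms are always atomic. Writing $H:=\lsub{a}{\Phi}^+=\tensor*[_{a}]{\Phi}{_{\mathrm{re}}^{+}}$ (equal since $R$ is real), a morphism $u\colon\widetilde K\to\widetilde H$ in $G$ has $\Phi_u=H\cap K^{*}=H\setminus K$, and it is simple precisely when $\Phi_u=\{\alpha\}$ for a single $\alpha\in H$; in that case $H\setminus\{\alpha\}\subseteq K$ and $\alpha^{*}\in K$, and since $K\cup K^{*}=\Phi$ and $K\cap K^{*}=\emptyset$ (here $M$ is reduced, so $L=\emptyset$) one gets $K=(H\setminus\{\alpha\})\cup\{\alpha^{*}\}$. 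Conversely any $\alpha\in H$ with $(H\setminus\{\alpha\})\cup\{\alpha^{*}\}\in\mathcal{T}$ yields such a simple morphism. Thus
\[
\bigcup_{s\in S\cap\lsub{a}{G}}\Phi_s=\{\alpha\in H\mid (H\setminus\{\alpha\})\cup\{\alpha^{*}\}\in\mathcal{T}\},
\]
and, $M$ being reduced, the topes in $\mathcal{T}$ are exactly the hemispaces of $M$.

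Next I would dispatch the inclusion into $\ex(H)$, which is nearly immediate. If $\alpha\in H$ and $K:=(H\setminus\{\alpha\})\cup\{\alpha^{*}\}$ is a tope, then $K$ is $\cx$-closed and contains $H\setminus\{\alpha\}$, so $\cx(H\setminus\{\alpha\})\subseteq K$; as $\alpha^{*}\in K$ and $K\cap K^{*}=\emptyset$ we have $\alpha\notin K$, whence $\alpha\notin\cx(H\setminus\{\alpha\})$. Since $\cx_H$ is a reduced finitary anti-exchange closure on $H$ (subsections~\ref{convgeom}, \ref{extreme}), its unique minimal generating set $\ex(H)$ consists exactly of those $\alpha\in H$ with $\alpha\notin\cx(H\setminus\{\alpha\})$, so $\alpha\in\ex(H)$.

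Finally I would prove the reverse inclusion, where the real content lies. Let $e\in\ex(H)$, so $e\notin\cx(H\setminus\{e\})$ by the same characterization. By Theorem~\ref{closure}(a) applied to $X=H\setminus\{e\}\subseteq H$, one has $\cx(H\setminus\{e\})=\bigcap\{K\mid K \text{ a hemispace of }M,\ K\supseteq H\setminus\{e\}\}$; since $e$ lies outside this intersection, there is a hemispace $K\supseteq H\setminus\{e\}$ with $e\notin K$. Then $e^{*}\in K$ (as $K\cup K^{*}=\Phi$), so $K\supseteq(H\setminus\{e\})\cup\{e^{*}\}$; since $(H\setminus\{e\})\cup\{e^{*}\}$ already has union with its involute equal to $\Phi$ whereas $K\cap K^{*}=\emptyset$, this containment is an equality, and $(H\setminus\{e\})\cup\{e^{*}\}$ is a hemispace, hence a tope, placing $e$ in the left-hand side. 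I expect this last step to be the main obstacle: recognizing that the defining property $e\notin\cx(H\setminus\{e\})$ of an extreme element produces, directly through the intersection formula of Theorem~\ref{closure}(a), a separating hemispace that is then forced to be the single sign-flip $(H\setminus\{e\})\cup\{e^{*}\}$. The reduction to this via the identification of simple morphisms and the easy inclusion into $\ex(H)$ are comparatively routine.
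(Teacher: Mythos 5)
Your proof is correct and follows essentially the same route as the paper's: identify the simple morphisms at $a$ as the single sign-flips $(H\setminus\{\alpha\})\cup\{\alpha^{*}\}$ of the tope $H=\lsub{a}{\Phi}^{+}$, obtain the inclusion into $\ex(H)$ from the closedness of a hemispace containing $H\setminus\{\alpha\}$ but not $\alpha$, and for the reverse inclusion use Theorem \ref{closure}(a) to produce a separating hemispace that is then forced to equal $(H\setminus\{e\})\cup\{e^{*}\}$. The only cosmetic difference is that you package the paper's inline minimal-generating-set arguments into the standard characterization $\ex(H)=\{\alpha\in H\mid \alpha\notin\cx(H\setminus\{\alpha\})\}$ for anti-exchange closures, which is justified by exactly the uniqueness statement in \ref{extreme} that the paper invokes.
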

\begin{proof} We shall use the properties of extreme elements mentioned in \ref{extreme}. Suppose $s\colon b\to a$ is simple.
Write $\Phi_{s}=\{\alpha\}$ where $\alpha\in \Phi$. Then
$\lsub{a}{\Phi}^{+}\cap -\lsub{b}\Phi^{+}=\{\alpha\}$. This implies that $\lsub{a}{\Phi}^{+}\setminus \{\alpha\}=
\lsub{a}{\Phi}^{+}\cap \lsub{b}{\Phi}^{+}$ is $\cx$-closed. So $\alpha\in \ex(\lsub{a}{\Phi}^{+})$, or else we would have
\begin{equation*}
\lsub{a}{\Phi}^{+}= \cx(\ex(\lsub{a}{\Phi}^{+}))\subseteq
\cx(\lsub{a}{\Phi}^{+}\setminus \{\alpha\})=\lsub{a}{\Phi}^{+}\setminus \{\alpha\}
\end{equation*} which is a contradiction.  This proves $\bigcup_{s\in S\cap \lsub{a}{G}} \Phi_{s}\subseteq\ex(\lsub{a}{\Phi}^{+})$.

For the reverse inclusion, let $\alpha\in \ex(\lsub{a}{\Phi}^{+})$.
Then $\cx(\lsub{a}{\Phi}^{+}\setminus\{\alpha\})=\lsub{a}{\Phi}^{+}\setminus\{\alpha\}$ since otherwise, $\cx(\lsub{a}{\Phi}^{+}\setminus\{\alpha\})=\lsub{a}{\Phi}^{+}$, and there would be a minimal subset $\Gamma\subseteq  \lsub{a}{\Phi}^{+}\setminus\{\alpha\}$ with $\cx(\Gamma)=\lsub{a}{\Phi}^{+}$. But then $\Gamma$ would  be a  minimal subset of    $\lsub{a}{\Phi}^{+}$ satisfying
$\cx(\Gamma)=\lsub{a}{\Phi}^{+}$, so $\Gamma=\ex(\lsub{a}{\Phi}^{+})$, contrary to $\alpha\in\ex(\lsub{a}{\Phi}^{+})\setminus \Gamma$. Since $\lsub{a}{\Phi}^{+}\setminus\{\alpha\}$
is a closed  subset of a hemispace, it is an intersection of hemispaces by Theorem \ref{closure}. This implies that $(\lsub{a}{\Phi}^{+}\setminus \{\alpha\})\cup \{\alpha^{*}\}$ is a hemispace,
say equal to $\lsub{b}\Phi$. The unique morphism $s\colon b\to a$
has $\Phi_{s}=\{\alpha\}$. Thus, $\alpha\in \Phi_{s}$ where  $s\in S\cap \lsub{a}{G}$, as required.
\end{proof}

For the proof of the theorem below, we use the following definition.

\begin{definition}We say that a subset $A$ of $\Phi$ is a \emph{half set} if $A\dotcup  A^{*}=\Phi$.  For half sets $A$ and $B$ of $\Phi$,  we define $d(A,B)=\frac{\vert A+B\vert }{2}=\frac{\vert \Phi\vert}{2} -\vert A\cap B\vert $ where $+$ denotes the symmetric difference operation  (that is, $A+B:=(A\cup B)\setminus (A\cap B)$).
\end{definition}

\begin{definition}
Let  $R=(G,\Phi,\Phi^+)$ as above and let  $M'=(E,-,c)$ be any (possibly infinite) oriented matroid.  We define an \emph{embedding} of $R$ in $M'$ to be an injective   map  $f\colon \Phi\rightarrow E$  such that $f(\alpha^{*})=-f(\alpha)$ for any $\alpha\in \Phi$ and one has $c (f(\lsub{a}{\Pi}))\cap f(\Phi)=f(\lsub{a}{\Phi}^+)$ for any $a\in \Ob  (G)$.

If there is real vector space $V$ such  that $M'$ is the standard oriented matroid
$M'=(E, -,c)$ (where $E=V\setminus \{0\}$,  $-x$ is the  additive inverse of $x\in E$ and
$c(X)=\cone(X)\cap E$ for all $X\subseteq E$), we also  call an embedding  $f$ of $R$ in $M'$ a \emph{realization} of $R$ in $V$.\end{definition}

\begin{theorem}  Let $R$ be as  above, $M'=(E,-,c)$ be any (possibly infinite) oriented matroid and $f\colon \Phi\to E$ be an embedding of $R$ in $M'$ in the above sense.  Then $f$ induces an isomorphism of  oriented matroids from $M$ (the oriented matroid associated to $R$) to the restriction of  $M'$ to $f(\Phi)$.
\end{theorem}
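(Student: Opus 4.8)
The plan is to apply Lemma~\ref{firstcondition} to the map $f\colon\Phi\to E$, taking $(E,*,\cx)$ there to be $M=(\Phi,*,\cx)$ and $(F,-,c)$ to be $M'$. Write $N:=M'_{f(\Phi)}$ for the restriction of $M'$ to the finite set $f(\Phi)$; the conclusion of Lemma~\ref{firstcondition} is exactly that $f$ is an isomorphism of $M$ onto $N$. Since $R$ is real, compressed and hereditarily preprincipal, $M$ is a finite reduced simple oriented matroid (an oriented geometry), so $\cx(\emptyset)=\emptyset$ and its hemispaces are exactly the half sets $\lsub a\Phi^+=\tensor*[_a]{\Phi}{_{\mathrm{re}}^+}$ ($a\in\Ob(G)$), with $\ex(\lsub a\Phi^+)=\lsub a\Pi$. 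The two cheap hypotheses of Lemma~\ref{firstcondition} are immediate: $f$ is injective with $f(\alpha^*)=-f(\alpha)$ by assumption. For the loop hypothesis $f(\cx(\emptyset))=f(\Phi)\cap c(\emptyset)$ it suffices, as $\cx(\emptyset)=\emptyset$, to see $f(\Phi)\cap c(\emptyset)=\emptyset$: from $c(\emptyset)\subseteq c(f(\lsub a\Pi))$ and the embedding identity one gets $c(\emptyset)\cap f(\Phi)\subseteq f(\lsub a\Phi^+)$ for every $a$, and intersecting the instances $a$ and $w_a$ (where $\omega_a\colon w_a\to a$ is the maximum, so $\lsub{w_a}\Phi^+=(\lsub a\Phi^+)^*$ by Lemma~\ref{JOPlem}(b)) gives $c(\emptyset)\cap f(\Phi)\subseteq f(\lsub a\Phi^+\cap(\lsub a\Phi^+)^*)=\emptyset$.

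Everything then reduces to the hemispace biconditional of Lemma~\ref{firstcondition}, which I will prove in the sharp form $\mathfrak H=\mathfrak H'$, where $\mathfrak H:=\{\lsub a\Phi^+\mid a\in\Ob(G)\}$ is the set of hemispaces of $M$ and $\mathfrak H'$ is the set of hemispaces of $N$ (both regarded, via $f$, as collections of half sets of $\Phi$). The inclusion $\mathfrak H\subseteq\mathfrak H'$ is the easy half: for $H=\lsub a\Phi^+$ the embedding identity $c(f(\lsub a\Pi))\cap f(\Phi)=f(H)$ together with $\lsub a\Pi\subseteq H=\cx(\lsub a\Pi)$ and the monotonicity and idempotence of $c$ gives $c(f(H))\cap f(\Phi)=f(H)$, so $f(H)$ is an $N$-closed half set, i.e. an $N$-hemispace. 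A first consequence is that $N$ is itself simple: it is reduced by the loop computation, and if $f(\alpha),f(\gamma)$ were $N$-parallel then, since $\mathfrak H\subseteq\mathfrak H'$, they would have equal sign on all topes of $M$, hence be $M$-parallel, whence $\alpha=\gamma$ as $M$ is simple.

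The key technical step is the extreme-point correspondence $\ex_N(f(H))=f(\lsub a\Pi)$ for each $H=\lsub a\Phi^+$. The embedding identity says $f(\lsub a\Pi)$ generates $f(H)$ in $N$, so $\ex_N(f(H))\subseteq f(\lsub a\Pi)$; for minimality I use that, for $\pi\in\lsub a\Pi=\ex(H)$, the set $A_\pi:=(H\setminus\{\pi\})\cup\{\pi^*\}$ is again a hemispace of $M$ (flipping an extreme element, exactly as in the verification of (A4) in the proof of Lemma~\ref{matroidisacycloid}), and hence lies in $\mathfrak H\subseteq\mathfrak H'$ and is $N$-closed. Since $\lsub a\Pi\setminus\{\pi\}\subseteq A_\pi$, were $f(\pi)\in c(f(\lsub a\Pi\setminus\{\pi\}))$ we would get $f(\pi)\in c(f(A_\pi))\cap f(\Phi)=f(A_\pi)$, forcing $\pi\in A_\pi$ while $\pi^*\in A_\pi$, impossible in a half set. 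Thus $f(\lsub a\Pi)$ is minimal and $N$-independent, giving $\ex_N(f(H))=f(\lsub a\Pi)$.

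Finally I prove the reverse inclusion $\mathfrak H'\subseteq\mathfrak H$, which I expect to be the main obstacle, by descent on the half-set distance $d$. Given $B\in\mathfrak H'$, choose $H_0\in\mathfrak H$ minimizing $d(H_0,B)$; if $d(H_0,B)=0$ then $H_0=B\in\mathfrak H$. Otherwise, since $N$ is a simple oriented geometry its topes $\mathfrak H'$ form a simple acycloid (Lemma~\ref{matroidisacycloid}), so axiom (A4) yields $e\in H_0\setminus B$ with $K:=(H_0\setminus\{e\})\cup\{e^*\}\in\mathfrak H'$ and $d(K,B)=d(H_0,B)-1$. Because $K$ is an $N$-hemispace, $f(H_0\setminus\{e\})=f(H_0)\cap f(K)$ is $N$-closed, which forces $f(e)\in\ex_N(f(H_0))=f(\lsub{a_0}\Pi)$ by the previous step, i.e. $e\in\lsub{a_0}\Pi$ (writing $H_0=\lsub{a_0}\Phi^+$). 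But then, $e$ being an extreme element of the $M$-hemispace $H_0$, the flip $K=(H_0\setminus\{e\})\cup\{e^*\}$ is an $M$-hemispace, i.e. $K\in\mathfrak H$ — contradicting the minimality of $d(H_0,B)$. Hence $d(H_0,B)=0$ and $B\in\mathfrak H$, giving $\mathfrak H=\mathfrak H'$. The hemispace biconditional now follows (a half set $A$ satisfies $c(f(A))\cap f(\Phi)=f(A)$ iff $f(A)\in\mathfrak H'=\mathfrak H$, iff $A$ is an $M$-hemispace, iff $\cx(A)=A$), and Lemma~\ref{firstcondition} delivers the desired isomorphism of oriented matroids.
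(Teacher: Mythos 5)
Your proof is correct, and it shares the paper's skeleton — reduce to Lemma~\ref{firstcondition}, get the easy direction from the embedding identity, and handle the hard direction by minimizing the half-set distance $d$ and flipping simple roots — but the execution of the hard direction is genuinely different. You route through the restriction $N:=M'_{f(\Phi)}$: you prove $N$ is an oriented geometry, establish the extreme-point correspondence $\ex_N(f(\lsub{a}{\Phi}^+))=f(\lsub{a}{\Pi})$, and then run a descent using acycloid axiom (A4) for $N$'s tope acycloid (via Lemma~\ref{matroidisacycloid}), transferring the flip back to $M$ through the extreme-point identification. The paper needs none of this structure on $N$: for a fixed $N$-closed half set $A$ it takes $\lsub{b}{\Phi}^+$ minimizing $d(\lsub{b}{\Phi}^+,A)$, shows $\lsub{b}{\Pi}\subseteq A$ by the same flip-decreases-distance argument (performed entirely inside $M$), and then a single monotonicity step finishes: $f(A)=c(f(A))\cap f(\Phi)\supseteq c(f(\lsub{b}{\Pi}))\cap f(\Phi)=f(\lsub{b}{\Phi}^+)$, which forces $A=\lsub{b}{\Phi}^+$ since both are half sets. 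So the paper's argument is shorter and more elementary, substituting one closure-operator inclusion for your entire apparatus of simplicity of $N$, $\ex_N$, and (A4). On the other hand, your write-up is more complete in one respect: you explicitly verify the loop hypothesis of Lemma~\ref{firstcondition}, namely $c(\emptyset)\cap f(\Phi)=\emptyset$ (needed because $M'$ is not assumed reduced), by intersecting the embedding identities at $a$ and $w_a$; the paper's proof passes over this hypothesis in silence, though it follows by exactly the argument you give. Your detour also yields, en route, that $N$ is an oriented geometry with the expected extreme points — facts which, in the paper's treatment, only emerge as consequences of the final isomorphism.
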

\begin{proof}
We use Lemma \ref{firstcondition} for $M$ and $M'$.  We need only verify its hypotheses. The definition of embedding  guarantees that for any hemispace $\lsub{a}{\Phi}^{+}$ of $M$, one has  $c(f(\lsub{a}{\Phi}^+))\cap f(\Phi)=f(\lsub{a}{\Phi}^+)$.  If the hypotheses in  Lemma  \ref{firstcondition} fails, there is  therefore   some half set $A$ of $ \Phi$ such that   $c (f(A))\cap f(\Phi) =c(f(A))$ but $A\neq \lsub{a}{\Phi}^+$ for any $a\in \Ob  (G)$.
Let $\lsub{b}{\Phi}^+$ be such that $d(\lsub{b}{\Phi}^+,A)$ is minimal. We claim that $\lsub{b}{\Pi}\subseteq A$. Suppose that there exists $\beta\in \lsub{b}{\Pi}\setminus A$. Then $(\lsub{b}{\Phi}^+\setminus \{\beta\})\cup\{-\beta\}=\lsub{c}{\Phi}^+$ for some $c\in \Ob  (G)$.
Then $d(\lsub{c}{\Phi}^+, A)=d(\lsub{b}{\Phi}^+, A)-1$. This is a contradiction. So we have established the claim.
 But $f(A)=c (f(A))\cap f(\Phi)\supseteq  c (f(\lsub{b}{\Pi}))\cap f(\Phi)=f(\lsub{b}{\Phi}^+)$. So this forces $f(A)=f(\lsub{b}{\Phi}^+)$ and thus $A=\lsub{b}{\Phi}^+$, which is a contradiction.\end{proof}

 \subsection{} Suppose  $R$ has a realization $f$ in the above sense. Then the associated oriented geometry $M$ is isomorphic to the oriented geometry attached to the set of vectors $f(\Phi)$ in $V$.
 Let  $V_{0}:=\mathrm{span}(f(\Phi))$ in $V$.
 As discussed in Example \ref{coneexamp}, the linear  hyperplanes in $V_{0}$ orthogonal to the elements of $f(\Phi)$ give a real, finite, essential, linear  hyperplane arrangement in $V_{0}$, associated to $M$. In particular,  the chambers of this arrangement correspond bijectively to the topes of $M$ and thus also to the objects of $G$.

 The above all  applies  in particular when $R$ is a finite, connected simply connected, principal and complete signed groupoid set. In that case $M$ is a simplicial  oriented geoemtry and the above hyperplane arrangement is simplicial.

\section{Final comments and open  questions}
\subsection{} In parts of this paper, we  have worked only with connected and simply connected signed groupoid sets.
For many purposes (though of course not, for example,  in parameterizing the components of signed groupoids sets), this does not involve a significant loss of generality:  one can work with the (closely related)  universal covers of the components. In particular, our main  results apply to universal covers of signed groupoid sets attached to finite  Coxeter groups and  Weyl and Coxeter groupoids. It is already well known that  these covers correspond to special
realizable simplicial geometries and to  simplicail
hyperplane arrangements, which have been studied quite deeply in the finite Coxeter group case.

\subsection{} One   consequence of our main theorem
is that it permits a purely algebraic and combinatorial construction   and study of  the underlying oriented matroid of the standard root
 system of a finite Coxeter group, and associated structures, without involving the standard root system in a real vector space, as follows\footnote{A better understanding of such matters is relevant  to  study of root systems of infinite Coxeter groups (for which there is no canonical $W$-stable  oriented matroid structure on the abstract root system, often many non-isomorphic   such structures arising   from  various realized  root systems, and  conjecturally many  more non-realizable such structures, though none are known so far.)}.

 Given a Coxeter system
 $(W,S)$, let $T=\{wsw^{-1}\mid w\in W, s\in S\}$ be the set of abstract reflections.  Regard $\Phi:=T\times\{\pm\}$ as strictly
  involuted set with involution $-(t,\pm)=(t,\mp)$ and $W$-action
  determined by $s(s,\pm)=(s,\mp)$ and   $s(t,\pm)=(sts,\pm)$ if
  $t\neq s$, for $s\in S$ and $t\in T$. We call $\Phi$ the standard abstract root system of $(W,S)$; see    \cite[Ch IV, \S1, no. 4]{Bour} or \cite[1.3]{bjornerbrenti}.
  Define \begin{equation*}
  \text{\rm $\Phi^{+}:=T\times\{+\}$\quad  and \quad
  $\mathfrak{T} :=\{w(\Phi^{+})\mid w\in W\}$.}
  \end{equation*}
   If $W$ is finite (as we assume for now) then
   $A=(\Phi,-,\mathfrak{T})$ is the tope acycloid of a   simplical
   oriented geometry on which $W$ acts as a group of
   automorphisms.

   This is  easy to see using the natural
   identification of $\Phi$ as $W$-set with the standard root
   system of $(W,S)$ in a real vector space, but it is not so
   straightforward to verify otherwise (for example, from Handa's
  (or other)   characterizations of oriented matroids in terms of their topes, or
  in terms of axioms for the oriented  circuits or closure operator as may be defined in terms of  $A$; see \cite{OrMatBook}). Using our theorem, the result follows directly, though we won't give details; the most delicate fact required  is that the weak order of a (finite) Coxeter group is a meet semilattices (which may be proved without recourse to realized  root systems, for instance as in \cite{bjornerbrenti}).

\subsection{} Let $R$ denote the (finite, principal, complete)
signed groupoid set attached above  to the finite Coxeter system
$(W,S)$.  Recall we  defined hypercontractions of $R$ as
componets of the generalized Brink Howlett construction applied
to $R$.  We remark that in general, many  hypercontractions
will be ``trivial'' or ``small'', and many sets of them will all be canonically isomorphic for general reasons related to certain  Galois connections.  Nonetheless,  there is still considerable richness in the class of hypercontractions (for example, they include the components of the original groupoid studied by Brink and Howlett).

Our main theorem implies the hypercontractions all have the same properties as listed for  $R$, so they (more precisely, the real compressions of their universal covers) are isomorphic to signed groupoid sets associated to simplicial oriented geometries. Though we haven't
given details in this paper, there are  more general constructions in categories of  rootoids (functor rootoids
and categorical limits, for instance)   which  may be interpreted as
constructions preserving   the relevant class of  signed groupoid
sets, and so  produce simplicial oriented geometries from families
of simplicial oriented geometries, by the main theorem of this
paper.

  An important  point is that while $R$ is known to be associated to a realizable simplicial oriented geometry, it is not
known whether simplicial oriented geometries associated to its hypercontractions are realizable. Realizability is currently
known to hold only in very special classes of examples, by techniques which do not extend in an obvious way to the general situation. Similar remarks apply
to signed groupoid sets $R$ associated to Coxeter groupoids and indeed to those associated to realizable simplicial oriented geometries in general. Approaches to the study  of Coxeter groupoids and Weyl groupoids in the literature require the existence of a suitable realized root system in order to develop their basic properties, and an abstract construction of the associated signed groupoid set and its corresponding simplicial oriented geometry, as discussed above for finite Coxeter groups, is not currently available.

\subsection{} With these general comments in hand, we list below a few of many questions and problems we leave open in this work. It is quite possible  that simple counterexamples, constructions or arguments could settle  some of them.

\begin{enumerate}
\item Is the simplicial oriented geometry associated to  (the real compression of the) hypercontraction of the signed groupoid set attached to a realizable simplicial oriented geometry itself realizable?  If so, is the analogous statement true for other constructions from \cite{rootoid1}--\cite{rootoid2}.  If not, are there conditions under which realizability is preserved by such constructions, either in general or  for  natural subclasses  such as  signed groupoid sets from Coxeter groups or Weyl or Coxeter groupoids. When realizability is preserved, one has  additional questions of whether  rationality properties are  preserved; if one starts with a crystallographic, in a suitable sense, realized root system for the original signed groupoid set, is there a
natural  crystallographic root system for the sigend groupoid set constructed from it.
\item Develop a general theory of signed groupoids sets enriched by compatible (possibly infinite) oriented matroid structures on their root systems at the various objects.  Use these in particular as a framework for an extended theory of (possibly infinite)   Weyl and Coxeter groupoids.  Study these in particular for (infinite) Coxeter groups with a view to constructing non-realizable examples.

\item Under what conditions are (real compressions of) hypercontractions of  signed groupoid sets from finite Coxeter groups (and from  Weyl or Coxeter groupoids) themselves signed groupoid sets from finite Weyl or Coxeter groupoids (in a suitably generalized sense as in (2)). Similarly for infinite Coxeter groups and groupoids.

\item A faithful, connected, simply connected, antipodal  signed groupoid set can be attached to any (possibly infinite)  oriented matroid, in a very  similar way as for  the case of finite oriented matroids
(see \ref{matroidisacycloid}). Develop a natural  characterization for the class of these signed groupoid sets (the finite ones are characterized by  \ref{herpreprin}). Does the subclass of those, all of whose weak orders are complete lattices, form  a reasonable generalization to infinite ground sets of the class of simplicial oriented  matroids? See \cite{orthogonalgroup} for an example.

\item Are  signed groupoid sets attached to  natural subclasses of finite oriented matroids  stable under (natural subclasses of) hypercontractions?  This paper proves that this holds for the class of  simplicial oriented matroids and all hypercontractions, but it  is open for the class of all finite oriented matroids.  Similarly  for other constructions and for signed groupod sets from infinite oriented matroids.

\item Study hereditarily preprincipal signed groupoids sets as a generalization of the class of signed groupoid sets attached to finite oriented matroids (which forms a subclass of hereditarily preprincipal ones  by \ref{herpreprin}).

\item The combinatorics of squares is enormously rich (see \cite{rootoid2} for some indications of this). However, not much is known about the  detailed combinatorics of squares  in special cases.  It is a natural problem  to classify or describe more concretely the squares (or more generally,  hypercubes: hypercubical diagrams all of the  two dimensional faces of which are squares) in various special settings;  for example, in signed groupoid sets from  symmetric groups, classical Weyl groups,  finite Coxeter groups and Coxeter groupoids, finite simplicial oriented matroids, finite  oriented matroids, general Coxeter groups, general oriented matroids etc.

\item We finish with a more concrete question related to (3) and (7). Suppose that
$R=(G,\Phi,\Phi^{+})$ is the (not  simply connected) signed groupoid set attached to a Coxeter group (or perhaps a Coxeter groupoid) $W$. Consider some hypercontraction $R\dslash(a,X)=(H,\Psi,\Psi^{+})$ and a self-composable simple morphism $s\colon a\to a$ of  $H$. Is  $s$ necessarily an involution (that is,  does it satisfy $s^{2}=1_{a}$)? This would be necessary for  $R\dslash(a,X)$ to come from a Coxeter  groupoid
(with the atomic morphisms of $H$ as its  simple morphisms).
 \end{enumerate}

\section{Acknowledgement}

This paper is based on part of the second author's dissertation under the supervision of the first author.

\end{document}